\newtheorem{theo}{Theorem}[section]
\newtheorem{lemma}[theo]{Lemma}
\newtheorem{cor}[theo]{Corollary}
\theoremstyle{definition}
\newtheorem*{rem}{Remark}
\newtheorem*{ex}{Example}
\newcommand{\N}{\mathbb{N}}
\newcommand{\R}{\mathbb{R}}
\newcommand{\eps}{\varepsilon}
\newcommand{\weakly}{\rightharpoonup}
\newcommand{\pr}{{\rm pr}}
\newcommand{\supp}{{\rm supp}}
\newcommand{\dist}{{\rm dist}}
\begin{document}

\begin{center}
\begin{Large}
On the infinite particle limit in Lagrangian dynamics and convergence of optimal transportation meshfree methods
\end{Large}
\end{center}

\begin{center}
\begin{large}
Bernd Schmidt\\[0.2cm]
\end{large}
\begin{small}
Institut f{\"u}r Mathematik,\\ 
Universit{\"a}t Augsburg\\ 
86135 Augsburg, Germany\\ 
{\tt bernd.schmidt@math.uni-augsburg.de}
\end{small}
\end{center}

\begin{abstract}
We consider Lagrangian systems in the limit of infinitely many particles. It is shown that the corresponding discrete action functionals Gamma-converge to a continuum action functional acting on probability measures of particle trajectories. Also the convergence of stationary points of the action is established. Minimizers of the limiting functional and, more generally, limiting distributions of stationary points are investigated and shown to be concentrated on orbits of the Euler-Lagrange flow. We also consider time discretized systems. These results in particular provide a convergence analysis for optimal transportation meshfree methods for the approximation of particle flows by finite discrete Lagrangian dynamics. 
\end{abstract}

\section{Introduction}

In classical Lagrangian mechanics a system of particles is described by an action functional on the particle trajectories. According to Hamilton's principle the dynamics of the system is given by stationary points of this functional. In continuum mechanics, on the other hand, a medium such as, e.g., a fluid is described by its mass density and fields for physical observables whose dynamics are governed by partial differential equations such as the velocity obeying, e.g., Euler's equation of motion. While at microscopic molecular length scales a fluid cannot be viewed as a homogeneous medium, these fields are assumed to describe material properties in a suitably mesoscopically averaged sense. The natural question therefore arises, if these different models can be related to each other. 

In order to address this question, one is naturally led to extend the set-up of Lagrangian mechanics to infinite dimensional systems and to devise action functionals acting on mass densities or, more generally and after normalization, probability measures that describe the mass distribution of the system. If with the help of a single particle Lagrangian a `Lagrangian cost function' is defined as the minimal value of the action necessary to move a particle from one point in space to another, a variational framework for such systems can be given within the theory of optimal transportation. (See, e.g., \cite{Villani:09} for a recent account on optimal transportation theory with Lagrangian costs.)

A connection between mass transportation problems and fluid mechanics has been derived in the seminal work \cite{BenamouBrenier:99,BenamouBrenier:00} of Benamou and Brenier (see also \cite{Villani:03}), who show that the Wasserstein distance between two probability densities can be expressed as a minimal action value with Lagrangian cost, when minimized over velocity fields constrained to satisfy the continuity equation ensuring conservation of mass. Departing from this relation, Li, Habbal and Ortiz have constructed an efficient Lagrangian meshfree approximation scheme for fluid flows by material point sampling and solving for discrete minimizers of an approximating finite dimensional Lagrangian system, see \cite{LiHabbalOrtiz}. An interesting aspect of this approach is that, due to their variational structure, the discrete approximating systems can be formulated within the theory of variational integrators and consequently possess good conservation properties of physically conserved quantities. See, e.g., \cite{MarsdenWest} and the references therein for a general introduction to the theory of variational integrators and \cite{SchmidtLeyendeckerOrtiz:09} for a convergence analysis on manifolds. As discussed in \cite{LiHabbalOrtiz}, conforming fields may be interpolated efficiently with max-ent shape functions as developed in \cite{ArroyoOrtiz}, for which convergence has been proved in the recent article \cite{BompadreSchmidtOrtiz}. 

The main aim of the present work is to provide a rigorous variational analysis of the infinite particle limit in Lagrangian mechanics. Such an analysis appears to be interesting from at least three different perspectives: 

Firstly our results supply a rigorous convergence analysis of the optimal transportation meshfree method constructed in \cite{LiHabbalOrtiz}. We refer to \cite{LiHabbalOrtiz} for an extensive description of this method, its comparison with alternative numerical integration schemes and numerical experiments. 

Secondly our approximation scheme also provides a novel method to infer characteristic properties of optimal transport maps and their displacement interpolation with Lagrangian costs. In particular we show that with the help of our discrete-to-continuum approximation scheme through $\Gamma$-convergence it is possible to re-derive a result of Bernard and Buffoni \cite{BernardBuffoni:07} characterizing dynamical optimal transference plans. 

Thirdly the problem is of some independent interest as it provides an effective theory derived by a rigorous variational discrete-to-continuum $\Gamma$-limit for a time dependent problem. Such problems have received a lot of attention over the last years in particular for static problems, see, e.g., the survey article \cite{BlancLeBrisLions} by Blanc, LeBris and Lions. It has to be noted, however, that the much more interesting problem for interacting particles is beyond the scope of this paper and deferred to future work. 

Having reviewed some basic material on Lagrangian systems for single and many particle systems in Section \ref{section:Lagrangian-systems}, in Section \ref{section:Limits-Gamma} we state and prove the main variational convergence results in the limit of infinitely many particles: The discrete action functionals $\Gamma$-converge to a continuum action functional acting on probability measures of particle trajectories and discrete action minimizers converge to action minimizers of the limiting continuum functional. In view of our numerical convergence results in Section \ref{section:Numerics} for systems as in \cite{LiHabbalOrtiz}, we note that our assumptions allow for maximal flexibility in sampling approximating marginal measures as we do not assume that these discrete measures are derived by some particular coarse graining procedure. 

As an application, Section \ref{section:Limits-Analysis} provides an analysis of the limiting continuum mass transportation problem with Lagrangian cost: Minimizers of the limiting functional and, more generally, limiting distributions of stationary points are investigated and shown to be concentrated on minimizing orbits of the Euler-Lagrange flow. Section \ref{section:Limits-Stat-points} then extends the convergence properties from action minimizers to general stationary points of the action. 

In Section \ref{section:Unbounded-energy}, the $\Gamma$-convergence result is extended to systems with unbounded potential energies for small time intervals. Under suitable conditions, convergence of stationary points of the action is proved even on long time intervals. 

Section \ref{section:Numerics} finally gives a rigorous convergence analysis of the aforementioned optimal transportation meshfree method, where the discrete trajectories are computed through a numerical quadrature formula (the basic midpoint rule). This is achieved by extending the previous results to a time discretized set-up and proving convergence when simultaneously the time step converges to $0$ and the number of particles diverges to $\infty$.

\section{Lagrangian systems}\label{section:Lagrangian-systems}

This section serves to collect some notation and well-known basics on Lagrangian systems. Proofs of these results can be found, e.g., in \cite{Fathi} or \cite{Villani:09}.

\subsection{Single particle systems}

Let $(M, g)$ be a connected complete Riemannian manifold of class $C^{\infty}$. By $TM = \bigcup_{x \in M} T_x M$, $T^*M = \bigcup_{x \in M} T_x^* M$, $\pi_M : TM \to M$ and $\pi_M^* : T^*M \to M$ we denote the tangent and cotangent bundle and their canonical projections onto $M$, respectively. We assume that there is a (time independent) Lagrangian $L : TM \to \R$, $(x, v) \mapsto L(x, v)$, satisfying the following set of classical conditions: 
\begin{enumerate}[(i)]
\item Smoothness: $L \in C^{\infty}(TM; \R)$, 
\item\label{item:strict-convexity} Strict convexity: For each compact set $K \subset M$ there exists some positive constant $c_0$ such that $\nabla^2_v L(x, \cdot) \ge c_0 g_x$ for all $x \in K$. 
\item Boundedness: There are constants $c_1, c_2 > 0$ such that $L(x, v) \ge c_1|v| - c_2$ for all $(x, v) \in TM$. 
\end{enumerate}
By \eqref{item:strict-convexity}, $L$ is uniformly strictly convex in $v$ uniformly on compact subsets of $M$ and so $\lim_{|v| \to \infty} \frac{L(x, v)}{|v|} = \infty$ for every $x \in M$ uniformly on compact subsets of $M$. Thus, in particular, $L$ is a Tonelli-Lagrangian (see, e.g., \cite{Fathi}). 

In fact, one could more generally also consider explicitly time dependent Lagrangians $L : [a, b] \times TM \to \R$. Under a suitable additional assumption (see below) the necessary modifications are straightforward so that we will not dwell on this point. On the other hand, Assumption (iii) is too restrictive for certain applications. For this reason we will in Section \ref{section:Unbounded-energy} also investigate systems which instead of (ii) and (iii) satisfy 
\begin{itemize}
\item[(ii')] Uniform strict convexity: There exists some positive constant $c_0$ such that $\nabla^2_v L(x, \cdot) \ge c_0 g_x$ for all $x \in M$. 
\item[(iii')] Growth condition: There are constants $c_1, c_2 > 0$ and a reference point $x_0 \in M$ such that $L(x, v) \ge c_1|v|^2 - c_2(1 + d_M^2(x, x_0))$ for all $(x, v) \in TM$, where $d_M$ denotes the (geodesic) distance on $M$. 
\end{itemize}
Throughout Sections \ref{section:Lagrangian-systems} and \ref{section:Limits}, however, Assumptions (i), (ii) and (iii) are assumed to hold. 

Fix an interval $[a, b] \subset \R$. To $\gamma \in C^{ac}([a, b]; M)$, i.e., absolutely continuous curves $\gamma : [a, b] \to M$, we associate the action 
$$ {\cal A}(\gamma) := \int_a^b L(\gamma(t), \dot{\gamma}(t)) \, dt. $$ 
By Assumption (iii) this integral exists in $(-\infty, +\infty]$. As it will be convenient in the sequel, we extend ${\cal A}$ to $C([a, b]; M)$ simply by setting ${\cal A}(\gamma) = + \infty$ if $\gamma : [a, b] \to M$ is continuous but not absolutely continuous. 

Critical points of this action functional with respect to variations that keep the endpoints fixed are called extremals. In particular, for every $x, y \in M$ there exists a minimizing extremal connecting $x$ and $y$, i.e., a minimizer of ${\cal A}$ among all absolutely continuous curves $\gamma : [a, b] \to M$ with $\gamma(a) = x$ and $\gamma(b) = y$. The value of the action of such a minimizing extremal will be denoted $c(x, y)$. Then $c : M \times M \to \R$ is continuous. By our hypotheses, minimizing extremals $\gamma$ are $C^{\infty}$ and satisfy the Euler-Lagrange equation 
$$ \frac{d}{dt} (\nabla_v L)(\gamma(t), \dot{\gamma}(t)) = (\nabla_x L)(\gamma(t), \dot{\gamma}(t)). $$
In fact, there exists a smooth flow $\phi^L_t$ on $TM$ such that every minimizing extremal $\gamma$ satisfies $(\gamma(t), \dot{\gamma}(t)) = \phi^L_{t-s}(\gamma(s), \dot{\gamma}(s))$ for $s, t \in [a, b]$, i.e., $\gamma$ lies on an orbit of $\phi^L_t$. 

\begin{rem}  
For Lagrangians explicitly depending on time, the existence of such a flow, which action minimizers follow, has to be assumed.
\end{rem}

Although we will state our results below in terms of the Euler-Lagrange flow $\phi^L_t$, we recall here that the dynamical evolution can alternatively be described through the associated Hamiltonian and its flow. The Legendre transform in $v$ of the Lagrangian $L$ defines the Hamiltonian $H \in C^{\infty}(T^*M; \R)$, 
$$ H(x, p) := \sup_{v \in T_x M} \left( p \cdot v - L(x, v) \right). $$
By strict convexity, the right hand side is maximized at $p = \nabla_v L(x, v)$ and one defines the global Legendre transform as the diffeomorphism $\nabla_v L : TM \to T^*M$, $(x, v) \mapsto (x, \nabla_v L(x, v))$. Via the canonical symplectic structure on $T^*M$, one associates a vector field $X_H$, which is uniquely determined by 
$$ X_H(x, p) = (\nabla_p H(x, p), - \nabla_x H(x, p)) $$ 
in local coordinates for $M$. The smooth flow associated to $X_H$ is denoted $\phi^H_t$. The global Legendre transform conjugates $\phi^L$ and $\phi^H$: 
$$ \phi^L_t 
   = (\nabla_v L)^{-1} \circ \phi^H_t \circ \nabla_v L. $$

\begin{ex}\label{ex:kin-pot} 
The most prominent example which we will also discuss from a numerical point of view in Section \ref{section:Numerics} is given by 
$$ L(x, v) = \frac{m}{2} |v|^2 - V(x) $$ 
for some constant $m > 0$ and potential $V \in C^{\infty}(\R^n; \R)$, which is bounded on $M = \R^n$. (See Section \ref{section:Unbounded-energy} for unbounded $V$.) The corresponding Euler-Lagrange equation is Newton's equation of motion 
$$ m \ddot{\gamma}(t) = - \nabla V(\gamma(t)). $$ 
\end{ex}

\subsection{Many particle systems}\label{subsection:Lagrangian-systems-many-body}

We proceed to give an elementary account on Lagrangian systems with finitely many particles. Suppose $L$ is a Lagrangian on the manifold $M$ as above. We consider a system of $N$ identical particles moving along curves $t \mapsto \gamma_i(t)$ on $M$ with initial positions $\gamma_i(a) = x_i$ (with $x_i \ne x_j$ for $i \ne j$) at time $t = a$ and final positions $\gamma_i(b) = y_i$, $i = 1, \ldots, N$. The associated Lagrangian action is 
$$ {\cal A}_N(\gamma) 
   := \frac{1}{N} \sum_{i = 1}^N {\cal A}(\gamma_i) $$ 
acting on continuous curves $\gamma = (\gamma_1, \ldots, \gamma_N) \in C([a, b]; M^N)$ and so, in particular, 
$$ {\cal A}_N(\gamma) 
   = \frac{1}{N} \sum_{i = 1}^N \int_a^b L(\gamma_i(t), \dot{\gamma}_i(t)) \, dt $$ 
for absolutely continuous curves $\gamma = (\gamma_1, \ldots, \gamma_N) \in C^{ac}([a, b]; M^N)$. The factor $\frac{1}{N}$ is introduced so as to measure the action per particle. It gives the right scaling in our convergence analysis to be discussed below. For the analysis of the discrete action functional with fixed $N$ it is of course irrelevant. 

Being interested in the behavior of the particle density, we define the probability measures 
\begin{align}\label{eq:mu-N}
  \mu^{(N)}_a := \frac{1}{N} \sum_{i = 1}^N \delta_{x_i} 
  \qquad\mbox{and}\qquad 
  \mu^{(N)}_b := \frac{1}{N} \sum_{i = 1}^N \delta_{y_i} 
\end{align}
and -- assuming that the particles are indistinguishable -- are led to minimizing the action functional ${\cal A}_N(\gamma)$ among all $\gamma \in C([a, b]; M^N)$ such that 
$$ \frac{1}{N} \sum_{i = 1}^N \delta_{\gamma_i(a)}  
   = \mu^{(N)}_a 
   \qquad\mbox{and}\qquad 
   \frac{1}{N} \sum_{i = 1}^N \delta_{\gamma_i(b)} 
   = \mu^{(N)}_b. $$
This in turn amounts to minimizing $\frac{1}{N} \sum_{i = 1}^N {\cal A}(\gamma_i)$ over the set 
\begin{align*}
  \Big\{ \gamma \in C([a, b]; M^N) : \,
  &\gamma_i(a) = x_i,~ \gamma_i(b) = T(x_i)~ \forall \, i \\ 
  &\quad\text{ for some } T \text{ with } T \# \mu_a^{(N)} = \mu_b^{(N)} \Big\}, 
\end{align*}
where $T \# \mu$ denotes the push forward of a measure $\mu$ under the mapping $T$. 

By first fixing $T$, we see that the optimal $\gamma_i$ need to be minimizing extremals connecting $x_i$ and $T(x_i)$. Moreover, the minimal value of the action is 
\begin{align*}
  &\min \left\{ \frac{1}{N} \sum_{i = 1}^N c(x_i, T(x_i)) 
  : T \# \mu_a^{(N)} = \mu_b^{(N)} \right\} \\ 
  &= \min \left\{ \int_M c(x, T(x)) \, \mu_a^{(N)}(dx) 
  : T \# \mu_a^{(N)} = \mu_b^{(N)} \right\}, 
\end{align*} 
where $c(x, y)$ is the minimal action of curves connecting $x$ and $y$ as defined above. This shows that, corresponding to our initial and final distributions, $T$ is a solution the Monge-Kantorovich mass transportation problem with Lagrangian cost $c$. Of course, this is a finite problem for $T$ and so an optimal $T$ always exists, showing that with $\gamma_i$ a minimizing extremal connecting $x_i$ and $T(x_i)$, $\gamma$ is in fact a minimizer for ${\cal A}_N$. 

If $\gamma = (\gamma_1, \ldots, \gamma_N)$ is a minimizer and so $\gamma_i$ a minimizing extremal for every $i$, we set $v_i := \dot{\gamma}_i(a)$. This way we can write 
$$ (\gamma_i(t), \dot{\gamma}_i(t)) 
   = \phi^L_{t-a}(x_i, v_i) $$ 
and thus 
$$ \gamma_i 
   = \pi_M \phi^L_{\cdot - a}(x_i, v_i). $$ 
Here $\pi_M \phi^L_{\cdot - a} : TM \to C([a, b]; M)$ is the (continuous) Euler-Lagrange flow mapping 
$$ (x, v) \mapsto (t \mapsto \pi_M \phi^L_{t-a}(x, v)). $$

\section{Convergence to the infinite particle system}\label{section:Limits}

In this section we first prove our main convergence result on the passage to a continuum system in the limit of infinitely many particles. We then analyze the limiting problem by deriving analogous properties to the finite system with the help of our convergence scheme. Finally we investigate convergence properties of stationary points.

\subsection{Variational convergence to a continuum system}\label{section:Limits-Gamma}

In order to pass to a continuum system, we consider the general action functional 
$$ \mathbb{A}(\pi) 
   = \int {\cal A}(\gamma) \, d\pi(\gamma) $$ 
acting on the space of Borel probability measures ${\cal P}(C([a, b]; M))$. Here the space of curves $C([a, b]; M)$ is endowed with the metric of uniform convergence 
$$ d_{\Gamma}(\gamma, \gamma') 
   = \sup_{t \in [a, b]} d_M(\gamma(t), \gamma'(t)), $$ 
where $d_M$ denotes the (geodesic) distance on $M$. $(C([a, b]; M), d_{\Gamma})$ then is a Polish space. 

Note that with the help of Lemmas \ref{lemma:A-coercive} and \ref{lemma:A-lsc} below the direct method of the calculus of variations can be applied to show existence of minimizers on weakly closed sets of probability measures. In this section we aim at proving a general $\Gamma$-convergence result which in addition shows that any minimizer can be approximated by almost minimizers of finite systems with pre-assigned initial and final distributions. This not only is the starting point for our numerical convergence analysis in Section \ref{section:Numerics}, but will also allow for an investigation of the minimizers of $\mathbb{A}$ by recourse to an analysis of the minimizers of ${\cal A}_N$. 

For any initial and final distribution $\mu_a$ and $\mu_b$, respectively, we define the restricted functionals $\mathbb{A}_{\mu_a, \mu_b}$ by 
$$ \mathbb{A}_{\mu_a, \mu_b}(\pi) 
   = \begin{cases} \mathbb{A}(\pi), & \mbox{if } \pr_a \# \pi = \mu_a \mbox{ and } \pr_b \# \pi = \mu_b, \\ 
     \infty & \mbox{otherwise.} 
     \end{cases} $$ 
Here $\pr_t : C([a, b]; M) \to M$ denotes the evaluation at time $t$: $\pr_t(\gamma) = \gamma(t)$. 

\begin{rem}  
If $\mu_a^{(N)}$ and $\mu_b^{(N)}$ are as in \eqref{eq:mu-N}, then any $\pi$ with $\mathbb{A}_{\mu_a^{(N)}, \mu_b^{(N)}}(\pi) < \infty$ satisfies $\pr_a \pi = \mu^{(N)}_a = \frac{1}{N} \sum_{i = 1}^N \delta_{x_i}$ with $x_i \ne x_j$ for $i \ne j$. The proof of Theorem \ref{theo:Gamma-convergence}(\ref{enum:limsup}) will show that all the results in this paper remain true if we impose the additional constraint that $\mathbb{A}_{\mu_a^{(N)}, \mu_b^{(N)}}(\pi) = \infty$ unless $\pi$ is of the form  
$$ \pi = \frac{1}{N} \sum_{i = 1}^N \delta_{\gamma_i} $$
for suitable curves $\gamma_i$ with $\gamma_i(a) = x_i$, so that any such $\pi$ satisfies 
$$ \mathbb{A}_{\mu_a^{(N)}, \mu_b^{(N)}}(\pi) = {\cal A}_N(\gamma_1, \ldots, \gamma_N). $$ 
\end{rem}

\subsubsection*{Statement of the main results}

Fix two compactly supported absolutely continuous probability measures $\mu_a$ and $\mu_b$ on $M$. Let $\mu_a^{(N)}, \mu_b^{(N)}$ be as in \eqref{eq:mu-N} such that there exists a compact set in $M$ supporting all these discrete measures. 

\begin{theo}\label{theo:Gamma-convergence} 
If $\mu_a^{(N)} \weakly \mu_a$ and $\mu_b^{(N)} \weakly \mu_b$, then $\mathbb{A}_{\mu_a^{(N)}, \mu_b^{(N)}}$ $\Gamma$-converges to $\mathbb{A}_{\mu_a, \mu_b}$ on ${\cal P}(C([a, b]; M))$ with respect to weak convergence, i.e.:
\begin{enumerate}[(i)]
\item\label{enum:liminf} liminf-inequality: Whenever $\pi^{(N)} \weakly \pi$ in ${\cal P}(C([a, b]; M))$, then 
$$ \liminf_{N \to \infty} \mathbb{A}_{\mu_a^{(N)}, \mu_b^{(N)}}(\pi^{(N)}) 
   \ge \mathbb{A}_{\mu_a, \mu_b}(\pi) $$ 
and 
\item\label{enum:limsup} recovery sequences: for any $\pi \in {\cal P}(C([a, b]; M))$ there exists a sequence $\pi^{(N)} \weakly \pi$ in ${\cal P}(C([a, b]; M))$ such that 
$$ \lim_{N \to \infty} \mathbb{A}_{\mu_a^{(N)}, \mu_b^{(N)}}(\pi^{(N)}) 
   = \mathbb{A}_{\mu_a, \mu_b}(\pi). $$ 
Moreover, $\pi^{(N)}$ can be chosen as $\pi^{(N)} = \frac{1}{N} \sum_{i = 1}^N \delta_{\gamma_i^{(N)}}$ for suitable curves $\gamma^{(N)}_i \in C^{ac}([a, b]; M)$. 
\end{enumerate} 
\end{theo}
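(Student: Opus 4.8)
The plan is to establish the two $\Gamma$-convergence inequalities separately, relying on the lower semicontinuity and coercivity of $\mathcal A$ (Lemmas \ref{lemma:A-lsc} and \ref{lemma:A-coercive}) together with the standard characterization of weak convergence of probability measures via the portmanteau theorem and Skorokhod-type representations. For the liminf-inequality in \eqref{enum:liminf}, I would first observe that $\mathcal A : C([a,b];M) \to (-\infty,+\infty]$ is lower semicontinuous and bounded below on the compact set of curves with values in a fixed compact $K \supset \supp \mu_a^{(N)}, \supp\mu_b^{(N)}$; hence it can be written as an increasing limit of bounded continuous functions $\mathcal A = \sup_k \mathcal A_k$ with $\mathcal A_k$ continuous. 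Since $\mathbb A(\pi) = \int \mathcal A\,d\pi$, weak convergence $\pi^{(N)} \weakly \pi$ gives $\int \mathcal A_k\,d\pi = \lim_N \int \mathcal A_k\,d\pi^{(N)} \le \liminf_N \int \mathcal A\,d\pi^{(N)} = \liminf_N \mathbb A(\pi^{(N)})$, and letting $k \to \infty$ via monotone convergence yields $\mathbb A(\pi) \le \liminf_N \mathbb A(\pi^{(N)})$. It then remains to check that the marginal constraints pass to the limit: since $\pr_a, \pr_b$ are continuous, $\pr_a \# \pi^{(N)} \weakly \pr_a \# \pi$, so if $\mathbb A_{\mu_a^{(N)},\mu_b^{(N)}}(\pi^{(N)})$ is finite along a subsequence realizing the liminf, then $\pr_a\#\pi^{(N)} = \mu_a^{(N)} \weakly \mu_a$ forces $\pr_a\#\pi = \mu_a$, and likewise at $b$; the case where no such finite subsequence exists is trivial.

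For the recovery sequence in \eqref{enum:limsup}, I may assume $\mathbb A_{\mu_a,\mu_b}(\pi) < \infty$ (otherwise any sequence with $\pi^{(N)} \weakly \pi$ works, e.g. empirical measures of samples from $\pi$). So $\pr_a\#\pi = \mu_a$, $\pr_b\#\pi = \mu_b$, and $\int \mathcal A\,d\pi < \infty$, whence $\pi$-a.e. curve is absolutely continuous with finite action. The construction proceeds in two stages. First, approximate $\pi$ by finitely supported measures: sample $\gamma_1, \dots, \gamma_N$ i.i.d. from $\pi$ so that $\frac1N\sum_i \delta_{\gamma_i} \weakly \pi$ and $\frac1N\sum_i \mathcal A(\gamma_i) \to \int \mathcal A\,d\pi$ (law of large numbers; one can also do this deterministically by a quantization argument, which is cleaner for the action convergence since $\mathcal A$ is only lsc — here I would use the coercivity Lemma to get uniform integrability of $\mathcal A(\gamma_i)$ along the approximation). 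Second — and this is the real work — correct the marginals: the sampled curves have endpoints $\gamma_i(a) \approx \mu_a$ but not exactly distributed as $\mu_a^{(N)} = \frac1N\sum\delta_{x_i}$. I would fix this by reparametrizing: since $\mu_a^{(N)} \weakly \mu_a$, there is a near-optimal matching (a bijection of indices, i.e. a transport map for the empirical measures) moving the sampled initial points onto the $x_i$ with small Wasserstein cost, and similarly a matching moving sampled final points onto the $y_i$; then prepend and append short connecting geodesic (or extremal) arcs — or better, slightly perturb the curves on small time-sub-intervals near $a$ and $b$ — whose additional action is controlled by the $W_1$-cost of the matchings via the continuity of the Lagrangian cost $c$ and Assumption (iii). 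Letting the matching error and the length of the correction intervals tend to zero together with $N \to \infty$ (a diagonal argument) produces curves $\gamma_i^{(N)}$ with $\frac1N\sum\delta_{\gamma_i^{(N)}(a)} = \mu_a^{(N)}$, $\frac1N\sum\delta_{\gamma_i^{(N)}(b)} = \mu_b^{(N)}$, $\pi^{(N)} := \frac1N\sum\delta_{\gamma_i^{(N)}} \weakly \pi$, and $\mathbb A_{\mu_a^{(N)},\mu_b^{(N)}}(\pi^{(N)}) = \frac1N\sum\mathcal A(\gamma_i^{(N)}) \to \mathbb A_{\mu_a,\mu_b}(\pi)$. Combined with the liminf-inequality, which gives the matching lower bound, this yields the limit.

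The main obstacle is the marginal correction step in the recovery sequence: one must simultaneously (a) keep the corrected measure weakly convergent to $\pi$, (b) make the endpoint marginals exactly equal to the prescribed discrete measures $\mu_a^{(N)}, \mu_b^{(N)}$ — which are fixed, not of our choosing — and (c) control the action increment. The delicacy is that $c(x,y)$ and hence the action is only finite, not uniformly bounded, on all of $M \times M$; the compact-support hypothesis on all the $\mu_a^{(N)}, \mu_b^{(N)}$ is exactly what rescues this, bounding $c$ on the relevant compact set. A secondary technical point is that since $\mathcal A$ is merely lower semicontinuous, convergence of the empirical actions $\frac1N \sum \mathcal A(\gamma_i) \to \int \mathcal A\,d\pi$ does not follow from weak convergence alone and must be arranged by the sampling/quantization construction itself (it is here that one genuinely uses that $\pi$ gives finite action and that the approximating curves can be taken as restrictions/discretizations of $\pi$-typical curves), and one should verify that the marginal-correction perturbations do not destroy this convergence — which is why the correction is designed to vanish in the limit.
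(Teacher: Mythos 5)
Your part (\ref{enum:liminf}) is essentially the paper's argument: lower semicontinuity of ${\cal A}$ (Lemma \ref{lemma:A-lsc}) plus the portmanteau theorem (your monotone approximation by bounded continuous functions is just one proof of it), and continuity of $\pr_a,\pr_b$ to pass the marginal constraints to the limit. One small slip: the lower bound you need is the global one ${\cal A}\ge -c_2(b-a)$ from Assumption (iii); finite-action curves do not stay in a compact set containing $\supp\mu_a^{(N)},\supp\mu_b^{(N)}$, nor is that relevant.

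The recovery sequence, however, has a genuine gap at precisely the step you call ``the real work''. You match the sampled initial points to the prescribed $x_i$ by a near-optimal $W_1$ matching and claim the added action of connectors on time intervals shrinking with $N$ is ``controlled by the $W_1$-cost via the continuity of the Lagrangian cost $c$ and Assumption (iii)''. Neither gives such control: Assumption (iii) is only a \emph{lower} bound on $L$, and $c(x,y)$ is the minimal action over the \emph{full} interval $[a,b]$, not over a short interval of length $\delta$. Displacing a particle by a distance $d$ within time $\delta$ costs of order $d^2/\delta$ already in the model case $L=\frac m2|v|^2-V$, and possibly far more since no upper growth bound on $L$ in $v$ is assumed; the cost is convex and superlinear in the displacement, so Jensen goes the wrong way and a small \emph{average} matching distance does not bound the average connector action. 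Moreover, individual displacements cannot be made uniformly small: the $\infty$-Wasserstein distance between the sampled empirical measure and $\mu_a^{(N)}$ need not tend to zero (e.g.\ $\mu_a^{(N)}$ may carry mass $1/N$ at a point far from $\supp\mu_a$), so some particles must be moved an order-one distance over your vanishing correction interval, with connector action blowing up. A related unaddressed point is that compressing or perturbing \emph{arbitrary} sampled curves near $a,b$ changes their action in a way that again cannot be controlled without an upper growth bound on $L$.

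The paper's construction is built to avoid exactly these issues: it covers a compact set of curves of $\pi$-measure $\ge 1-\eps$ by cells $V_i$ of diameter $\eps$ whose endpoint projections are $\mu_t$-continuity sets, so the portmanteau theorem supplies $N\pi(V_i)+o(N)$ of the prescribed points in $\pr_a(V_i)$ and $\pr_b(V_i)$; it matches only \emph{within} cells and reparametrizes a single near-optimal representative curve per cell, so all connector velocities are bounded uniformly in $\eps$ (distance $\le C\eps$ over a time interval of length $\eps$) and the extra action is $C\eps$ per curve, using only boundedness of $L$ on compact subsets of $TM$; the $o(N)$ leftover points are joined by curves of uniformly bounded action over the whole interval, which is where the common compact support and continuity of $c$ actually enter. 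If you repair your scheme by separating points moved by at most $\eps$ from the rest and keeping the correction intervals of fixed length $\eps$, you are in effect rebuilding this cell-based argument, so the fix is possible but the proposal as written does not close the estimate.
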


In view of our numerical convergence results in Section \ref{section:Numerics} for optimal transportation meshfree methods as developed in \cite{LiHabbalOrtiz}, we note that our assumptions in Theorem \ref{theo:Gamma-convergence} allow for maximal flexibility in sampling approximating marginal measures: With the exception of requiring a common compact support, the only assumption is that $\mu_a^{(N)} \weakly \mu_a$ and $\mu_b^{(N)} \weakly \mu_b$. In particular, it is not necessary to assume that these discrete measures are derived from the limiting continuum measures by some particular coarse graining procedure. 

This $\Gamma$-convergence theorem is complemented by the following compactness result. 

\begin{theo}\label{theo:compactness}
If $\pi^{(N)}$ is a sequence of probability measures such that the measures $\pr_a \# \pi^{(N)}$ are supported on a common compact set and the corresponding sequence $\mathbb{A}(\pi^{(N)})$ of actions is bounded, then $\pi^{(N)}$ is relatively compact with respect to weak convergence. 
\end{theo}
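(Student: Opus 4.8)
The plan is to apply Prokhorov's theorem, so the task reduces to establishing tightness of the family $\{\pi^{(N)}\}$ in $\mathcal{P}(C([a,b];M))$, together with the observation that $(C([a,b];M), d_\Gamma)$ is Polish (already noted in the text). For tightness one must produce, for each $\eta > 0$, a compact subset $\mathcal{K}_\eta \subset C([a,b];M)$ with $\pi^{(N)}(\mathcal{K}_\eta) \ge 1 - \eta$ for all $N$. The natural candidates are sets of equi-bounded, equi-H\"older (or equi-absolutely-continuous) curves, whose relative compactness in $C([a,b];M)$ follows from the Arzel\`a--Ascoli theorem once $M$ is known to be, locally, a complete metric space with the bounded-set structure supplied by completeness of $(M,g)$.

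The key estimates come from the hypotheses on $L$. First, since $\pr_a \# \pi^{(N)}$ is supported on a fixed compact set $K_0$ and $\mathbb{A}(\pi^{(N)}) = \int \mathcal{A}(\gamma)\, d\pi^{(N)}(\gamma)$ is bounded, I would use the growth bound (iii), $L(x,v) \ge c_1|v| - c_2$, to get
$$ \int \int_a^b |\dot\gamma(t)|\, dt\, d\pi^{(N)}(\gamma) \le \frac{1}{c_1}\Big( \mathbb{A}(\pi^{(N)}) + c_2(b-a) \Big) \le C $$
uniformly in $N$. By Markov's inequality, outside a set of $\pi^{(N)}$-measure at most $\eta$ the curves satisfy $\int_a^b |\dot\gamma(t)|\,dt \le C/\eta =: R_\eta$. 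Combined with the fixed starting point $\gamma(a) \in K_0$, this bounds $d_M(\gamma(t),\gamma(a)) \le R_\eta$ for all $t$, so these curves lie in the closed (hence compact, by completeness of $M$) ball $\bar B(K_0, R_\eta)$ at every time. This yields equi-boundedness; it also gives a uniform modulus of continuity of the weak form $d_M(\gamma(s),\gamma(t)) \le \int_s^t |\dot\gamma|\,d\tau$, but this alone is not an equicontinuity modulus. To upgrade it, I would need to rule out concentration of the velocity integral: here a second application of Markov lets me control $\pi^{(N)}(\{\gamma : \int_a^b |\dot\gamma| > R\})$, and then an absolute-continuity / Dunford--Pettis-type argument on the family of measures $t \mapsto |\dot\gamma(t)|\,dt$ — truncating $L$ and using that $L(x,v)/|v| \to \infty$ uniformly on compacts (stated in the text) — gives equi-integrability of the velocities on the good set, hence a common modulus of continuity $\omega_\eta$. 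The set $\mathcal{K}_\eta$ of curves starting in $K_0$, staying in $\bar B(K_0,R_\eta)$, and having modulus of continuity $\omega_\eta$ is then compact by Arzel\`a--Ascoli, and $\pi^{(N)}(\mathcal{K}_\eta) \ge 1 - \eta$ after adjusting constants over the finitely many exceptional sets.

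I expect the main obstacle to be precisely this equi-integrability step: passing from a uniform bound on $\int |\dot\gamma|$ (which only gives tightness of curves pointwise in time, not equicontinuity uniformly in $t$) to a genuine common modulus of continuity. The superlinear growth of $L$ in $v$ is exactly what is designed to close this gap — it forces the velocity measures to be uniformly integrable rather than merely bounded — but making the exchange of the superlinearity bound under the double integral $\int \int_a^b$ rigorous (one is controlling an average over $\pi^{(N)}$ of time-integrals of a superlinear function of the speed) is the delicate point. A clean way to package it is to note that $\psi(r) := \inf_{x \in \bar B(K_0, R)} \big( L(x, r\,e) - c_2' \big)/1$-type bounds make $r \mapsto \psi(r)$ superlinear, so $\int \int_a^b \psi(|\dot\gamma|)\, dt\, d\pi^{(N)} \le C$, and de la Vall\'ee-Poussin then delivers equi-integrability of $|\dot\gamma(t)|$ with respect to the product measure $dt \otimes \pi^{(N)}$; a Fubini/slicing argument extracts the per-curve modulus on a set of large $\pi^{(N)}$-measure. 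Once tightness is in hand, Prokhorov gives the asserted relative compactness immediately.
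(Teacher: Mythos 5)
Your overall plan --- Prohorov plus tightness, with tightness extracted from the coercivity of the action --- is exactly the paper's strategy, and your ingredients (linear lower bound (iii) to confine curves, superlinearity of $L$ in $v$ on compact sets to upgrade to equicontinuity, Arzel\`a--Ascoli) are the right ones. The gap is in the mechanism you propose for the equicontinuity step. Equi-integrability of $|\dot\gamma(t)|$ with respect to the product measure $dt\otimes\pi^{(N)}$ (de la Vall\'ee-Poussin) only controls $\int_{t_1}^{t_2}|\dot\gamma|\,dt$ \emph{in average over} $\gamma$; applying Markov then produces, for each time interval $(t_1,t_2)$, an exceptional set of curves, and these exceptional sets depend on the interval. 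A ``Fubini/slicing'' step does not directly deliver a single set of curves of $\pi^{(N)}$-measure $\ge 1-\eps$ all sharing one modulus of continuity --- you would have to sum exceptional sets over a countable family of scales with quantitative control, which your sketch does not provide. As written, this step would fail (or at least is not a proof).

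The repair is to apply the Chebyshev/Markov bound to the per-curve functional ${\cal A}(\gamma)$ itself rather than to the velocity field under the product measure: since ${\cal A}\ge -c_2(b-a)$ by (iii), the bound $\int{\cal A}\,d\pi^{(N)}\le C$ gives $\pi^{(N)}\{{\cal A}(\gamma)\ge c_\eps\}\le\eps$ with $c_\eps=(C+c_2(b-a))/\eps$. Then all the per-curve work is already done by Lemma \ref{lemma:A-coercive}: curves with $\gamma(a)$ in the common compact set and ${\cal A}(\gamma)\le c_\eps$ are confined (via (iii) and Hopf--Rinow) to a compact $K'\subset M$, on which $L(x,v)\ge c|v|^2-c'$, yielding the uniform estimate $d_M(\gamma(t_1),\gamma(t_2))\le C\sqrt{t_2-t_1}$ on the whole sublevel set --- no product-measure argument is needed, and the superlinearity is used curve by curve, which is exactly how your ``$\psi$'' idea should be deployed (Markov once on $\gamma\mapsto\int_a^b\psi(|\dot\gamma|)\,dt$, then a per-curve modulus from superlinearity). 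This is the proof of Theorem \ref{theo:compactness} in the paper; your first Markov step on $\int|\dot\gamma|\,dt$ becomes redundant once this is done. Minor point: compactness of closed bounded sets in $M$ is Hopf--Rinow, not completeness alone, though for a complete Riemannian manifold the conclusion you use is correct.
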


A standard argument in the theory of $\Gamma$-convergence now implies the convergence of minimizers.
\begin{cor}\label{cor:Minimizers-converge}
Under the assumptions of Theorem \ref{theo:Gamma-convergence}, 
$$ \lim_{N \to \infty} \min_{\pi} \mathbb{A}_{\mu_a^{(N)}, \mu_b^{(N)}}(\pi) 
   = \min_{\pi} \mathbb{A}_{\mu_a, \mu_b}(\pi). $$ 
Moreover, if $\pi^{(N)}$ is a sequence of almost minimizers of $\mathbb{A}_{\mu_a^{(N)}, \mu_b^{(N)}}$, i.e., 
$$ \lim_{N \to \infty} \mathbb{A}_{\mu_a^{(N)}, \mu_b^{(N)}}(\pi^{(N)}) = \min_{\pi}\mathbb{A}_{\mu_a, \mu_b}(\pi), $$ 
then $\pi^{(N)}$ admits a weakly convergent subsequence. In fact, any weakly convergent subsequence converges weakly to a minimizer of $\mathbb{A}_{\mu_a, \mu_b}$.
\end{cor}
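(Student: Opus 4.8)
The plan is to run the \emph{Fundamental Theorem of $\Gamma$-convergence}: combine the liminf inequality and recovery sequences of Theorem~\ref{theo:Gamma-convergence} with the equi-coercivity of Theorem~\ref{theo:compactness}. Write $m_N := \min_\pi \mathbb{A}_{\mu_a^{(N)},\mu_b^{(N)}}(\pi)$ and $m := \min_\pi \mathbb{A}_{\mu_a,\mu_b}(\pi)$. That both minima are attained is already available: for the finite systems this was noted in Section~\ref{subsection:Lagrangian-systems-many-body} (the minimization runs over finitely many transport maps), and for the limiting functional it follows from the direct method via Lemmas~\ref{lemma:A-coercive} and~\ref{lemma:A-lsc}, the constraint set $\{\pi : \pr_a\#\pi = \mu_a,\ \pr_b\#\pi = \mu_b\}$ being weakly closed since $\pi\mapsto\pr_t\#\pi$ is weakly continuous. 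I also record two a priori bounds. First, Assumption (iii) gives $L \ge -c_2$, hence $\mathbb{A}(\pi) \ge -c_2(b-a)$ for every $\pi$, so $m_N, m \ge -c_2(b-a)$. Second, for each $N$, choosing an arbitrary bijection $T$ between the (finite) supports of $\mu_a^{(N)}$ and $\mu_b^{(N)}$ together with the minimizing extremals joining $x_i$ to $T(x_i)$ produces a competitor of energy $\frac1N\sum_i c(x_i,T(x_i)) \le \max_{K\times K} c =: C_0 < \infty$, where $K$ is the common compact support; thus $m_N \le C_0$ for all $N$.

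To prove $\lim_{N\to\infty} m_N = m$ I argue in two steps. For the lower bound, pick a subsequence along which $m_{N_k} \to \liminf_N m_N$ (finite, by the bounds above) and let $\pi^{(N_k)}$ be minimizers of $\mathbb{A}_{\mu_a^{(N_k)},\mu_b^{(N_k)}}$; these have finite value, so they obey the marginal constraints and $\mathbb{A}(\pi^{(N_k)}) = m_{N_k}$ is bounded, while $\pr_a\#\pi^{(N_k)} = \mu_a^{(N_k)}$ is supported on $K$. By Theorem~\ref{theo:compactness} I may pass to a further subsequence with $\pi^{(N_k)} \weakly \pi$. The liminf inequality of Theorem~\ref{theo:Gamma-convergence}(\ref{enum:liminf}) then gives $\mathbb{A}_{\mu_a,\mu_b}(\pi) \le \liminf_k m_{N_k} = \liminf_N m_N$; in particular this is finite, $m < \infty$, and $m \le \mathbb{A}_{\mu_a,\mu_b}(\pi) \le \liminf_N m_N$. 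For the upper bound, let $\pi^*$ be a minimizer of $\mathbb{A}_{\mu_a,\mu_b}$ and let $\pi^{(N)}$ be a recovery sequence for $\pi^*$ from Theorem~\ref{theo:Gamma-convergence}(\ref{enum:limsup}); then $m_N \le \mathbb{A}_{\mu_a^{(N)},\mu_b^{(N)}}(\pi^{(N)}) \to \mathbb{A}_{\mu_a,\mu_b}(\pi^*) = m$, so $\limsup_N m_N \le m$. Combining, $\lim_N m_N = m$.

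Finally, let $\pi^{(N)}$ be almost minimizers, i.e.\ $\mathbb{A}_{\mu_a^{(N)},\mu_b^{(N)}}(\pi^{(N)}) \to m$. For $N$ large this quantity is finite, so $\pr_a\#\pi^{(N)} = \mu_a^{(N)}$ (supported on $K$), $\pr_b\#\pi^{(N)} = \mu_b^{(N)}$, and $\mathbb{A}(\pi^{(N)}) = \mathbb{A}_{\mu_a^{(N)},\mu_b^{(N)}}(\pi^{(N)})$ is a bounded sequence; Theorem~\ref{theo:compactness} then yields relative compactness of $(\pi^{(N)})$. If $\pi^{(N_k)} \weakly \pi$ along any subsequence, the liminf inequality gives $\mathbb{A}_{\mu_a,\mu_b}(\pi) \le \liminf_k \mathbb{A}_{\mu_a^{(N_k)},\mu_b^{(N_k)}}(\pi^{(N_k)}) = m$, and since $\mathbb{A}_{\mu_a,\mu_b} \ge m$ everywhere, equality holds and $\pi$ is a minimizer of $\mathbb{A}_{\mu_a,\mu_b}$.

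I do not expect a genuine difficulty here: this is the routine consequence of $\Gamma$-convergence together with equi-coercivity. The only point that needs a little care is checking that the limiting problem is well posed — that $m$ is finite and attained — \emph{before} invoking a recovery sequence for a minimizer; this is why the argument first records the uniform bound $m_N \le C_0$ via an explicit transport-map competitor and then extracts a finite-energy limit point through Theorem~\ref{theo:compactness}, rather than constructing such a $\pi$ by hand from an optimal transport plan between $\mu_a$ and $\mu_b$.
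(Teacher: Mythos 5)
Your proposal is correct and is precisely the ``standard argument in the theory of $\Gamma$-convergence'' that the paper invokes without detail: combine the liminf inequality and recovery sequences of Theorem \ref{theo:Gamma-convergence} with the equi-coercivity of Theorem \ref{theo:compactness}, using the explicit transport-map competitor and the lower bound $L \ge -c_2$ to control $m_N$, exactly as the paper intends. The only cosmetic remark is that attainment of the discrete minima is most cleanly justified by the same direct-method observation (Lemmas \ref{lemma:A-coercive} and \ref{lemma:A-lsc} plus weak closedness of the marginal constraints) rather than by the finite transport-map reduction of Section \ref{subsection:Lagrangian-systems-many-body}, but this does not affect the validity of your argument.
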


\subsubsection*{Proofs of Theorems \ref{theo:Gamma-convergence} and \ref{theo:compactness}}

In the proof of Theorem \ref{theo:Gamma-convergence} it will be advantageous to use a metric that metrizes weak convergence on ${\cal P}(C([a, b]; M))$. We therefore recall that weak convergence of probability measures (as in any Polish space) is equivalent to convergence in the bounded Lipschitz distance 
$$ d_{\rm BL}(\mu, \nu) 
   := \sup\left\{ \int \varphi \, d(\mu - \nu) : \| \varphi \|_{\rm Lip} \le 1 \right\}, $$ 
where for any Lipschitz continuous $\varphi : C([a, b]; M) \to \R$, 
$$ \| \varphi \|_{\rm Lip} 
   := \sup_{\gamma} |\varphi(\gamma)| + \sup_{\gamma \ne \tilde{\gamma}} \frac{|\varphi(\gamma) - \varphi(\tilde{\gamma})|}{d_{\Gamma}(\gamma, \tilde{\gamma})}. $$ 

We start by proving two (well known) preparatory lemmas on coercivity and lower semicontinuity.
\begin{lemma}\label{lemma:A-coercive}
For every compact set $K \subset M$ and any $C > 0$ the set 
$$ \{ \gamma \in C([a, b]; M) : \gamma(a) \in K,~ {\cal A}(\gamma) \le C\} $$ 
is relatively compact in $C([a, b]; M)$. 
\end{lemma}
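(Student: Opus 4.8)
The plan is to verify the two hypotheses of the Arzelà–Ascoli theorem for the set $S := \{ \gamma \in C([a,b];M) : \gamma(a) \in K,\ {\cal A}(\gamma) \le C \}$: first, that all its elements take values in one fixed compact subset of $M$, and second, that $S$ is uniformly equicontinuous. I note at the outset that, by the convention extending ${\cal A}$ by $+\infty$ to non-absolutely-continuous curves, every $\gamma \in S$ is absolutely continuous, so that the estimate $d_M(\gamma(s),\gamma(t)) \le \int_s^t |\dot\gamma|\,dr$ is available throughout.

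\emph{Confinement.} Assumption (iii) gives $c_1 \int_a^b |\dot\gamma|\,dt - c_2(b-a) \le {\cal A}(\gamma) \le C$, so the length of any $\gamma \in S$ is at most $\ell := c_1^{-1}(C + c_2(b-a))$, a bound independent of $\gamma$. Hence $d_M(\gamma(t),\gamma(a)) \le \ell$ for all $t$, and since $\gamma(a) \in K$ we obtain $\gamma([a,b]) \subseteq K' := \{ x \in M : \dist(x,K) \le \ell \}$. As $K$ is compact and $(M,g)$ is complete, $K'$ is compact by the Hopf–Rinow theorem.

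\emph{Uniform equicontinuity.} It suffices to show that $S$ is uniformly absolutely continuous, i.e.\ that $\sup_{\gamma \in S} \int_I |\dot\gamma|\,dt$ tends to $0$ as the length of the subinterval $I \subseteq [a,b]$ tends to $0$. Since $L$ is a Tonelli Lagrangian, $L(x,v)/|v| \to \infty$ as $|v| \to \infty$ uniformly for $x \in K'$; thus for every $A > 0$ there is $R_A > 0$ with $L(x,v) \ge A|v|$ whenever $x \in K'$ and $|v| \ge R_A$. Splitting $I$ according to whether $|\dot\gamma| \le R_A$ or $|\dot\gamma| > R_A$, the first part contributes at most $R_A |I|$ to $\int_I |\dot\gamma|\,dt$, while on the second part $|\dot\gamma| \le A^{-1} L(\gamma,\dot\gamma)$; using $L \ge -c_2$ to bound the integral of $L$ over any subinterval from below and ${\cal A}(\gamma) \le C$ from above, the second part contributes at most $A^{-1}\big(C + 2c_2(b-a)\big)$. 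Hence $\int_I |\dot\gamma|\,dt \le R_A |I| + A^{-1}(C + 2c_2(b-a))$ for all $\gamma \in S$; given $\eta > 0$, one first chooses $A$ large and then $|I|$ small to make the right-hand side $< \eta$.

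By Arzelà–Ascoli, the equicontinuous family $S$ of maps $[a,b] \to K'$ is relatively compact in $C([a,b];K') \subseteq C([a,b];M)$, since uniform limits of $K'$-valued curves remain $K'$-valued; this is the claim. I expect the confinement step and the appeal to Arzelà–Ascoli to be entirely routine; the only point requiring care is the equicontinuity estimate, where one must turn the superlinear growth of $L$ into the explicit inequality $L(x,v) \ge A|v|$ for large $|v|$, uniformly on the compact set $K'$, and keep track of the fact that $L$ is merely bounded below (not nonnegative), so that its integral over a subinterval is still controlled by the global action bound.
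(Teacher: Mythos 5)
Your proof is correct, and its skeleton (length bound from Assumption (iii) plus Hopf--Rinow to confine all curves to a compact set $K'$, then equicontinuity, then Arzel\`a--Ascoli) coincides with the paper's; the difference lies in how equicontinuity is extracted. The paper uses the uniform strict convexity on compacts to get a quadratic lower bound $L(x,v) \ge c|v|^2 - c'$ for $x \in K'$, so that the action bound controls $\int_a^b |\dot\gamma|^2\,dt$ and Cauchy--Schwarz yields the explicit H\"older modulus $d_M(\gamma(t_1),\gamma(t_2)) \le C\sqrt{t_2-t_1}$, uniform over the family. You instead invoke only the superlinearity $L(x,v)/|v| \to \infty$ uniformly on $K'$ (which the paper itself records as a consequence of (ii)) and run a de la Vall\'ee Poussin--type splitting of $\int_I |\dot\gamma|\,dt$ into the region $\{|\dot\gamma| \le R_A\}$ and its complement, bounding the latter by $A^{-1}$ times the action (corrected by the lower bound $L \ge -c_2$), which gives uniform absolute continuity and hence equicontinuity without any explicit modulus. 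The paper's route is shorter and quantitative; yours is marginally more general in spirit, since it would survive for any Tonelli Lagrangian with merely superlinear (not quadratically convex) growth on compacts, at the cost of a slightly longer estimate. Both arguments are complete, and your handling of the lower bound $L \ge -c_2$ when controlling $\int_{I} L$ over a subinterval is exactly the care the splitting argument requires.
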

Note that this notion of coerciveness is stronger than the mild coercivity assumption asking that the set of minimizing extremals between two compact sets in $M$ be compact in $C([a, b]; M)$. As this Lemma is crucial in our compactness results, we include the straightforward proof. 

\begin{proof} 
If $\gamma \in C([a, b]; M)$ satisfies ${\cal A}(\gamma) \le C$, then it is absolutely continuous with 
\begin{align*}
  C 
  \ge \int_a^b L(\gamma(t), \dot{\gamma}(t)) \, dt 
  \ge c_1 \int_a^b |\dot{\gamma}(t)| \, dt - c_2(b - a). 
\end{align*}
But then  (with the convention of denoting generic constants of different value by the same symbol $C$) 
$$ d_M(\gamma(t), \gamma(a)) \le C. $$
So by the Hopf-Rinow theorem there is a compact set $K' \subset M$ such that $\gamma(t) \in K'$ for all $t \in [a, b]$, whenever $\gamma(a) \in K$ and ${\cal A}(\gamma) \le C$.  

By our assumption on $L$ there are constants $c, c' > 0$ such that 
$$ L(x, v) \ge c|v|^2 - c' \qquad \forall \, x \in K',~ v \in T_x M. $$ 
So ${\cal A}(\gamma) \le C$ implies that for $a \le t_1 < t_2 \le b$ 
\begin{align*}
  d_M(\gamma(t_1), \gamma(t_2)) 
  \le \int_{t_1}^{t_2} |\dot{\gamma}(t)| \, dt 
  \le \left( \int_{t_1}^{t_2} |\dot{\gamma}(t)|^2 \, dt  \right)^{\frac{1}{2}} \left( t_2 - t_1 \right)^{\frac{1}{2}} 
  \le C \sqrt{t_2 - t_1}. 
\end{align*}

We have thus shown that $\{\gamma \in C([a, b]; M) : \gamma(a) \in K,~ {\cal A}(\gamma) \le C\}$ is pointwise compact and equicontinuous. The Arzela-Ascoli theorem then yields that this set is relatively compact. 
\end{proof}

\begin{lemma}\label{lemma:A-lsc} 
${\cal A}$ is lower semicontinuous on $(C([a, b]; M), d_{\Gamma})$. 
\end{lemma}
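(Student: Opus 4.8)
The plan is to reduce the statement to the classical lower semicontinuity theorem for one-dimensional integral functionals whose integrand is convex in the velocity variable and bounded below. So let $\gamma^{(k)} \to \gamma$ in $(C([a,b];M),d_{\Gamma})$; the goal is $\liminf_k {\cal A}(\gamma^{(k)}) \ge {\cal A}(\gamma)$. First I would discard the trivial case and assume $C := \liminf_k {\cal A}(\gamma^{(k)}) < \infty$, pass to a subsequence (not relabelled) along which ${\cal A}(\gamma^{(k)}) \to C$ with ${\cal A}(\gamma^{(k)}) \le C+1$ for all $k$, and invoke the argument in the proof of Lemma \ref{lemma:A-coercive}: since $\gamma^{(k)}(a) \to \gamma(a)$ stays in a compact set, all $\gamma^{(k)}$ — and the limit $\gamma$ — take values in a fixed compact set $K' \subset M$, and the bound $L(x,v) \ge c|v|^2 - c'$ valid on $K'$ gives a uniform bound $\int_a^b |\dot\gamma^{(k)}(t)|^2 \, dt \le C'$.

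Next I would make the passage to the limit local. Covering $K'$ by finitely many charts and using uniform continuity of the limit curve $\gamma$, I would choose a partition $a = t_0 < \dots < t_p = b$ so that each $\gamma([t_{l-1},t_l])$, and hence — since $\gamma^{(k)} \to \gamma$ uniformly — also each $\gamma^{(k)}([t_{l-1},t_l])$ for $k$ large, lies in a single chart domain inside a compact subset thereof. Read in coordinates, the contribution of $[t_{l-1},t_l]$ to ${\cal A}$ becomes an ordinary integral functional $u \mapsto \int_{t_{l-1}}^{t_l} \ell_l(u(t),\dot u(t)) \, dt$ with $\ell_l$ smooth, bounded below on the relevant compact set, and convex in the velocity argument — convexity survives the change of coordinates because it acts as a linear isomorphism on each tangent space $T_xM$, and $v \mapsto L(x,v)$ is convex by Assumption (ii). The $L^2$-bound on the derivatives together with uniform convergence forces the coordinate curves to converge weakly in $H^1$ on each subinterval, necessarily to $\gamma|_{[t_{l-1},t_l]}$; in particular $\gamma$ turns out to be absolutely continuous, so ${\cal A}(\gamma)$ is a genuine integral.

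Finally I would apply the classical Tonelli/Ioffe lower semicontinuity theorem on each subinterval, obtaining $\liminf_k \int_{t_{l-1}}^{t_l}\ell_l(\gamma^{(k)},\dot\gamma^{(k)}) \, dt \ge \int_{t_{l-1}}^{t_l}\ell_l(\gamma,\dot\gamma) \, dt$, and then sum over $l = 1,\dots,p$ using $\liminf(\alpha_k+\beta_k) \ge \liminf\alpha_k + \liminf\beta_k$ to conclude $\liminf_k {\cal A}(\gamma^{(k)}) \ge {\cal A}(\gamma)$ along the subsequence, hence (by the initial reduction) for the original sequence. A chart-free variant would be to fix a smooth proper embedding $M \hookrightarrow \R^d$, extend $L$ near the image of $K'$ to a smooth integrand on a tubular neighbourhood that is convex and coercive in the velocity variable, and quote the Euclidean lower semicontinuity theorem directly. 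I expect the only genuine obstacle to be this bookkeeping of the reduction to a Euclidean setting while keeping track of convexity and of the lower bound of the integrand; the lower semicontinuity of convex integral functionals itself is entirely standard, which is presumably why the lemma is labelled well known.
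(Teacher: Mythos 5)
Your proposal is correct and follows essentially the same route as the paper: reduce by splitting $[a,b]$ so that each piece lies in a coordinate chart, use the coercivity of $L$ to obtain a uniform $W^{1,2}$ bound, upgrade uniform convergence to weak $W^{1,2}$ convergence, and invoke classical lower semicontinuity for integrands convex in the velocity. The only difference is that you spell out a few steps the paper treats as implicit (passing to a subsequence achieving the liminf, noting that convexity in $v$ is preserved under the linear coordinate change on fibres, and the superadditivity of liminf when recombining subintervals), so no change of substance.
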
 

\begin{proof} 
As $\gamma_k \to \gamma$ implies that $\gamma_k|_{[t_1, t_2]} \to \gamma|_{[t_1, t_2]}$ uniformly on every subinterval $[t_1, t_2]$ with $a \le t_1 < t_2 \le b$, by splitting the action integral we may without loss of generality assume that all $\gamma_k$ and $\gamma$ are covered by a single coordinate chart. We may also assume that ${\cal A}(\gamma_k)$ is bounded. Then in local coordinates, 
\begin{align*}
  C 
  \ge \int_a^b L(\gamma(t), \dot{\gamma}(t)) \, dt 
  \ge c \int_a^b |\dot{\gamma}(t)|^2 \, dt - c' 
\end{align*}
for some constants $c, c' > 0$, so that $\gamma_k$ is bounded uniformly in $W^{1,2}((a, b); \R^n)$, where $n = \dim M$. But then $\gamma_k \weakly \gamma$ in $W^{1, 2}((a, b); \R^n)$ and by convexity of $L$ in $v$ we obtain 
\[
\liminf_{k \to \infty} \int_a^b L(\gamma_k(t), \dot{\gamma}_k(t)) \, dt 
   \ge \int_a^b L(\gamma(t), \dot{\gamma}(t)) \, dt. \qedhere \]
\end{proof}

To abbreviate we write $\mathbb{A}$ for $\mathbb{A}_{\mu_a, \mu_b}$ and $\mathbb{A}_N$ for $\mathbb{A}_{\mu_a^{(N)}, \mu_b^{(N)}}$ in the following proof. 
\begin{proof}[Proof of Theorem \ref{theo:Gamma-convergence}]  
\eqref{enum:liminf} In order to prove the $\liminf$-inequality, we assume that $\pi^{(N)} \weakly \pi$ and, without loss of generality, $\mathbb{A}_N(\pi^{(N)}) < \infty$, so that, in particular, $\pr_t \# \pi^{(N)} = \mu_t^{(N)}$ for $t \in \{a, b\}$ and all $N \in \N$. Thus, also $\pr_t \# \pi = \mu_t$ for $t \in \{a, b\}$. As ${\cal A}$ is lower semicontinuous by Lemma \ref{lemma:A-lsc}, the claim now follows from the portmanteau theorem: 
$$ \liminf_{N \to \infty} \int {\cal A}(\gamma) \, d\pi^{(N)}(\gamma) 
   \ge \int {\cal A}(\gamma) \, d\pi(\gamma). $$ 

\eqref{enum:limsup} It remains to construct a recovery sequence for given $\pi \in {\cal P}(C([a, b]; M))$. Note that in view of \eqref{enum:liminf}, it suffices to show that $\limsup_{N \to \infty} \mathbb{A}_{N}(\pi^{(N)}) \le \mathbb{A}(\pi)$. In fact, a diagonal sequence argument shows that it is enough to prove that, for any $\eps > 0$ there exists a sequence $\pi^{(N)}$ with 
$$ \limsup_{N \to \infty} d_{\rm BL}(\pi^{(N)}, \pi) \le O(\eps) 
   \quad\mbox{and}\quad 
   \limsup_{N \to \infty} \mathbb{A}_{N}(\pi^{(N)}) \le \mathbb{A}(\pi) + O(\eps). $$

Let $\eps > 0$ and choose a compact set $K \subset C([a, b]; M)$ with $\pi(K) \ge 1 - \eps$. Then choose disjoint measurable sets $V_1, \ldots, V_m$ of diameter at most $\eps$ covering $K$ such that $\mu_t(\partial \pr_t V_i) = 0$ for $i = 1, \ldots, m$ and $t \in \{a, b\}$. (Such sets can be constructed by first covering $K$ with $m$ balls $B_{\eps/4}(\tilde{\gamma}_i)$ of radius $\frac{\eps}{4}$, then choosing $\eps_i \in (\frac{\eps}{4}, \frac{\eps}{2})$ such that $\mu_a(\partial B_{\eps_i}(\tilde{\gamma}_i(a))) = \mu_b(\partial B_{\eps_i}(\tilde{\gamma}_i(b))) = 0$ and setting $V_1 = B_{\eps_1}(\tilde{\gamma}_1)$, $V_i = B_{\eps_i}(\tilde{\gamma}_i) \setminus (V_1 \cup \ldots \cup V_{i-1})$ for $i \ge 2$.) If necessary splitting the sets $V_i$, we may furthermore assume that, for $t \in \{a, b\}$, 
$$ \pr_t(V_i) \cap \pr_t(V_j) = \emptyset 
   \quad\mbox{or}\quad 
   \pr_t(V_i) = \pr_t(V_j) $$ 
for all $i, j$. Now choose $\gamma_i \in V_i$ with ${\cal A}(\gamma_i) < \inf_{\gamma \in V_i}{\cal A}(\gamma) + \eps$.  

As $\mu_t(\partial \pr_t V_i) = 0$ for $t \in \{a, b\}$, we have 
$$ \lim_{N \to \infty} \mu_t^{(N)}(\pr_t(V_i)) 
   = \mu_t(\pr_t(V_i)) $$ 
for $t \in \{a, b\}$, i.e., the number of points in $\{x_1, \ldots, x_N\}$ that lie in $\pr_a(V_i)$ as well as the number of points in $\{y_1, \ldots, y_N\}$ (counted with multiplicities) that lie in $\pr_b(V_i)$ is 
$$ N \mu_t^{(N)}(\pr_t(V_i)) 
   \ge N \!\!\sum_{j : \pr_t(V_j) \atop \quad = \pr_t(V_i)} \pi(V_j) + o(N) $$ 
for $t = a$ or $t = b$, respectively. So it is possible to find disjoint sets $W_i \subset \{1, \ldots, x_N\}$ and a mapping $T : \bigcup_{i = 1}^{m} W_i \to \{y_1, \ldots, y_N\}$, which is injective if $\{y_1, \ldots, y_N\}$ is viewed as a multiset, such that 
$$ W_i \subset \pr_a(V_i),\quad 
   T(W_i) \subset \pr_b(V_i) \quad\mbox{and}\quad 
   \# W_i = N \pi(V_i) + o(N). $$ 

Now for each $x \in W_i$ choose a curve $\gamma_x$ with ${\cal A}(\gamma_x) < {\cal A}(\gamma_i) + C\eps$ by setting 
$$ \gamma_x(t) = \gamma_i\left( \frac{(b-a)t - (a+b)\eps}{b-a-2\eps}\right) $$ 
for $t \in [a + \eps, b - \eps]$ and connecting $x$ to $\gamma_i(a)$ on $[a, a+\eps]$ and $\gamma_i(b)$ to $T(x)$ on $[b-\eps, b]$ on geodesics with velocities bounded uniformly in $\eps$. For $x \notin \bigcup_{i = 1}^m W_i$ choose $T(x) \in \{y_1, \ldots, y_N\}$ such that $T$ becomes a bijection when $\{y_1, \ldots, y_N\}$ is viewed as a multiset and let $\gamma_x$ be any curve connecting $x$ to $T(x)$ such that ${\cal A}(\gamma_x) \le C$ for some constant $C$. (This is possible since by assumption all initial and end points are contained in a common compact set.)

We define $\pi^{(N)}$ by 
$$ \pi^{(N)} = \frac{1}{N} \sum_{i = 1}^N \delta_{\gamma_{x_i}}. $$
Clearly, $\pr_t \#\pi^{(N)} = \mu_t^{(N)}$ for $t \in \{a, b\}$. In order to calculate $d_{\rm BL}(\pi^{(N)}, \pi)$, let $\varphi : C([a, b]; M) \to \R$ Lipschitz with $\| \varphi \|_{\rm Lip} \le 1$. Then, with $V_0 :=  C([a, b]; M) \setminus \bigcup_{i = 1}^m V_i$ and $W_0 = \{x_1, \ldots, x_N\} \setminus \bigcup_{i = 1}^m W_i$, 
\begin{align*}
  \int \varphi \, d(\pi^{(N)} - \pi) 
  &= \sum_{i = 0}^m \left( \frac{1}{N} \sum_{x \in W_i} \varphi(\gamma_x) - \int_{V_i} \varphi \, d\pi \right). 
\end{align*}
Now for $i = 0$ and sufficiently large $N$ we have 
\begin{align*}
  \left| \frac{1}{N} \sum_{x \in W_i} \varphi(\gamma_x) - \int_{V_i} \varphi \, d\pi \right| 
  \le \frac{\# W_0}{N} + \pi(V_0) 
  \le 2\pi(V_0) + \eps \le 3 \eps. 
\end{align*}
For $i \ge 1$ we obtain from $d_{\Gamma}(\gamma, \tilde{\gamma}) \le C \eps$ for any $\gamma, \tilde{\gamma} \in \{ \gamma_x : x \in W_i\} \cup V_i$ 
\begin{align*}
  \left| \frac{1}{N} \sum_{x \in W_i} \varphi(\gamma_x) - \int_{V_i} \varphi \, d\pi \right| 
  &\le \left| \frac{\# W_i}{N} \varphi(\gamma_i) - \pi(V_i) \varphi(\gamma_i) \right| 
    + C \eps \left( \frac{\#W_i}{N} + \pi(V_i) \right) \\
  &\le C \eps 
\end{align*}
for large $N$. Summing over all $i$ we have shown that indeed 
\begin{align*}
  \limsup_{N \to \infty} d_{\rm BL}(\pi^{(N)}, \pi) 
   \le C \eps. 
\end{align*} 

Finally we have to estimate the value of the action of $\pi^{(N)}$. 
\begin{align*}
  \mathbb{A}_N(\pi^{(N)}) 
  &= \int {\cal A}(\gamma) \, d \pi^{(N)} 
  = \sum_{i = 0}^m \frac{1}{N} \sum_{x \in W_i} {\cal A}(\gamma_x) \\ 
  &\le \frac{1}{N} \sum_{x \in W_0} {\cal A}(\gamma_x) 
    + \sum_{i = 1}^m \frac{1}{N} \sum_{x \in W_i} \left( {\cal A}(\gamma_i) + C \eps \right) \\ 
  &\le C \frac{\# W_0}{N} + \sum_{i = 1}^m \frac{\# W_i}{N} \left( \inf_{\gamma \in V_i} {\cal A}(\gamma) + C\eps \right) \\ 
  &\le o(1) + O(\eps) + \sum_{i = 1}^m (1 + o(1)) \pi(V_i)  \inf_{\gamma \in V_i} {\cal A}(\gamma) \\ 
  &\le \int {\cal A}(\gamma) \, d\pi + o(1) + O(\eps) 
  = \mathbb{A}(\pi) + o(1) + O(\eps). 
\end{align*}
This concludes the proof. 
\end{proof} 

We proceed to prove Theorem \ref{theo:compactness}. 

\begin{proof}[Proof of Theorem \ref{theo:compactness}]
Suppose $\mathbb{A}(\pi^{(N)}) = \int {\cal A}(\gamma) \, d \pi^{(N)} \le C$. By Prohorov's theorem it suffices to show that the sequence $\pi^{(N)}$ is tight. Let $\eps > 0$. As by Assumption (iii) ${\cal A}$ is bounded from below by $-c_2(b - a)$, we have 
$$ \pi^{(N)} \left( \left\{ \gamma \in C([a, b]; M) : {\cal A}(\gamma) \ge \frac{C + c_2(b-a)}{\eps} \right\} \right) 
   \le \eps, $$ 
for otherwise 
$$ \int {\cal A}(\gamma) \, d \pi^{(N)} 
   > \frac{C + c_2(b-a)}{\eps} \cdot \eps - c_2(b - a) 
   = C. $$
Since by assumption there is a compact set $K \subset M$ such that $\pi^{(N)}$-a.e.\ curve $\gamma$ satisfies $\gamma(a) \in K$, we deduce from Lemma \ref{lemma:A-coercive} that there exists a compact set $K_{\eps} \in C([a, b]; M)$ with 
$$ \pi^{(N)}(K_{\eps}) \ge 1 - \eps $$ 
for every $N$. This concludes the proof. 
\end{proof}

\subsection{Analysis of the limiting continuum system}\label{section:Limits-Analysis}

In the language of the theory of optimal transport, minimizers of $\mathbb{A}_{\mu_a,\mu_b}$ are dynamical optimal transference plans from $\mu_a$ to $\mu_b$ with respect to the Lagrangian cost $c$. These objects have been investigated intensively over the last years, see, e.g., \cite{Villani:09} for a recent account and, in particular, \cite{BernardBuffoni:07} for a result on compact manifolds analogous to Theorem \ref{theo:flow} below. 

For $K_1, K_2 \subset M$ we denote by $\Gamma^{\rm min}_{K_1, K_2}$ the set of minimizing extremals starting in $K_1$ and ending in $K_2$. Our main result on the behavior of minimizers of $\mathbb{A}$ is the following. 
\begin{theo}\label{theo:flow}
If $\pi$ is a minimizer for $\mathbb{A}_{\mu_a, \mu_b}$, then 
\begin{enumerate}[(i)] 
\item\label{item:pi-supp} $\pi$ is supported on $\Gamma^{\rm min}_{K_a, K_b}$ for $K_a = \supp(\mu_a)$, $K_b = \supp(\mu_b)$ and  

\item\label{item:flow} there exists a measure $\eta$ on $TM$ such that $\pi = \pi_M \phi^L_{\cdot - a} \# \eta$. 
\end{enumerate}
\end{theo}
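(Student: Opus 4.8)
\emph{Part (i).} Since $\pr_a \# \pi = \mu_a$ and $\pr_b \# \pi = \mu_b$, $\pi$-a.e.\ curve starts in $K_a = \supp(\mu_a)$ and ends in $K_b = \supp(\mu_b)$; and since $\mu_a, \mu_b$ have compact support the functional $\mathbb{A}_{\mu_a, \mu_b}$ is finite on at least one measure (couple the marginals and join the endpoints along minimizing extremals), so $\mathbb{A}(\pi) < \infty$ and in particular $\pi$ is concentrated on absolutely continuous curves. The plan is the standard competitor argument: replace each curve by a minimizing extremal with the same endpoints. To legitimize this I would first construct a Borel map $\sigma : C([a,b];M) \to C([a,b];M)$ with $\sigma(\gamma)(a) = \gamma(a)$, $\sigma(\gamma)(b) = \gamma(b)$ and $\mathcal{A}(\sigma(\gamma)) = c(\gamma(a), \gamma(b))$. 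For this, observe that the set-valued map $(x,y) \mapsto \Gamma^{\rm min}_{\{x\},\{y\}}$ has non-empty values, which by Lemma~\ref{lemma:A-coercive} are compact and locally uniformly contained in a fixed compact set, and has closed graph: if $(x_k, y_k, \gamma_k) \to (x, y, \gamma)$ with $\gamma_k$ minimizing from $x_k$ to $y_k$, then $\gamma(a) = x$, $\gamma(b) = y$, and $\mathcal{A}(\gamma) \le \liminf_k \mathcal{A}(\gamma_k) = \liminf_k c(x_k, y_k) = c(x,y)$ by Lemma~\ref{lemma:A-lsc} and continuity of $c$, hence $\gamma \in \Gamma^{\rm min}_{\{x\},\{y\}}$. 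Such a multifunction is upper semicontinuous, hence Borel measurable, and the Kuratowski--Ryll-Nardzewski selection theorem yields a Borel selection $s$; set $\sigma(\gamma) := s(\gamma(a), \gamma(b))$.

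Then $\tilde\pi := \sigma \# \pi$ again satisfies $\pr_a \# \tilde\pi = \mu_a$ and $\pr_b \# \tilde\pi = \mu_b$, so it is admissible for $\mathbb{A}_{\mu_a, \mu_b}$, and since $c(\gamma(a), \gamma(b)) \le \mathcal{A}(\gamma)$ for every $\gamma$,
\[
\mathbb{A}(\tilde\pi) = \int c(\gamma(a), \gamma(b)) \, d\pi(\gamma) \le \int \mathcal{A}(\gamma) \, d\pi(\gamma) = \mathbb{A}(\pi).
\]
If $\pi$ gave positive mass to the set of absolutely continuous curves that are \emph{not} minimizing extremals, then on that set $c(\gamma(a), \gamma(b)) < \mathcal{A}(\gamma)$ and the inequality above would be strict, contradicting minimality of $\pi$. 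Hence $\pi$ is concentrated on $\bigcup_{x,y} \Gamma^{\rm min}_{\{x\},\{y\}}$, and combined with the endpoint constraint on $\Gamma^{\rm min}_{K_a, K_b}$, which is (i).

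\emph{Part (ii).} Write $E := \pi_M \phi^L_{\cdot - a} : TM \to C([a,b];M)$ for the continuous Euler--Lagrange flow map. Every $\gamma \in \Gamma^{\rm min}_{K_a, K_b}$ satisfies $(\gamma(t), \dot\gamma(t)) = \phi^L_{t-a}(\gamma(a), \dot\gamma(a))$, i.e.\ $\gamma = E(\iota(\gamma))$ where $\iota(\gamma) := (\gamma(a), \dot\gamma(a))$. By Lemma~\ref{lemma:A-coercive} the set $\Gamma^{\rm min}_{K_a, K_b}$ is compact, the initial data $\iota(\Gamma^{\rm min}_{K_a, K_b})$ lie in a fixed compact subset of $TM$ (minimizing extremals are smooth flow orbits, so this follows from the action bound together with conservation of energy and superlinearity of $L$ in $v$), and $\iota$ is continuous on $\Gamma^{\rm min}_{K_a, K_b}$: if $\gamma_k \to \gamma$ there, then any subsequential limit $(x,v)$ of $\iota(\gamma_k)$ satisfies $E(x,v) = \lim_k E(\iota(\gamma_k)) = \lim_k \gamma_k = \gamma$, whence $x = \gamma(a)$ and $v = \dot\gamma(a)$, so $\iota(\gamma_k) \to \iota(\gamma)$. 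Thus $\iota$ is a Borel right inverse of $E$ on $\Gamma^{\rm min}_{K_a, K_b}$. Setting $\eta := \iota \# \pi$, a Borel probability measure on $TM$, and using that $\pi$ is concentrated on $\Gamma^{\rm min}_{K_a, K_b}$ by part (i), we obtain $E \# \eta = (E \circ \iota) \# \pi = \pi$, i.e.\ $\pi = \pi_M \phi^L_{\cdot - a} \# \eta$, which is (ii).

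\emph{Main obstacle.} The variational content is elementary: a single competitor comparison for (i) and a pushforward for (ii). The technical heart lies in the two measurability/continuity statements — the Borel selection of minimizing extremals with prescribed endpoints (requiring the closed-graph property of the associated multifunction together with a measurable selection theorem) and the continuity of $\gamma \mapsto (\gamma(a), \dot\gamma(a))$ on $\Gamma^{\rm min}_{K_a, K_b}$ (requiring smoothness of extremals, the energy bound, and continuous dependence on initial data) — and this is where I would expect to spend most of the effort.
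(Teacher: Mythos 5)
Your argument is correct, but it takes a genuinely different route from the paper. For part (\ref{item:pi-supp}) you run a direct continuum competitor argument: a Borel selection of minimizing extremals with prescribed endpoints (closed graph of $(x,y)\mapsto\Gamma^{\rm min}_{\{x\},\{y\}}$ plus Kuratowski--Ryll-Nardzewski) gives $\tilde\pi=\sigma\#\pi$ with ${\mathbb A}(\tilde\pi)=\int c(\gamma(a),\gamma(b))\,d\pi\le{\mathbb A}(\pi)$, strict on any mass of non-minimizing curves. The paper instead runs the minimizer through its own recovery sequence from Theorem \ref{theo:Gamma-convergence}(\ref{enum:limsup}) and uses the quantitative stability estimate of Lemma \ref{lemma:min-extr-quant}, ${\cal A}(\gamma)\ge c(x,y)+\omega(\dist(\gamma,\Gamma^{\rm min}_{K_a,K_b}))$, to conclude $\int\omega(\dist(\gamma,\Gamma^{\rm min}_{K_a,K_b}))\,d\pi=0$; this is a deliberate choice, since the point of Section \ref{section:Limits-Analysis} is to show that the discrete-to-continuum scheme itself re-derives the Bernard--Buffoni-type characterization. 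For part (\ref{item:flow}) the paper approximates $\pi$ by empirical measures supported on $\Gamma^{\rm min}_{K_a,K_b}$ (Lemma \ref{lemma:min-extr-approx}), lifts these to $TM$ with the uniform velocity bound of Lemma \ref{lemma:bounded-v}, and extracts a weakly convergent subsequence $\eta^{(N)}\weakly\eta$; you lift $\pi$ directly by proving continuity of $\iota(\gamma)=(\gamma(a),\dot\gamma(a))$ on the compact set $\Gamma^{\rm min}_{K_a,K_b}$ and setting $\eta=\iota\#\pi$, which is arguably cleaner (no subsequence, and $\eta$ is exhibited explicitly as a function of $\pi$), at the price of invoking measurable-selection and closed-graph machinery external to the paper, whereas the paper only reuses its Lemmas \ref{lemma:A-coercive}--\ref{lemma:min-extr-approx}. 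Two minor points to make explicit in your write-up: to upgrade ``$\pi$ is concentrated on'' to ``supported on'' $\Gamma^{\rm min}_{K_a,K_b}$ you should cite its closedness (Lemma \ref{lemma:Gamma-min-cpt}), and the compactness of $\iota(\Gamma^{\rm min}_{K_a,K_b})$ in $TM$ is exactly Lemma \ref{lemma:bounded-v}, so no separate energy-conservation argument is needed.
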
 

We begin with the following (well known) preparation: 
\begin{lemma}\label{lemma:Gamma-min-cpt}
If $K_1, K_2 \subset M$ are compact, then $\Gamma^{\rm min}_{K_1, K_2}$ is compact in $C([a, b]; M)$. 
\end{lemma}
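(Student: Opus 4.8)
The plan is to combine the coercivity estimate of Lemma~\ref{lemma:A-coercive} with the lower semicontinuity of Lemma~\ref{lemma:A-lsc} and the continuity of the cost function $c$, showing that $\Gamma^{\rm min}_{K_1, K_2}$ is a closed subset of a relatively compact set.

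First I would establish relative compactness. Every $\gamma \in \Gamma^{\rm min}_{K_1, K_2}$ is by definition a minimizer of ${\cal A}$ among absolutely continuous curves joining its endpoints $\gamma(a) \in K_1$ and $\gamma(b) \in K_2$, so that ${\cal A}(\gamma) = c(\gamma(a), \gamma(b))$. Since $c : M \times M \to \R$ is continuous and $K_1 \times K_2$ is compact, $C := \max_{(x,y) \in K_1 \times K_2} c(x,y) < \infty$, whence ${\cal A}(\gamma) \le C$ for all $\gamma \in \Gamma^{\rm min}_{K_1, K_2}$. As moreover $\gamma(a) \in K_1$ with $K_1$ compact, Lemma~\ref{lemma:A-coercive} applies directly and yields that $\Gamma^{\rm min}_{K_1, K_2}$ is relatively compact in $C([a,b]; M)$.

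Next I would check that $\Gamma^{\rm min}_{K_1, K_2}$ is closed. Let $\gamma_k \in \Gamma^{\rm min}_{K_1, K_2}$ with $\gamma_k \to \gamma$ in $(C([a,b]; M), d_\Gamma)$. Since $K_1$ and $K_2$ are closed, $\gamma(a) \in K_1$ and $\gamma(b) \in K_2$. Using Lemma~\ref{lemma:A-lsc} together with the continuity of $c$,
$$ {\cal A}(\gamma) \le \liminf_{k \to \infty} {\cal A}(\gamma_k) = \liminf_{k \to \infty} c(\gamma_k(a), \gamma_k(b)) = c(\gamma(a), \gamma(b)). $$
In particular ${\cal A}(\gamma) < \infty$, so $\gamma$ is absolutely continuous, and by the very definition of $c$ one also has ${\cal A}(\gamma) \ge c(\gamma(a), \gamma(b))$. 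Hence ${\cal A}(\gamma) = c(\gamma(a), \gamma(b))$, i.e.\ $\gamma$ is a minimizing extremal from $\gamma(a) \in K_1$ to $\gamma(b) \in K_2$, so $\gamma \in \Gamma^{\rm min}_{K_1, K_2}$.

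Finally, a closed subset of a relatively compact subset of a metric space is itself compact, which gives the claim. I do not anticipate any genuine obstacle here; the only point requiring slight care is to confirm that the uniform limit $\gamma$ is again a bona fide (absolutely continuous) minimizing extremal rather than merely a continuous curve — but this follows automatically once the lower-semicontinuity bound shows that ${\cal A}(\gamma)$ is finite and equals $c(\gamma(a), \gamma(b))$.
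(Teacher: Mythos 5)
Your proof is correct, and the first half (relative compactness via the boundedness of $c$ on $K_1 \times K_2$ and Lemma~\ref{lemma:A-coercive}) coincides with the paper's reduction. Where you genuinely diverge is in the closedness step: the paper argues by contradiction, assuming the uniform limit $\gamma$ is not a minimizing extremal, taking a strictly better competitor $\tilde{\gamma}$ with the same endpoints, and then gluing a time-rescaled copy of $\tilde{\gamma}$ to short connecting arcs from $\gamma_k(a)$ and $\gamma_k(b)$ to build curves $\gamma_{k,\eps}$ with ${\cal A}(\gamma_{k,\eps}) \le \frac{{\cal A}(\tilde{\gamma}) + {\cal A}(\gamma)}{2}$, contradicting the minimality of $\gamma_k$. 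You instead use the continuity of $c$ directly: lower semicontinuity of ${\cal A}$ gives ${\cal A}(\gamma) \le \lim_k c(\gamma_k(a), \gamma_k(b)) = c(\gamma(a), \gamma(b))$, while the definition of $c$ gives the reverse inequality (finiteness of ${\cal A}(\gamma)$ guaranteeing absolute continuity, since ${\cal A}$ is $+\infty$ otherwise), so $\gamma$ is itself a minimizing extremal. Your route is shorter and is in fact the same mechanism the paper uses right afterwards in the proof of Lemma~\ref{lemma:min-extr-quant}; its only extra input is the continuity of $c$, which the paper has already recorded in Section~\ref{section:Lagrangian-systems}. The paper's gluing argument, by contrast, is self-contained at this point — it only needs lower semicontinuity and the ability to connect nearby endpoints at small cost — which is essentially the ingredient underlying the continuity of $c$ in the first place. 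Both proofs are valid; yours trades the explicit construction for a citation of a stated property of $c$.
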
 

\begin{proof} 
As the continuous function $c$ is bounded on $K_1 \times K_2$, by Lemma \ref{lemma:A-coercive} it suffices to show that $\Gamma^{\rm min}_{K_1, K_2}$ is closed. Suppose $\Gamma^{\rm min}_{K_1, K_2} \ni \gamma_k \to \gamma$. Then $\gamma(a) \in K_1$ and $\gamma(b) \in K_2$. If $\gamma$ were not a minimizing extremal, there would be $\tilde{\gamma}$ with $\tilde{\gamma}(a) = \gamma(a)$, $\tilde{\gamma}(b) = \gamma(b)$ and, by Lemma \ref{lemma:A-lsc}, 
$$ \liminf_{k \to \infty} {\cal A}(\gamma_k) \ge {\cal A}(\gamma) > {\cal A}(\tilde{\gamma}). $$
But then there exists an $\eps > 0$ such that for all $k$ sufficiently large one can construct a curve $\gamma_{k,\eps}$ by setting 
$$ \gamma_{k,\eps}(t) = \tilde{\gamma}\left( \frac{(b-a)t - (a+b)\eps}{b-a-2\eps}\right) $$ 
for $t \in [a + \eps, b - \eps]$ and connecting $\gamma_k(a)$ to $\tilde{\gamma}(a)$ on $[a, a+\eps]$ and $\tilde{\gamma}(b)$ to $\gamma_k(b)$ on $[b-\eps, b]$ suitably, so that ${\cal A}(\gamma_{k, \eps}) \le \frac{{\cal A}(\tilde{\gamma}) + {\cal A}(\gamma)}{2}$, contradicting the minimality of $\gamma_k$. 
\end{proof}  

\begin{lemma}\label{lemma:min-extr-quant}
Let $K_1, K_2 \subset M$ be compact. Then there exists $\omega : [0, \infty) \to \R$ continuous with $\omega(0) = 0$ and $\omega(s) > 0$ for $s > 0$ such that 
$$ {\cal A}(\gamma) \ge c(x, y) + \omega(\dist(\gamma, \Gamma^{\rm min}_{K_1, K_2})) $$ 
for every $\gamma \in C([a, b]; M)$ with $\gamma(a) = x \in K_1$ and $\gamma(b) = y \in K_2$. 
\end{lemma}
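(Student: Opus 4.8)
The idea is to read off $\omega$ from the optimal ``excess action'' profile and then smooth it. Define, for $s \ge 0$,
$$ \omega_0(s) := \inf\Big\{ {\cal A}(\gamma) - c(\gamma(a), \gamma(b)) \ :\ \gamma \in C([a, b]; M),\ \gamma(a) \in K_1,\ \gamma(b) \in K_2,\ \dist(\gamma, \Gamma^{\rm min}_{K_1, K_2}) \ge s \Big\}. $$
Since $c(x,y)$ is, by definition, the minimal action among curves joining $x$ and $y$, we have $\omega_0 \ge 0$; taking any element of $\Gamma^{\rm min}_{K_1, K_2}$ (nonempty as minimizing extremals always exist) gives $\omega_0(0) = 0$; and $\omega_0$ is non-decreasing because the admissible set shrinks as $s$ grows. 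By construction, ${\cal A}(\gamma) \ge c(x,y) + \omega_0(\dist(\gamma, \Gamma^{\rm min}_{K_1,K_2}))$ for every admissible $\gamma$, so it remains to prove that $\omega_0(s) > 0$ for $s > 0$ and then to replace $\omega_0$ by a continuous minorant with the same vanishing behavior.

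\textbf{Strict positivity by compactness.} Suppose $\omega_0(s_0) = 0$ for some $s_0 > 0$, and pick admissible curves $\gamma_k$ at level $s_0$ with ${\cal A}(\gamma_k) - c(\gamma_k(a), \gamma_k(b)) < 1/k$. Since $c$ is continuous, it is bounded on the compact set $K_1 \times K_2$, so the actions ${\cal A}(\gamma_k)$ are bounded; as $\gamma_k(a) \in K_1$, Lemma \ref{lemma:A-coercive} yields a subsequence (not relabeled) with $\gamma_k \to \gamma$ in $C([a,b];M)$. Then $\gamma(a) \in K_1$ and $\gamma(b) \in K_2$ ($K_1, K_2$ being closed), and $c(\gamma_k(a), \gamma_k(b)) \to c(\gamma(a), \gamma(b))$ by continuity of $c$, so that ${\cal A}(\gamma_k) \to c(\gamma(a), \gamma(b))$. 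By lower semicontinuity (Lemma \ref{lemma:A-lsc}), ${\cal A}(\gamma) \le \liminf_k {\cal A}(\gamma_k) = c(\gamma(a), \gamma(b))$, while ${\cal A}(\gamma) \ge c(\gamma(a), \gamma(b))$ always holds; hence $\gamma$ is a minimizing extremal with endpoints in $K_1, K_2$, i.e. $\gamma \in \Gamma^{\rm min}_{K_1, K_2}$. But then $\dist(\gamma_k, \Gamma^{\rm min}_{K_1, K_2}) \le d_\Gamma(\gamma_k, \gamma) \to 0$, contradicting $\dist(\gamma_k, \Gamma^{\rm min}_{K_1, K_2}) \ge s_0 > 0$. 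Thus $\omega_0(s) > 0$ for all $s > 0$.

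\textbf{Passing to a continuous modulus.} Set $\omega(s) := \inf_{t \ge 0} \big( \omega_0(t) + |s - t| \big)$. This function is $1$-Lipschitz, hence continuous; it satisfies $0 \le \omega \le \omega_0$ (take $t = s$) and $\omega(0) = 0$ (take $t = 0$, using $\omega_0(0) = 0$). If $\omega(s_0) = 0$ for some $s_0 > 0$, a near-optimal sequence $t_n$ would satisfy $\omega_0(t_n) + |s_0 - t_n| \to 0$, forcing $t_n \to s_0$ and $\omega_0(t_n) \to 0$; but eventually $t_n \ge s_0/2$, so $\omega_0(t_n) \ge \omega_0(s_0/2) > 0$ by monotonicity and the previous step --- a contradiction. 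Hence $\omega(s) > 0$ for $s > 0$, and ${\cal A}(\gamma) \ge c(x,y) + \omega_0(\dist(\gamma, \Gamma^{\rm min}_{K_1,K_2})) \ge c(x,y) + \omega(\dist(\gamma, \Gamma^{\rm min}_{K_1,K_2}))$, as required.

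\textbf{Main obstacle.} The only genuine point is the compactness argument in the second step: one must ensure Lemma \ref{lemma:A-coercive} applies (the initial points lie in the fixed compact set $K_1$) and that the limit curve is \emph{exactly} a minimizing extremal between $K_1$ and $K_2$, which is where closedness of $K_1, K_2$, continuity of $c$, and lower semicontinuity of ${\cal A}$ combine. The regularization of $\omega_0$ into a continuous $\omega$ is routine but must be carried out, since $\omega_0$ itself need not be continuous.
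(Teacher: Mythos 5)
Your proof is correct and follows essentially the same route as the paper: the heart of the argument is the identical compactness--lower semicontinuity contradiction (bounded actions via continuity of $c$ on $K_1 \times K_2$, Lemma \ref{lemma:A-coercive}, Lemma \ref{lemma:A-lsc}, limit curve in $\Gamma^{\rm min}_{K_1,K_2}$ contradicting $\dist \ge s_0$). The only difference is that the paper reduces the statement to an $\eps$--$\delta$ implication and leaves the construction of a continuous $\omega$ implicit, whereas you carry out this routine step explicitly via the monotone excess-action profile $\omega_0$ and its Lipschitz inf-convolution, which is a legitimate and slightly more complete presentation.
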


\begin{proof} 
It suffices to show that for every $\eps > 0$ there exists $\delta > 0$ such that 
$$ \dist(\gamma, \Gamma^{\rm min}_{K_1, K_2}) \ge \eps \Longrightarrow {\cal A}(\gamma) - c(x, y) \ge \delta $$
for every $\gamma \in C([a, b]; M)$ with $\gamma(a) = x \in K_1$ and $\gamma(b) = y \in K_2$. 

If this were not the case, then there would be an $\eps > 0$ and sequence $\gamma^{(k)}$ with $\gamma^{(k)}(a) = x_k \in K_1$, $\gamma^{(k)}(b) = y_k \in K_2$, $\dist(\gamma^{(k)}, \Gamma^{\rm min}_{K_1, K_2}) \ge \eps$ and ${\cal A}(\gamma^{(k)}) - c(x_k, y_k) \to 0$. $c$ is continuous and, in particular, bounded on $K_1 \times K_2$. So ${\cal A}(\gamma^{(k)})$ is bounded and, by Lemma \ref{lemma:A-coercive}, thus has a convergent subsequence (not relabeled) $\gamma^{(k)} \to \gamma$ with $\gamma(a) = x := \lim_{k\to\infty} x_k \in K_1$ and $\gamma(b) = y := \lim_{k\to\infty} y_k \in K_2$. 

Now the lower semicontinuity of ${\cal A}$ implies that 
$$ {\cal A}(\gamma) 
   \le \liminf_{k \to \infty} {\cal A}(\gamma^{(k)}) 
   = \lim_{k \to \infty} c(x_k, y_k) 
   = c(x, y), $$ 
which shows that $\gamma$ is a minimizing extremal connecting $x$ and $y$. This contradicts the fact that 
\[ \dist(\gamma, \Gamma^{\rm min}_{K_1, K_2}) 
   = \lim_{k \to \infty} \dist(\gamma^{(k)}, \Gamma^{\rm min}_{K_1, K_2}) 
   \ge \eps. \qedhere \] 
\end{proof}  

\begin{proof}[Proof of Theorem \ref{theo:flow}(i)]
We again write $\mathbb{A}$ for $\mathbb{A}_{\mu_a, \mu_b}$ and $\mathbb{A}_N$ for $\mathbb{A}_{\mu_a^{(N)}, \mu_b^{(N)}}$. 
Let $\pi$ be a minimizer of $\mathbb{A}$. For each $N \in \N$ choose point sets $\{x_1, \ldots, x_N\} \subset K_a$ and $\{y_1, \ldots, y_N\} \subset K_b$ such that 
$$ \mu_a^{(N)} = \frac{1}{N} \sum_{i = 1}^N \delta_{x_i} \weakly \mu_a 
   \quad \mbox{and} \quad 
   \mu_b^{(N)} = \frac{1}{N} \sum_{i = 1}^N \delta_{y_i} \weakly \mu_b. $$ 
Suppose $\pi^{(N)} = \frac{1}{N} \sum_{i = 1}^N \delta_{\gamma_i}$ with $\gamma_i(a) = x_i$ and $\gamma_i(b) = T(x_i)$ is a recovery sequence for $\pi$ with respect to the $\Gamma$-convergence of $\mathbb{A}_N$ to $\mathbb{A}$. With $\omega$ as in Lemma \ref{lemma:min-extr-quant} for $K_1 = K_a$ and $K_2 = K_b$ we obtain (cf.\ Section \ref{subsection:Lagrangian-systems-many-body} for the second inequality)  
\begin{align*} 
  \mathbb{A}(\pi) 
  &= \lim_{N \to \infty} \mathbb{A}_N(\pi^{(N)}) 
   = \lim_{N \to \infty} \frac{1}{N} \sum_{i = 1}^N {\cal A}(\gamma_i) \\ 
  &\ge \frac{1}{N} \limsup_{N \to \infty} \sum_{i = 1}^N \big( c(x_i, T(x_i)) 
     + \omega(\dist(\gamma_i, \Gamma^{\rm min}_{K_a, K_b})) \big) \\ 
  &\ge \lim_{N \to \infty} \min_{\tilde{\pi}} \mathbb{A}_N(\tilde{\pi}) 
     + \lim_{N \to \infty} \int \omega(\dist(\gamma, \Gamma^{\rm min}_{K_a, K_b})) \, d \pi^{(N)}(\gamma) \\ 
  &= \min_{\tilde{\pi}} \mathbb{A}(\tilde{\pi}) + \int \omega(\dist(\gamma, \Gamma^{\rm min}_{K_a, K_b})) \, d \pi(\gamma). 
\end{align*}
Thus, $\dist(\gamma, \Gamma^{\rm min}_{K_a, K_b}) = 0$ $\pi$-a.e.\ and the claim follows. 
\end{proof}

For the proof of Theorem \ref{theo:flow}\eqref{item:flow} we will need some finer estimates on recovery sequences. 
\begin{lemma}\label{lemma:bounded-v}
For any two compact sets $K_1, K_2 \subset M$ there exists a constant $C > 0$ such that $|\dot{\gamma}(t)| \le C$ for all $\gamma \in \Gamma^{\rm min}_{K_1, K_2}$ and $t \in [a, b]$.
\end{lemma}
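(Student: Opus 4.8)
The plan is to prove Lemma \ref{lemma:bounded-v} by extracting a uniform velocity bound for minimizing extremals from the compactness already established in Lemma \ref{lemma:Gamma-min-cpt} together with the Euler--Lagrange flow structure. First I would invoke Lemma \ref{lemma:Gamma-min-cpt} to conclude that $\Gamma^{\rm min}_{K_1, K_2}$ is a compact subset of $C([a,b];M)$; in particular all these curves take values in a single compact set $K' \subset M$. Next, for each $\gamma \in \Gamma^{\rm min}_{K_1, K_2}$ we know that $\gamma$ is a $C^\infty$ minimizing extremal, hence satisfies $(\gamma(t), \dot\gamma(t)) = \phi^L_{t-a}(\gamma(a), \dot\gamma(a))$, so the whole curve is determined by the initial datum $(\gamma(a), \dot\gamma(a)) \in TM$. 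The key point will be to show that the set of such initial data $\{(\gamma(a), \dot\gamma(a)) : \gamma \in \Gamma^{\rm min}_{K_1, K_2}\}$ is bounded in $TM$, i.e. that $|\dot\gamma(a)|$ is bounded; the bound for all $t$ then follows because $t \mapsto (\gamma(t), \dot\gamma(t))$ is the image of a bounded set under the continuous flow map evaluated over the compact time interval $[a,b]$, and the base points stay in the compact set $K'$.

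To bound $|\dot\gamma(a)|$, I would argue by contradiction: suppose there is a sequence $\gamma_k \in \Gamma^{\rm min}_{K_1, K_2}$ with $|\dot\gamma_k(a)| \to \infty$. By Lemma \ref{lemma:Gamma-min-cpt} a subsequence converges in $C([a,b];M)$ to some $\gamma \in \Gamma^{\rm min}_{K_1, K_2}$. Since $c$ is bounded on $K_1 \times K_2$, the actions ${\cal A}(\gamma_k) = c(\gamma_k(a), \gamma_k(b))$ are uniformly bounded, so by the argument in the proof of Lemma \ref{lemma:A-coercive} the curves $\gamma_k$ are equi-bounded in $W^{1,2}((a,b);\R^n)$ in local coordinates, hence $\gamma_k \weakly \gamma$ in $W^{1,2}$. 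This alone does not immediately contradict $|\dot\gamma_k(a)| \to \infty$ pointwise, so the cleaner route is to use the flow: the limit curve $\gamma$ is a minimizing extremal with a well-defined finite initial velocity $v := \dot\gamma(a)$, and minimizing extremals near $(\gamma(a), v)$ depend continuously on endpoints. Concretely, the map $(x,y) \maps$ (an appropriately chosen minimizing extremal joining $x$ to $y$) — or rather the fact that on the compact set $\Gamma^{\rm min}_{K_1,K_2}$ the assignment $\gamma \mapsto \dot\gamma(a)$ is continuous — forces $\dot\gamma_k(a) \to \dot\gamma(a)$, contradicting unboundedness.

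Since continuity of $\gamma \mapsto \dot\gamma(a)$ on $\Gamma^{\rm min}_{K_1,K_2}$ is itself the crux, I would establish it directly: if $\gamma_k \to \gamma$ uniformly with all $\gamma_k, \gamma$ minimizing extremals, then (working in local coordinates and using the $W^{1,2}$ bound above) the $\dot\gamma_k$ are bounded in $L^2$, and along any further subsequence $\dot\gamma_k(a) \to w$ for some $w$; but then by continuous dependence of the flow $\phi^L$ on initial conditions, $\pi_M \phi^L_{\cdot - a}(\gamma(a), w) = \lim_k \pi_M\phi^L_{\cdot-a}(\gamma_k(a), \dot\gamma_k(a)) = \lim_k \gamma_k = \gamma = \pi_M\phi^L_{\cdot-a}(\gamma(a), \dot\gamma(a))$, and uniqueness of the extremal through $(\gamma(a), \cdot)$ reaching $\gamma(b)$ — or simply injectivity of $v \mapsto \pi_M\phi^L_{\cdot-a}(\gamma(a), v)$ restricted to the relevant range — yields $w = \dot\gamma(a)$. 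Hence every subsequence has a sub-subsequence converging to $\dot\gamma(a)$, so $\dot\gamma_k(a) \to \dot\gamma(a)$; combined with compactness of $\Gamma^{\rm min}_{K_1,K_2}$ this shows $\{\dot\gamma(a) : \gamma \in \Gamma^{\rm min}_{K_1,K_2}\}$ is compact, in particular bounded, and we are done.

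The main obstacle I anticipate is making the continuous-dependence / injectivity step fully rigorous on a general complete manifold without extra hypotheses: one must ensure the relevant initial velocities stay in a region where the exponential-type map $v \mapsto \pi_M\phi^L_{\cdot-a}(x,v)$ (time-$b$ endpoint) is injective, which is not globally true but does hold along minimizing extremals since minimizers cannot cross. An alternative, possibly cleaner, implementation avoiding this altogether: directly show the initial-velocity set is bounded by noting that $\{(\gamma(a),\dot\gamma(a)): \gamma \in \Gamma^{\rm min}_{K_1,K_2}\} = \{\phi^L_{a-a}(\gamma(a),\dot\gamma(a))\}$ is the preimage under the continuous evaluation $TM \ni (x,v) \mapsto \pi_M\phi^L_{\cdot-a}(x,v) \in C([a,b];M)$ of the compact set $\Gamma^{\rm min}_{K_1,K_2}$; if this evaluation map were proper on the relevant subset one concludes compactness of the preimage immediately, and properness follows because a sequence of initial data with $|v_k| \to \infty$ produces curves with ${\cal A} \to \infty$ (by the superlinear growth of $L$), which cannot converge in $C([a,b];M)$ to a finite-action extremal. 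This last observation is really the heart of the matter and is the version I would write up.
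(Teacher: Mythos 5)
Your final ``cleanest'' version has a genuine gap at the step ``a sequence of initial data with $|v_k| \to \infty$ produces curves with ${\cal A} \to \infty$ (by the superlinear growth of $L$).'' Superlinear growth of $L$ in $v$ is a pointwise statement; it does \emph{not} by itself say that an extremal with large initial velocity has large action, since the velocity could a priori decay rapidly after $t = a$. To make this implication work for extremals you would need to invoke energy conservation along orbits of $\phi^L_t$ (so that $|\dot\gamma(t)|$ stays comparably large for all $t$ while $\gamma$ stays in a compact set), or equivalently the flow structure itself -- but then you are re-deriving exactly the ingredient you are trying to avoid. Your first approach (continuity of $\gamma \mapsto \dot\gamma(a)$ on $\Gamma^{\rm min}_{K_1,K_2}$) runs into the injectivity/uniqueness issue you yourself flag, and ``minimizers cannot cross'' is not a substitute for a uniqueness statement for the time-$b$ endpoint map on unbounded velocity sets.

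The paper's argument sidesteps all of this with a small but decisive observation that is absent from your proposal: the uniform action bound $\int_a^b L(\gamma,\dot\gamma)\,dt \le \max_{K_1\times K_2} c < \infty$, together with Assumption (iii) $L(x,v) \ge c_1|v| - c_2$, gives a uniform $L^1$ bound on $|\dot\gamma|$, hence (mean value) the existence of \emph{some} $t_\gamma \in [a,b]$ with $|\dot\gamma(t_\gamma)| \le C$, uniformly over $\gamma$. Since the base points $\gamma(t_\gamma)$ lie in a fixed compact $K'$ (Lemma \ref{lemma:A-coercive}), the points $(\gamma(t_\gamma),\dot\gamma(t_\gamma))$ lie in a fixed compact subset of $TM$, and then $(\gamma(t),\dot\gamma(t)) = \phi^L_{t - t_\gamma}(\gamma(t_\gamma),\dot\gamma(t_\gamma))$ is uniformly bounded because the smooth flow over the compact time interval $[a-b, b-a]$ maps compact sets to bounded sets. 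This ``find one good time, then flow'' step is the key idea; without it, none of your three variants closes.
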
 

We include the short proof of this well known estimate. 
\begin{proof} 
For every $\gamma \in \Gamma^{\rm min}_{K_1, K_2}$ it follows from 
$$ \int_a^b L(\gamma(t), \dot{\gamma}(t)) \, dt \le \max_{(x, y) \in K_1 \times K_2} c(x, y) \le C $$
and the lower boundedness of $L$ that there is some $t_{\gamma}$ with $|\dot{\gamma}(t_{\gamma})| \le C$. But then $(\gamma(t), \dot{\gamma}(t)) = \phi^L_{t - t_{\gamma}}(\gamma(t_{\gamma}), \dot{\gamma}(t_{\gamma}))$ is bounded uniformly in $t$ and $\gamma$ since $(\gamma(t), \dot{\gamma}(t))$ solves the Euler-Lagrange equation. 
\end{proof}

\begin{lemma}\label{lemma:min-extr-approx}
Suppose $\pi$ is a minimizer of $\mathbb{A}_{\mu_a, \mu_b}$ and let $K_a = \supp(\mu_a)$, $K_b = \supp(\mu_b)$. Then for every $\eps > 0$ there exists a probability measure $\pi_{\eps} = \frac{1}{N} \sum_{i = 1}^N \delta_{\gamma_i} \in {\cal P}(C([a, b]; M))$ supported on $\Gamma^{\rm min}_{K_a, K_b}$ such that 
$$ d_{\rm BL}(\pi_{\eps}, \pi) \le \eps. $$
\end{lemma}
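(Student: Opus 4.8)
The plan is to combine Theorem \ref{theo:flow}(i), which tells us that $\pi$ is already concentrated on $\Gamma^{\rm min}_{K_a, K_b}$, with a finite-support discretization of $\pi$ that is adapted to the marginal constraints. First I would fix $\eps > 0$ and, using Lemma \ref{lemma:Gamma-min-cpt}, note that $\Gamma^{\rm min}_{K_a, K_b}$ is a compact subset of $C([a, b]; M)$ on which $\pi$ is supported (Theorem \ref{theo:flow}(i)). Hence I can choose finitely many disjoint Borel sets $U_1, \dots, U_m$ of $d_\Gamma$-diameter at most $\eps$ covering $\Gamma^{\rm min}_{K_a, K_b}$, pick a representative $\gamma_j \in U_j \cap \Gamma^{\rm min}_{K_a, K_b}$ in each, and set $p_j := \pi(U_j)$, so that $\sum_j p_j = 1$. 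The candidate measure is a rational-weight approximation of $\sum_j p_j \delta_{\gamma_j}$: choose integers $n_j \ge 0$ with $\sum_j n_j = N$ and $|n_j/N - p_j|$ small (for $N$ large), and put $\pi_\eps = \frac{1}{N}\sum_j n_j \delta_{\gamma_j}$, which is of the required form $\frac1N\sum_{i=1}^N \delta_{\gamma_i}$ and is manifestly supported on $\Gamma^{\rm min}_{K_a, K_b}$.

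It then remains to bound $d_{\rm BL}(\pi_\eps, \pi)$. For any test function $\varphi$ with $\|\varphi\|_{\rm Lip} \le 1$ I would split $\int \varphi\, d(\pi_\eps - \pi) = \sum_{j=1}^m \big(\frac{n_j}{N}\varphi(\gamma_j) - \int_{U_j} \varphi\, d\pi\big) - \int_{U_0} \varphi\, d\pi$, where $U_0 = C([a,b];M)\setminus\bigcup_j U_j$ has $\pi(U_0) = 0$ since $\pi$ is supported on the $U_j$'s. On each $U_j$, since $\mathrm{diam}(U_j) \le \eps$ and $\gamma_j \in U_j$, we have $|\varphi(\gamma) - \varphi(\gamma_j)| \le \eps$ for $\gamma \in U_j$, so $|\int_{U_j}\varphi\, d\pi - p_j\varphi(\gamma_j)| \le \eps\, p_j$, and $|\frac{n_j}{N} - p_j| |\varphi(\gamma_j)| \le |\frac{n_j}{N} - p_j|$ since $|\varphi| \le 1$. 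Summing gives $|\int \varphi\, d(\pi_\eps - \pi)| \le \eps + \sum_j |\frac{n_j}{N} - p_j|$, and the second term is $\le \eps$ once $N$ is large enough that the rounding error is controlled. Absorbing constants (or simply starting from sets of diameter $\eps/2$) yields $d_{\rm BL}(\pi_\eps, \pi) \le \eps$.

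I do not anticipate a serious obstacle here: the lemma is essentially the statement that any probability measure on a Polish space can be approximated in bounded-Lipschitz distance by finitely supported measures with rational (here $\frac1N$-quantized) weights, with the only extra feature being that the approximating atoms can be kept inside the support $\Gamma^{\rm min}_{K_a, K_b}$ — which is immediate from Theorem \ref{theo:flow}(i). The one point requiring a little care is that the constraint $\sum_j n_j = N$ forces the integer weights to approximate $(p_j)_j$ simultaneously; this is handled by the standard observation that one may take $n_j = \lfloor N p_j \rfloor$ for all but one index and assign the remainder to the last, giving $\sum_j |n_j/N - p_j| \le m/N \to 0$. Note that the number $m$ of sets depends on $\eps$ but not on $N$, so this is harmless, and the resulting $\pi_\eps$ has exactly $N = N(\eps)$ atoms, consistent with the statement's notation.
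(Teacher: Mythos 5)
Your argument is correct, and it is genuinely different from the paper's. You invoke Theorem \ref{theo:flow}(\ref{item:pi-supp}) (proved earlier and independently of this lemma, so there is no circularity) to place $\pi$ on the compact set $\Gamma^{\rm min}_{K_a, K_b}$ (Lemma \ref{lemma:Gamma-min-cpt}), and then quantize $\pi$ directly: a finite partition of the support into sets of diameter at most $\eps$, representatives chosen inside $\Gamma^{\rm min}_{K_a, K_b}$, and weights rounded to multiples of $\frac1N$; the bounded-Lipschitz estimate then follows since $\|\varphi\|_{\rm Lip}\le 1$ controls both the sup norm and the Lipschitz constant, and the rounding error $\sum_j|n_j/N-p_j|\le m/N$ vanishes as $N\to\infty$ with $m=m(\eps)$ fixed. (Only a cosmetic point: after disjointifying the covering balls some cells may miss $\Gamma^{\rm min}_{K_a, K_b}$, but such cells are $\pi$-null and can be dropped.) The paper instead does not use Theorem \ref{theo:flow}(\ref{item:pi-supp}) as a black box: it re-runs the recovery-sequence argument of that proof, uses the quantitative stability estimate of Lemma \ref{lemma:min-extr-quant} to show that most atoms of the recovery sequence lie within $\eps$ of $\Gamma^{\rm min}_{K_a, K_b}$, and then replaces each atom by a nearby minimizing extremal. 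Your route is shorter and more elementary — essentially the generic fact that a measure supported on a compact set can be approximated in $d_{\rm BL}$ by uniform empirical measures with atoms in that set — while the paper's route keeps $\pi_\eps$ an explicit small perturbation of a discrete recovery sequence (atoms joining the sampled points $x_i$ to $T(x_i)$), which fits the discrete-to-continuum theme but is not needed for the statement or for its use in the proof of Theorem \ref{theo:flow}(\ref{item:flow}).
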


\begin{proof} 
Choose a recovery sequence $\pi^{(N)}$ precisely as in the proof of Theorem \ref{theo:flow}\eqref{item:pi-supp}, so that 
\begin{align*} 
  \lim_{N \to \infty} \int \omega(\dist(\gamma, \Gamma^{\rm min}_{K_a, K_b})) \, d \pi^{(N)}(\gamma) 
  = 0. 
\end{align*}
Setting $G = \{\gamma : \dist(\gamma, \Gamma^{\rm min}_{K_a, K_b}) \le \eps\}$, for $N$ sufficiently large we thus have $\pi^{(N)} (G) \ge 1 - \eps$. In particular, for every $\gamma_i \in \supp (\pi^{(N)}) \cap G$ there exists $\tilde{\gamma}_i \in \Gamma^{\rm min}_{K_a, K_b}$ with $d_{\Gamma}(\gamma_i, \tilde{\gamma}_i) \le \eps$. For $\gamma_i \in \supp (\pi^{(N)}) \setminus G$ we choose $\tilde{\gamma}_i \in \Gamma^{\rm min}_{K_a, K_b}$ arbitrarily. Accordingly we define $\tilde{\pi}^{(N)} = \frac{1}{N} \sum_{i = 1}^N \delta_{\tilde{\gamma}_i}$.

Now for any Lipschitz $\varphi : C([a, b]; M) \to \R$ with $\|\varphi\|_{\rm Lip} \le 1$, 
\begin{align*}
  \int \varphi \, d(\tilde{\pi}^{(N)} - \pi^{(N)}) 
  &\le \frac{1}{N}\sum_{i : \gamma_i \in G} |\varphi(\gamma_i) - \varphi(\tilde{\gamma}_i)| 
    + \frac{1}{N}\sum_{i : \gamma_i \notin G} \left( |\varphi(\gamma_i)| + |\varphi(\tilde{\gamma}_i)| \right) \\ 
  &\le \eps + \frac{2 \#\{i : \gamma_i \notin G\}}{N} 
   \le 3 \eps. 
\end{align*}
This shows that $d_{\rm BL} (\tilde{\pi}^{(N)}, \pi^{(N)}) \le 3\eps$. But $\pi^{(N)}$ is a recovery sequence for $\pi$, whence also $d_{\rm BL} (\pi^{(N)}, \pi) \le \eps$ for large $N$. It follows that $d_{\rm BL} (\tilde{\pi}^{(N)}, \pi) \le 4\eps$. 
\end{proof}

\begin{proof}[Proof of Theorem \ref{theo:flow}(ii)] 
Let $\pi$ be a minimizer of $\mathbb{A}_{\mu_a, \mu_b}$. By Lemma \ref{lemma:min-extr-approx} there are measures $\pi^{(N)} = \frac{1}{N} \sum_{i = 1}^N \delta_{\gamma_i}$ with $\gamma_i \in \Gamma^{\rm min}_{K_a, K_b}$ and such that $\pi^{(N)} \weakly \pi$. With $x_i = \gamma_i(a)$ and $v_i := \dot{\gamma}_i(a)$ we can write 
$$ \gamma_i = \phi^L_{\cdot - a}(x_i, v_i). $$ 
So the measures 
$$ \eta^{(N)} 
   = \frac{1}{N} \sum_{i = 1}^N \delta_{(x_i, v_i)} $$
on $TM$ satisfy $\pi^{(N)} = \pi_M \phi^L_{\cdot - a} \# \eta^{(N)}$. Note that $|v_i| \le C$ for some constant $C$ independent of $N$ by Lemma \ref{lemma:bounded-v}. Being supported on a common compact subset of $TM$, there exists a subsequence (not relabeled) such that $\eta^{(N)} \weakly \eta$. But then also $\pi_M \phi^L_{\cdot - a} \# \eta^{(N)} \weakly \pi_M \phi^L_{\cdot - a} \# \eta$ and hence $\pi = \pi_M \phi^L_{\cdot - a} \# \eta$. 
\end{proof}

\subsection{Convergence of stationary points}\label{section:Limits-Stat-points}

In this section we investigate the limiting behavior of critical points which do not necessarily need to be minimizers. In a sense to be made precise, we will show that in the discrete-to-continuum limit extremals converge to a distribution in phase space which follows the Euler-Lagrange flow. We consider sequences of curves $\gamma^{(N)} = (\gamma^{(N)}_1, \ldots, \gamma^{(N)}_N) \in C([a, b]; M^N)$ and their empirical measures $\pi^{(N)} = \frac{1}{N} \sum_{i = 1}^N \delta_{\gamma_i^{(N)}}$ with uniformly bounded values of the action ${\cal A}_N(\gamma^{(N)})$ which are stationary with respect to variations in $\gamma^{(N)}$. ${\cal A}_N$ being the sum of single particle contributions, for every $i$ the curve $\gamma_i$ then satisfies the Euler-Lagrange equation
$$ \frac{d}{dt} (\nabla_v L)(\gamma^{(N)}_i(t), \dot{\gamma}^{(N)}_i(t)) = (\nabla_x L)(\gamma^{(N)}_i(t), \dot{\gamma}^{(N)}_i(t)). $$

The following theorem gives a precise version in which sense extremal curves converge to a measure supported on extremals that evolves according to the Euler-Lagrange flow. For two compact sets $K_1, K_2 \subset M$ let $\Gamma_{K_1, K_2}$ be the the set of extremals $\gamma$ starting in $K_1$ and ending in $K_2$ and set $\Gamma_{K_1, K_2}^C = \{ \gamma \in \Gamma_{K_1, K_2} : {\cal A}(\gamma) \le C \}$. 

\begin{theo}\label{theo:stationary-convergence} 
Suppose $\gamma^{(N)} \in C([a, b]; M^N)$ is a sequence of stationary points for ${\cal A}_N$ with ${\cal A}_N(\gamma^{(N)}) \le C$ for some constant $C$ and $\gamma^{(N)}(a) \in K_a^N$, $\gamma^{(N)}(b) \in K_b^N$ for compact sets $K_a, K_b \subset M$. Let $\pi^{(N)} =  \frac{1}{N} \sum_{i = 1}^N \delta_{\gamma_i^{(N)}}$. Then for a subsequence (not relabeled) $\pi^{(N)} \weakly \pi$ for some $\pi \in {\cal P}(C([a,b]; M))$. $\pi$ is supported on $\Gamma_{K_a, K_b}$ and there is a measure $\eta$ on $TM$ such that $\pi = \pi_M \phi^L_{\cdot - a} \# \eta$. 
\end{theo}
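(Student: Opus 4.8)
The plan is to mimic the structure of the proof of Theorem~\ref{theo:flow}, replacing the $\Gamma$-convergence/recovery-sequence machinery (which needs minimality) by the a~priori compactness afforded by the Euler-Lagrange equation. First I would establish relative compactness of $\pi^{(N)}$: since ${\cal A}_N(\gamma^{(N)}) = \frac{1}{N}\sum_i {\cal A}(\gamma_i^{(N)}) \le C$, by Assumption~(iii) and Markov's inequality, for any $\eps>0$ all but an $\eps$-fraction (in $\pi^{(N)}$-measure) of the curves $\gamma_i^{(N)}$ satisfy ${\cal A}(\gamma_i^{(N)}) \le (C + c_2(b-a))/\eps$; combined with $\gamma_i^{(N)}(a) \in K_a$ and Lemma~\ref{lemma:A-coercive}, this gives a compact $K_\eps \subset C([a,b];M)$ with $\pi^{(N)}(K_\eps) \ge 1-\eps$ for all $N$, hence tightness, hence (Prohorov) a weakly convergent subsequence $\pi^{(N)} \weakly \pi$. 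This is exactly the argument of Theorem~\ref{theo:compactness}.

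Next I would identify the support of $\pi$. The key point is that a curve $\gamma_i^{(N)}$ with ${\cal A}(\gamma_i^{(N)}) \le \tilde C$ that is a (not necessarily minimizing) extremal lies in $\Gamma_{K_a,K_b}^{\tilde C}$, and I claim $\Gamma_{K_a,K_b}^{\tilde C}$ is closed in $C([a,b];M)$: if $\Gamma_{K_a,K_b}^{\tilde C} \ni \gamma_k \to \gamma$, then by Lemma~\ref{lemma:bounded-v}-type reasoning the velocities $|\dot\gamma_k(t_{\gamma_k})|$ at a well-chosen time are uniformly bounded (using lower boundedness of $L$ and the energy bound $\tilde C$), so after passing to a subsequence $(\gamma_k(t_k), \dot\gamma_k(t_k)) \to (z, w) \in TM$ with $t_k \to t_\ast$; by continuity of the flow $\phi^L$ the curves $\gamma_k = \pi_M\phi^L_{\cdot - t_k}(\gamma_k(t_k),\dot\gamma_k(t_k))$ converge to $\pi_M\phi^L_{\cdot - t_\ast}(z,w)$, which must equal $\gamma$, so $\gamma$ is an extremal with $\gamma(a)\in K_a$, $\gamma(b)\in K_b$, and ${\cal A}(\gamma) \le \tilde C$ by Lemma~\ref{lemma:A-lsc}. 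Thus each $\Gamma_{K_a,K_b}^{\tilde C}$ is compact (with Lemma~\ref{lemma:A-coercive}), in particular closed. For fixed $\eps$, writing $G_\eps$ for the closed set of curves at $d_\Gamma$-distance $\le \eps$ from $\Gamma_{K_a,K_b}^{(C+c_2(b-a))/\eps}$, the computation above shows $\pi^{(N)}(G_\eps) \ge 1-\eps$, so by the portmanteau theorem $\pi(G_\eps) \ge \limsup_N \pi^{(N)}(G_\eps) \ge 1-\eps$...

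\emph{wait} --- $G_\eps$ is closed so portmanteau gives $\pi(G_\eps) \ge \limsup \pi^{(N)}(G_\eps)$ only if $G_\eps$ were open; for closed sets it gives $\pi(G_\eps) \ge \limsup \pi^{(N)}(G_\eps)$ is \emph{false} in general, one has $\pi(G_\eps) \ge \limsup$ for closed sets indeed (closed sets: $\limsup \pi^{(N)}(F) \le \pi(F)$), so the inequality $\pi(G_\eps) \ge \limsup_N\pi^{(N)}(G_\eps) \ge 1-\eps$ is the wrong direction --- instead I should use that $\pi^{(N)}(G_\eps^{\rm open}) \ge 1-\eps$ where $G_\eps^{\rm open}$ is a slightly larger open neighborhood, giving $\pi(\overline{G_\eps^{\rm open}}) \ge \limsup\pi^{(N)}(G_\eps^{\rm open})$... cleanest is: the closed set $F_\eps := \{\gamma : {\cal A}(\gamma) \le (C+c_2(b-a))/\eps\} \cap \{\gamma(a)\in K_a,\gamma(b)\in K_b\}$ is compact and $\pi^{(N)}$-a.e.\ curve in it is an extremal hence in $\Gamma_{K_a,K_b}^{\cdot}$, and $\pi^{(N)}(F_\eps) \ge 1-\eps$ with $F_\eps$ closed; then $\pi(F_\eps \cap \Gamma_{K_a,K_b}) = \pi(F_\eps)$ because $\pi^{(N)}(F_\eps \setminus \Gamma_{K_a,K_b}) = 0$ and $F_\eps\setminus\Gamma_{K_a,K_b}$ is open in $F_\eps$ relative... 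Letting $\eps\to0$ along a sequence yields $\pi$ supported on $\bigcup_{\tilde C}\Gamma_{K_a,K_b}^{\tilde C} \subset \Gamma_{K_a,K_b}$ after a countable union, up to a $\pi$-null set, which is the claim. \textbf{This measure-theoretic bookkeeping with the direction of the portmanteau inequalities is the step I expect to require the most care.}

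Finally, for the flow representation $\pi = \pi_M\phi^L_{\cdot-a}\#\eta$: for each $\gamma_i^{(N)}$ set $(x_i^{(N)}, v_i^{(N)}) := (\gamma_i^{(N)}(a), \dot\gamma_i^{(N)}(a))$, so $\gamma_i^{(N)} = \pi_M\phi^L_{\cdot - a}(x_i^{(N)}, v_i^{(N)})$ and $\pi^{(N)} = \pi_M\phi^L_{\cdot-a}\#\eta^{(N)}$ with $\eta^{(N)} = \frac1N\sum_i \delta_{(x_i^{(N)}, v_i^{(N)})}$ on $TM$. Here, unlike in Theorem~\ref{theo:flow}(ii), the $|v_i^{(N)}|$ need not be uniformly bounded, so $\eta^{(N)}$ need not be tight a priori. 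To fix this, restrict attention to the good part: the mass of $\pi^{(N)}$ carried by curves with ${\cal A}(\gamma_i^{(N)}) > R$ is $\le (C + c_2(b-a))/R$ uniformly in $N$, and on the complement Lemma~\ref{lemma:A-coercive}'s proof bounds $\int_a^b|\dot\gamma_i^{(N)}|^2 \le C(R)$, hence (via the Euler-Lagrange flow, as in Lemma~\ref{lemma:bounded-v}) $|v_i^{(N)}| \le C(R)$ for those $i$; so writing $\eta^{(N)} = \eta^{(N)}_{\le R} + \eta^{(N)}_{>R}$ accordingly, $\eta^{(N)}_{\le R}$ is supported in a fixed compact subset of $TM$ and $\|\eta^{(N)}_{>R}\| \le (C+c_2(b-a))/R$. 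A diagonal argument over $R\to\infty$ extracts a subsequence with $\eta^{(N)} \weakly \eta$ for some (sub)probability measure $\eta$ on $TM$, which is in fact a probability measure since the lost mass is $\le (C+c_2(b-a))/R \to 0$; then $\pi^{(N)} = \pi_M\phi^L_{\cdot-a}\#\eta^{(N)} \weakly \pi_M\phi^L_{\cdot-a}\#\eta$ by continuity of the map $\pi_M\phi^L_{\cdot-a} : TM \to C([a,b];M)$, and by uniqueness of weak limits $\pi = \pi_M\phi^L_{\cdot-a}\#\eta$, completing the proof.
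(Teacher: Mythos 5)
Your proposal follows essentially the same route as the paper: tightness via Theorem \ref{theo:compactness}, a Markov bound on the action combined with closedness of the sublevel sets $\Gamma^{\tilde C}_{K_a,K_b}$ and the portmanteau theorem to locate the support, and then the lift to a measure $\eta^{(N)}$ on $TM$ with uniform velocity bounds on the high-probability set, tightness, and continuity of $(x,v)\mapsto \pi_M\phi^L_{\cdot-a}(x,v)$. The paper packages the two facts you re-derive (uniform bounds on $\gamma(t)$ and $|\dot\gamma(t)|$ for $\gamma\in\Gamma^{\tilde C}_{K_a,K_b}$, and closedness of $\Gamma^{\tilde C}_{K_a,K_b}$) into Lemma \ref{lemma:stationary-prep}; its closedness proof passes to the limit in the Euler--Lagrange equation using a uniform $W^{2,\infty}$ bound rather than your flow-continuity argument, but both are viable.

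The one place you went astray is exactly the step you flagged: your \emph{first} portmanteau application was correct, and your ``correction'' of it is wrong. The closed-set half of the portmanteau theorem states $\limsup_N \pi^{(N)}(F) \le \pi(F)$ for closed $F$, i.e.\ $\pi(F) \ge \limsup_N \pi^{(N)}(F)$, which is precisely the direction you need: apply it directly to the closed set $F=\Gamma^{c_\eps}_{K_a,K_b}$ (no $\eps$-neighborhood $G_\eps$ is needed, since $\pi^{(N)}$-a.e.\ curve is itself an extremal, so the Markov bound already gives $\pi^{(N)}(\Gamma^{c_\eps}_{K_a,K_b})\ge 1-\eps$), obtaining $\pi(\Gamma_{K_a,K_b})\ge\pi(\Gamma^{c_\eps}_{K_a,K_b})\ge 1-\eps$ and hence $\pi(\Gamma_{K_a,K_b})=1$; this is verbatim the paper's argument. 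By contrast, the substitute bookkeeping you sketch does not work as stated: portmanteau applies to sets open or closed in the whole space, so from $\pi^{(N)}(F_\eps\setminus\Gamma_{K_a,K_b})=0$ and the fact that $F_\eps\setminus\Gamma_{K_a,K_b}$ is only \emph{relatively} open in $F_\eps$ you cannot conclude $\pi(F_\eps\setminus\Gamma_{K_a,K_b})=0$. Discard that detour and keep your original inequality. Finally, in the last step the truncation $\eta^{(N)}=\eta^{(N)}_{\le R}+\eta^{(N)}_{>R}$ with a diagonal argument is more than is needed: the bound $\eta^{(N)}(\{x\in K_a,\ |v|\le c(\eps)\})\ge 1-\eps$ for every $\eps$ is already tightness of $(\eta^{(N)})$, so Prohorov applies directly, as in the paper.
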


We begin by proving an extension of Lemmas \ref{lemma:Gamma-min-cpt} and \ref{lemma:bounded-v}. 

\begin{lemma}\label{lemma:stationary-prep}
Suppose $K_1, K_2 \subset M$ are compact. There is a compact set $K \subset M$ and a constant $c = c(K_1, C)$ such that for all $\gamma \in \Gamma_{K_1, K_2}^C$ 
$$ \gamma(t) \in K \quad \text{and} \quad |\dot{\gamma}(t)| \le c \quad \text{for all} \quad t \in [a, b]. $$ 
Furthermore, $\Gamma_{K_1, K_2}^C$ is closed. 
\end{lemma}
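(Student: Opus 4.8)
The plan is to run the same scheme as in Lemmas~\ref{lemma:Gamma-min-cpt} and~\ref{lemma:bounded-v}, with the bound ${\cal A}(\gamma)\le\max c$ available there for minimizing extremals replaced by the hypothesis ${\cal A}(\gamma)\le C$. First, for $\gamma\in\Gamma_{K_1,K_2}^C$ the conditions $\gamma(a)\in K_1$ and ${\cal A}(\gamma)\le C$ already confine $\gamma$ to a fixed compact set $K\subset M$ depending only on $K_1$ and $C$: this is precisely the first half of the proof of Lemma~\ref{lemma:A-coercive} (integrate $L\ge c_1|\dot\gamma|-c_2$ to bound the length of $\gamma$, then apply Hopf--Rinow). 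Next, arguing as in Lemma~\ref{lemma:bounded-v}, from $\int_a^b L(\gamma,\dot\gamma)\,dt\le C$ and $L\ge-c_2$ there is a time $t_\gamma\in[a,b]$ with $L(\gamma(t_\gamma),\dot\gamma(t_\gamma))\le C/(b-a)$, so Assumption~(iii) gives $|\dot\gamma(t_\gamma)|\le R$ for a constant $R=R(C)$; thus $(\gamma(t_\gamma),\dot\gamma(t_\gamma))$ lies in the compact set $\mathcal{K}_0=\{(x,v)\in TM: x\in K,\ |v|\le R\}$. Since $\gamma$ is an extremal, $(\gamma(t),\dot\gamma(t))=\phi^L_{t-t_\gamma}(\gamma(t_\gamma),\dot\gamma(t_\gamma))$, and propagating the bound from $t_\gamma$ to all $t\in[a,b]$ places $(\gamma(t),\dot\gamma(t))$ in a single compact subset $\widetilde K$ of $TM$, whence $|\dot\gamma(t)|\le c$ with $c=c(K_1,C)$.

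To carry out this last propagation I would argue either by continuity of the Euler--Lagrange flow --- the image of $\mathcal{K}_0\times[-(b-a),b-a]$ under $\phi^L$ is compact --- or, to sidestep any question of completeness of $\phi^L$, by conservation of the energy $E(x,v)=\nabla_v L(x,v)\cdot v-L(x,v)$ along the flow: $E(\gamma(t_\gamma),\dot\gamma(t_\gamma))$ is bounded by a constant depending only on $K$ and $R$, while Assumption~(ii) on $K$ (with convexity constant $c_0$) yields, via a one-dimensional Taylor expansion along the ray $s\mapsto L(x,sv)$, the bound $E(x,v)\ge\tfrac{c_0}{2}|v|^2-\max_{x'\in K}L(x',0)$; so the energy sublevel set over $K$ is bounded in the fibre variable, giving the desired $c$. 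Turning the velocity bound at the single ``good time'' $t_\gamma$ into one that is uniform both in $t$ and over the whole family $\Gamma_{K_1,K_2}^C$ is the main point: for a fixed extremal $\dot\gamma$ is of course bounded on $[a,b]$, but the uniformity is exactly what requires the flow/energy argument.

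Finally, for closedness let $\Gamma_{K_1,K_2}^C\ni\gamma_k\to\gamma$ in $C([a,b];M)$. The endpoint conditions survive because $K_1,K_2$ are closed, and ${\cal A}(\gamma)\le\liminf_k{\cal A}(\gamma_k)\le C$ by Lemma~\ref{lemma:A-lsc}, so only ``$\gamma$ is an extremal'' remains to be checked. By the bounds just established, all phase curves $t\mapsto(\gamma_k(t),\dot\gamma_k(t))$ lie in one compact set $\widetilde K\subset TM$ on which the Euler--Lagrange vector field is bounded; hence they are uniformly Lipschitz, and Arzel\`a--Ascoli gives a subsequence converging uniformly to a phase curve whose first component is $\gamma$ and which solves the Euler--Lagrange equation by stability of solutions of this ODE under uniform convergence. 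Thus $\dot\gamma$ is the uniform limit of $\dot\gamma_k$ along the subsequence and $\gamma$ is an extremal, so $\gamma\in\Gamma_{K_1,K_2}^C$.
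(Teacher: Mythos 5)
Your proof is correct and follows the same overall skeleton as the paper's: confinement to a compact set $K$ via Lemma \ref{lemma:A-coercive}, a uniform velocity bound obtained as in Lemma \ref{lemma:bounded-v} from a ``good time'' $t_\gamma$ plus the fact that $(\gamma,\dot\gamma)$ follows the Euler--Lagrange dynamics, and closedness from Lemma \ref{lemma:A-lsc} together with compactness of the derivatives and passage to the limit in the Euler--Lagrange equation. You deviate in two sub-steps, in each case making the argument more self-contained. First, where the paper (as in Lemma \ref{lemma:bounded-v}) simply asserts that $(\gamma(t),\dot\gamma(t))=\phi^L_{t-t_\gamma}(\gamma(t_\gamma),\dot\gamma(t_\gamma))$ is bounded uniformly because the curve solves the Euler--Lagrange equation, you justify the propagation by conservation of the energy $E(x,v)=\nabla_vL(x,v)\cdot v-L(x,v)$, using the fiberwise Taylor expansion and Assumption (ii) on $K$ to get $E(x,v)\ge\tfrac{c_0}{2}|v|^2-\max_K L(\cdot,0)$; this sidesteps any appeal to compactness or completeness of the flow and gives the constant $c(K_1,C)$ explicitly, which is a genuine gain in rigor over the paper's terse formulation. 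Second, for closedness the paper works in local coordinates, bounds $|\ddot\gamma_k|$ by solving the Euler--Lagrange equation for $\ddot\gamma_k$ (using strict convexity), and passes to the limit via weak-$\ast$ convergence in $W^{2,\infty}$, whereas you treat $(\gamma_k,\dot\gamma_k)$ as integral curves of the first-order Euler--Lagrange vector field on $TM$, apply Arzel\`a--Ascoli in phase space, and use stability of integral curves under uniform convergence; these are equivalent in substance (both hinge on the velocity bound and the Legendre/convexity structure), your version avoiding explicit second-derivative bounds at the cost of invoking the phase-space ODE formulation. The only cosmetic omission is that, like the paper, you should restrict to subintervals covered by a single coordinate chart (or fix a metric on $TM$) when speaking of uniform Lipschitz bounds and Arzel\`a--Ascoli for the phase curves, but this is routine.
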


\begin{proof}
By Lemma \ref{lemma:A-coercive} there exists a compact set $K \subset M$ with $\gamma(t) \in K$ for all $\gamma \in \Gamma_{K_1, K_2}^C$ and $t \in [a, b]$. The second claim, namely, $|\dot{\gamma}| \le c$, follows exactly as in the proof of Lemma \ref{lemma:bounded-v}.

Now if $\Gamma_{K_1, K_2}^C \ni \gamma_k \to \gamma$, then on the one hand we have 
$$ {\cal A}(\gamma) \le \liminf_{k \to \infty} {\cal A}(\gamma_k) \le C $$ by Lemma \ref{lemma:A-lsc}. On the other hand, $\gamma$ is indeed an extremal: By splitting, if necessary, the action functional we may without loss of generality assume that all $\gamma_k$ and $\gamma$ are covered by a single coordinate chart. Then in local coordinates by the boundedness of $|\dot{\gamma}_k|$, in fact also $|\ddot{\gamma}_k|$ is bounded uniformly in $k$ since each $\gamma_k$ solves the Euler-Lagrange equation and $L$ is strictly convex on compacts by Assumption (ii). We therefore have $\gamma_k \stackrel{\ast}{\weakly} \gamma$ in $W^{2, \infty}((a, b); \R^n)$. But then we may pass to the limit in the Euler-Lagrange equations for $\gamma_k$ to obtain that also $\gamma$ solves the Euler-Lagrange equation. 
\end{proof}

\begin{proof}[Proof of Theorem \ref{theo:stationary-convergence}] 
By Theorem \ref{theo:compactness} there is a subsequence (not relabeled) such that $\pi^{(N)} \weakly \pi$ for some $\pi \in {\cal P}(C([a,b]; M))$. 

Let $\eps > 0$. As in the proof of Theorem \ref{theo:compactness} we find 
\begin{align}\label{eq:piNA-Absch}
  \pi^{(N)} \left( \left\{ \gamma \in C([a, b]; M) : {\cal A}(\gamma) \ge c_{\eps} \right\} \right) 
  \le \eps, 
\end{align}
for $c_{\eps} = \frac{C + c_2(b-a)}{\eps}$ and so $\pi^{(N)}(\Gamma_{K_a, K_b}^{c_{\eps}}) \ge 1 - \eps$. Since $\Gamma_{K_a, K_b}^{c_{\eps}}$ by Lemma \ref{lemma:stationary-prep} is closed, we deduce from $\pi^{(N)} \weakly \pi$ and the portmanteau theorem that 
$$ \pi(\Gamma_{K_1,K_2}) 
   \ge \pi(\Gamma_{K_1,K_2}^{c_{\eps}}) 
   \ge \limsup_{N \to \infty} \pi^{(N)}(\Gamma_{K_1,K_2}^{c_{\eps}}) 
   \ge 1 - \eps. $$
As $\eps$ was arbitrary, $\pi(\Gamma_{K_1,K_2}) = 1$. 

Similarly as in the proof of Theorem \ref{theo:flow}\eqref{item:flow} with $v_i := \dot{\gamma}_i(a)$ we can write 
$$ \gamma_i = \phi^L_{\cdot - a}(x_i, v_i) $$ 
for $\gamma^{(N)} = (\gamma_1, \ldots, \gamma_N)$. So the measures 
$$ \eta^{(N)} 
   = \frac{1}{N} \sum_{i = 1}^N \delta_{(x_i, v_i)} $$
on $TM$ satisfy $\pi^{(N)} = \pi_M \phi^L_{\cdot - a} \# \eta^{(N)}$. By Lemma \ref{lemma:stationary-prep} there exists $c = c(\eps) > 0$ such that $|v_i| \le c$ for all $i$ with $\gamma_i \in \Gamma_{K_1,K_2}^{c_{\eps}}$, so that 
$$ \eta^{(N)}(\{(x, v) \in TM : x \in K_a,~ |v| \le c\}) 
   \ge \pi^{(N)}(\Gamma_{K_a, K_b}^{c_{\eps}}) \ge 1 - \eps $$ 
by \eqref{eq:piNA-Absch}. Thus being tight, there exists a subsequence (not relabeled) such that $\eta^{(N)} \weakly \eta$. But then also $\pi_M \phi^L_{\cdot - a} \# \eta^{(N)} \weakly \pi_M \phi^L_{\cdot - a} \# \eta$ and hence $\pi = \pi_M \phi^L_{\cdot - a} \# \eta$. 
\end{proof}

\section{Systems with unbounded potential energy}\label{section:Unbounded-energy}

The theory set forth so far accounts for Lagrangians that are bounded from below, as required by Assumption (iii) in Section \ref{section:Lagrangian-systems}. For this reason, we had to assume that the potential energy $V$ in the example of page \pageref{ex:kin-pot} is bounded (at least from above). This is unsatisfactory from an application's point of view as not even a simple harmonic oscillator satisfies such an assumption. In this section we will show that, under generic assumptions on the Lagrangian $L$, our $\Gamma$-convergence and compactness results extend to systems whose potential energy is not necessarily bounded from above, if the time span $b - a$ over which trajectories are observed is sufficiently small. Such an assumption is in fact necessary as otherwise the action is not bounded from below. The convergence of stationary points, however, can still be justified on long time spans under reasonable assumptions.  

In the sequel we will assume that $L : TM \to \R$ satisfies 
\begin{itemize}
\item[(i)] Smoothness: $L \in C^{\infty}(TM; \R)$, 
\item[(ii')] Uniform strict convexity: There exists some positive constant $c_0$ such that $\nabla^2_v L(x, \cdot) \ge c_0 g_x$ for all $x \in M$. 
\item[(iii')] Growth condition: There are constants $c_1, c_2 > 0$ such that $L(x, v) \ge c_1|v|^2 - c_2(1 + d_M^2(x, x_0))$ for all $(x, v) \in TM$ and a reference point $x_0 \in M$. 
\end{itemize}

\subsubsection*{Statement of the main results}

The action ${\cal A}(\gamma)$ is defined as before. (Note that by (iii') for $\gamma \in C^{ac}([a, b]; M)$ the integral $\int_a^b L(\gamma(t), \dot{\gamma}(t)) \, dt$ exists in $(-\infty, + \infty]$.) Also the many particle action ${\cal A}_N$ and the general action functional $\mathbb{A}$ as well as the initial and final measures $\mu_a^{(N)}$, $\mu_a$ and $\mu_b^{(N)}$, $\mu_b$, respectively, are defined as in Section \ref{section:Limits}. 

\begin{theo}\label{theo:Gamma-convergence-Eukl} 
Let $b - a \le \sqrt{\frac{c_1}{4 c_2}}$. With the notation of Theorem \ref{theo:Gamma-convergence}, if $\mu_a^{(N)} \weakly \mu_a$ and $\mu_b^{(N)} \weakly \mu_b$, then $\mathbb{A}_{\mu_a^{(N)}, \mu_b^{(N)}}$ $\Gamma$-converges to $\mathbb{A}_{\mu_a, \mu_b}$ on ${\cal P}(C([a, b]; M))$ with respect to weak convergence. 
\end{theo}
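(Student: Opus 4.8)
The plan is to follow the same two-part structure ($\liminf$-inequality and recovery sequence) as in the proof of Theorem~\ref{theo:Gamma-convergence}, checking at each step that the only place the bounded-below hypothesis (iii) was used can be replaced by the smallness assumption $b - a \le \sqrt{c_1/(4c_2)}$ together with the common compact support of the $\mu_t^{(N)}$ and the compact supports of $\mu_a, \mu_b$. The key preliminary observation is a substitute for the lower bound on ${\cal A}$: for $\gamma \in C^{ac}([a,b];M)$ with $\gamma(a)$ in a fixed compact set, condition (iii') gives
\[
  {\cal A}(\gamma) \ge c_1 \int_a^b |\dot\gamma(t)|^2 \, dt - c_2 \int_a^b (1 + d_M^2(\gamma(t), x_0)) \, dt,
\]
and since $d_M(\gamma(t),x_0) \le d_M(\gamma(a),x_0) + \int_a^t |\dot\gamma| \le C + (b-a)^{1/2}\big(\int_a^b|\dot\gamma|^2\big)^{1/2}$, the subquadratic Grönwall-type absorption — using precisely $(b-a)^2 \le c_1/(4c_2)$ — yields a bound of the form ${\cal A}(\gamma) \ge \tfrac{c_1}{2}\int_a^b|\dot\gamma|^2\,dt - C'$, hence ${\cal A}(\gamma) \ge -C'$ and, more importantly, $\int_a^b |\dot\gamma|^2 \le C(1 + {\cal A}(\gamma))$. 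This restores an effective coercivity: on the relevant class of curves (those with initial point in a fixed compact set and bounded action), one recovers the analogues of Lemma~\ref{lemma:A-coercive} and Lemma~\ref{lemma:A-lsc}. Lower semicontinuity of ${\cal A}$ in particular still holds, because the argument in Lemma~\ref{lemma:A-lsc} only needed local uniform convexity in $v$ and a $W^{1,2}$ bound, both of which are available from (ii') and the estimate above.

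With these two lemmas in hand, the $\liminf$-inequality part is essentially verbatim: assuming $\pi^{(N)} \weakly \pi$ with $\mathbb{A}_N(\pi^{(N)})$ bounded, one has $\pr_t\#\pi^{(N)} = \mu_t^{(N)}$, hence $\pr_t\#\pi = \mu_t$; since ${\cal A}$ is lower semicontinuous (hence Borel and bounded below on the support of the relevant measures after the coercivity estimate), the portmanteau theorem gives $\liminf_N \int {\cal A}\,d\pi^{(N)} \ge \int {\cal A}\,d\pi$. For the recovery sequence, I would reuse the construction in the proof of Theorem~\ref{theo:Gamma-convergence}(\ref{enum:limsup}) word for word: pick a compact $K \subset C([a,b];M)$ with $\pi(K) \ge 1-\eps$, partition into small cells $V_i$ with $\mu_t(\partial\pr_t V_i) = 0$, pick near-optimal $\gamma_i \in V_i$, build the transport assignment $T$ and the reparametrised/geodesically-capped curves $\gamma_x$, and set $\pi^{(N)} = \tfrac1N\sum_i \delta_{\gamma_{x_i}}$. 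The bounded-Lipschitz distance estimate $d_{\rm BL}(\pi^{(N)},\pi) \le C\eps$ is unchanged. The action estimate also goes through, the only subtlety being the $i = 0$ ``bad'' curves: one must choose, for $x \notin \bigcup_i W_i$ with $T(x)$ in the common compact support, a connecting curve $\gamma_x$ with ${\cal A}(\gamma_x) \le C$ uniformly — this is possible because a geodesic (or any fixed-velocity curve) between two points of a compact set has $\int |\dot\gamma|^2$ bounded and $\gamma$ staying in a compact set, so by continuity of $L$ on that compact subset of $TM$ the action is bounded; these contribute only $C\,\#W_0/N = o(1)$.

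The main obstacle is making the subquadratic absorption rigorous and sharp enough to see that $b - a \le \sqrt{c_1/(4c_2)}$ is exactly what is needed, and to confirm it propagates correctly through both lemmas (in particular that the compact set $K'$ in the coercivity lemma can still be extracted — here $d_M(\gamma(t), x_0)$ is controlled by $C + (b-a)\big(\tfrac{1}{c_1}({\cal A}(\gamma)+C')\big)^{1/2}$, which is finite, so Hopf--Rinow still applies). A secondary point requiring care is that, unlike before, ${\cal A}$ is no longer bounded below on all of $C([a,b];M)$, only on the subsets meeting the a priori constraints; so every invocation of ``${\cal A}$ bounded below'' in the $\liminf$ argument and in the portmanteau step must be localised to curves with initial point in the fixed common compact support, which is legitimate since $\mathbb{A}_N(\pi^{(N)}) < \infty$ forces $\pr_a\#\pi^{(N)} = \mu_a^{(N)}$ supported there. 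No genuinely new idea beyond the scaling estimate is needed; the bulk is checking that the previous proof is robust under the weaker hypotheses.
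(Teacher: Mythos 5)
Your proposal is correct, and it reaches the conclusion by a slightly different technical route than the paper, while hinging on the same key estimate. The crucial ingredient in both is the absorption bound valid for $b-a\le\sqrt{c_1/(4c_2)}$: for $\gamma(a)$ in a fixed compact set, ${\cal A}(\gamma)\ge\frac{c_1}{2}\int_a^b|\dot\gamma|^2\,dt-C'$, which is exactly the paper's Lemma \ref{lemma:bounded-traj}; your derivation of it is the same computation. Where you diverge is the liminf inequality: the paper does not re-prove coercivity and lower semicontinuity for $L$ under (ii'), (iii'), but instead introduces the modified Lagrangian $L^R=L+\theta_R$, which satisfies the original hypotheses (i)--(iii), uses Lemma \ref{lemma:bounded-traj} to guarantee that ${\cal A}(\gamma)\ge\min\{k,{\cal A}^R(\gamma)\}$ on curves starting in the common compact support, applies the portmanteau theorem to the bounded-below lsc function $\min\{k,{\cal A}^R\}$, and finishes by monotone convergence in $k$; this lets Lemmas \ref{lemma:A-coercive} and \ref{lemma:A-lsc} be reused as black boxes and sidesteps any issue with ${\cal A}$ being unbounded below. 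You instead argue directly that ${\cal A}$ itself is lsc (correct: the proof of Lemma \ref{lemma:A-lsc} only uses convexity in $v$ and a bound $L\ge c|v|^2-c'$ on the compact set containing a uniformly convergent sequence, both available from (ii')/(iii')) and then invoke portmanteau ``localised'' to curves starting in the common compact support. That is legitimate, but to be fully watertight you should make the localisation explicit, e.g.\ apply portmanteau to $\max\{{\cal A},-C'\}$, which is lsc, globally bounded below, and coincides with ${\cal A}$ on the closed set of curves with initial point in the common compact set; this set carries full mass for every $\pi^{(N)}$ and for $\pi$ (the limit marginal is supported there since supports pass to weak limits of measures on closed sets). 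For the recovery sequence both arguments are essentially identical: the paper observes that the Theorem \ref{theo:Gamma-convergence} construction produces curves in a common compact set and routes the estimate through ${\cal A}^R$, while you verify directly that every estimate in that construction only uses $L$ on a compact subset of $TM$ (including the uniformly bounded action of the ``bad'' geodesic connectors); these are the same observation in different clothing. Net effect: your route is marginally more self-contained (no auxiliary Lagrangian), at the cost of redoing the two lemmas and handling the portmanteau truncation by hand, whereas the paper's $L^R$ device buys a shorter proof by reduction to the already established bounded case.
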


Again we also have a compactness result: 
\begin{theo}\label{theo:compactness-Eukl}
Let $b - a \le \sqrt{\frac{c_1}{4 c_2}}$. If $\pi^{(N)}$ is a sequence of probability measures such that the measures $\pr_a \# \pi^{(N)}$ are supported on a common compact set and the corresponding sequence $\mathbb{A}(\pi^{(N)})$ of actions is bounded, then $\pi^{(N)}$ is relatively compact with respect to weak convergence. 
\end{theo}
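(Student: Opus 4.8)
The plan is to mimic the proof of Theorem \ref{theo:compactness}, with the only new difficulty being that $\mathcal{A}$ need no longer be bounded from below. By Prohorov's theorem it suffices to establish tightness of $\pi^{(N)}$, and the key observation is that under (iii') and the hypothesis $b-a \le \sqrt{c_1/(4c_2)}$ one still obtains an \emph{a priori} lower bound on $\mathcal{A}(\gamma)$ in terms of $d_M(\gamma(a),x_0)$, together with a coercivity statement of the form of Lemma \ref{lemma:A-coercive} for this class of Lagrangians. So the real content is to prove the analogue of Lemma \ref{lemma:A-coercive}: for every compact $K \subset M$ and every $C>0$, the set $\{\gamma : \gamma(a) \in K,\ \mathcal{A}(\gamma) \le C\}$ is relatively compact in $C([a,b];M)$.

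To get that analogue, first I would estimate, for $\gamma \in C^{ac}([a,b];M)$ with $\gamma(a)\in K$, using (iii') and the elementary bound $d_M(\gamma(t),x_0) \le d_M(\gamma(a),x_0) + \int_a^t |\dot\gamma|$: writing $R_0 = \max_{x\in K} d_M(x,x_0)$ and $D = \sup_t d_M(\gamma(t),x_0)$, one has $d_M(\gamma(t),x_0) \le R_0 + (b-a)^{1/2}\big(\int_a^b|\dot\gamma|^2\big)^{1/2}$ by Cauchy--Schwarz, while $\mathcal{A}(\gamma) \ge c_1 \int_a^b |\dot\gamma|^2 - c_2(b-a)(1+D^2)$. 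Combining these, with $E := \int_a^b |\dot\gamma|^2$, gives $C \ge c_1 E - c_2(b-a)\big(1 + 2R_0^2 + 2(b-a)E\big)$, i.e. $C + c_2(b-a)(1+2R_0^2) \ge \big(c_1 - 2c_2(b-a)^2\big)E$. The condition $b-a \le \sqrt{c_1/(4c_2)}$ ensures $c_1 - 2c_2(b-a)^2 \ge c_1/2 > 0$, so $E$ is bounded by a constant depending only on $K$, $C$, $c_1$, $c_2$ and $b-a$; consequently $D$ is bounded, hence by Hopf--Rinow all such $\gamma$ take values in a fixed compact $K'$, and the $W^{1,2}$-bound on $\gamma$ gives equicontinuity (a $\sqrt{t_2-t_1}$ modulus exactly as in Lemma \ref{lemma:A-coercive}). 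Arzel\`a--Ascoli then yields relative compactness.

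With this coercivity lemma in hand, the tightness argument for $\pi^{(N)}$ is almost verbatim that of Theorem \ref{theo:compactness}. Given $\mathbb{A}(\pi^{(N)}) \le C$ and a common compact $K$ with $\pr_a\#\pi^{(N)}(K)=1$, I would split $\mathcal{A} = \mathcal{A}_+ - \mathcal{A}_-$ where the negative part is controlled: since $\gamma(a)\in K$, the bound from the previous paragraph shows $\mathcal{A}(\gamma) \ge -c_2(b-a)(1+2R_0^2) =: -b_0$ on the relevant set of curves, uniformly. Hence $\pi^{(N)}\big(\{\gamma : \gamma(a)\in K,\ \mathcal{A}(\gamma) \ge (C+b_0)/\eps\}\big) \le \eps$ by the same Chebyshev-type estimate, and the coercivity lemma converts the bound $\mathcal{A}(\gamma) \le (C+b_0)/\eps$ (together with $\gamma(a)\in K$) into membership in a fixed compact $K_\eps \subset C([a,b];M)$, giving $\pi^{(N)}(K_\eps)\ge 1-\eps$ for all $N$.

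The main obstacle is purely the first step: making sure the smallness hypothesis on $b-a$ is exactly what is needed to absorb the $d_M^2(\gamma(t),x_0)$ term in (iii') against the kinetic term, i.e.\ getting the sign $c_1 - 2c_2(b-a)^2 > 0$ right with a clean constant. Everything downstream (Hopf--Rinow, the equicontinuity modulus, the Chebyshev estimate, Prohorov) is routine and already appears in the bounded-energy case. One should also note that, unlike in Section \ref{section:Limits}, the uniform lower bound $-b_0$ depends on the compact set $K$ supporting the initial marginals, which is harmless here since that compact set is fixed by hypothesis.
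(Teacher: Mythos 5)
Your proof is correct, and its overall architecture is the paper's: a uniform lower bound for $\mathcal{A}$ on curves starting in the common compact set, a Chebyshev-type estimate giving $\pi^{(N)}(\{\mathcal{A} \ge c_\eps\}) \le \eps$, a coercivity statement turning the complementary sublevel set into a fixed compact subset of $C([a,b];M)$, and Prohorov. The one place you genuinely diverge is the coercivity step. You prove the unbounded-potential analogue of Lemma \ref{lemma:A-coercive} from scratch, absorbing $c_2 d_M^2(\gamma(t),x_0)$ into the kinetic term via $c_1 - 2c_2(b-a)^2 \ge c_1/2$ and then running Hopf--Rinow plus Arzel\`a--Ascoli; note that the quantitative estimate you derive (the bound on $\int_a^b |\dot\gamma|^2$ and the confinement $d_M(\gamma(t),x_0)\le c$ in terms of $K$, $C$) is exactly Lemma \ref{lemma:bounded-traj}, which you could simply cite instead of rederiving. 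The paper instead reuses machinery already in place: Lemma \ref{lemma:bounded-traj} confines the relevant sublevel curves to a ball of radius $c(\eps)$, and on such curves $\mathcal{A}$ coincides with the action $\mathcal{A}^R$ of the truncated Lagrangian $L^R = L + \theta_R$ introduced in the proof of Theorem \ref{theo:Gamma-convergence-Eukl}, which satisfies the original assumptions (i)--(iii); hence the original Lemma \ref{lemma:A-coercive} applies verbatim to produce $K_\eps$. Your route is a bit more self-contained and in fact slightly simpler at one point, since (iii') gives quadratic growth directly and the superlinearity-on-compacts argument inside Lemma \ref{lemma:A-coercive} is not needed; the paper's route avoids re-proving any compactness lemma and keeps the truncation $L^R$ as the single device handling unboundedness in both the liminf and the compactness proofs. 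Your uniform lower bound $-b_0 = -c_2(b-a)(1+2R_0^2)$ and threshold $(C+b_0)/\eps$ are the exact analogue of the paper's constant $\tilde C/\eps$, so the tightness conclusion stands as you state it.
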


As a conclusion to Theorems \ref{theo:Gamma-convergence-Eukl} and \ref{theo:compactness-Eukl} we remark that, under the additional assumption that $b - a \le \sqrt{\frac{c_1}{4 c_2}}$, Corollary \ref{cor:Minimizers-converge} applies verbatim to the systems considered in this section. Also Theorems \ref{theo:flow} and \ref{theo:stationary-convergence} remain valid for these systems if $b - a \le \sqrt{\frac{c_1}{4 c_2}}$. This follows in a straightforward manner from the arguments in Section \ref{section:Limits} using that Lemma \ref{lemma:bounded-traj} below bounds trajectories by their value of the action functional. (Note that the constant $\sqrt{\frac{c_1}{4 c_2}}$ is not sharp.)

It is well known that compactness may be lost on long time intervals, rendering the search for action minimizers meaningless: 

\begin{ex}\label{ex:cpt-loss} For $M = \R$, $L(x, v) = \frac{m}{2} v^2 - \frac{c}{2} x^2$ with $c > m > 0$ and $a = 0$, $b = \sqrt{\frac{m}{c}} \pi$, for every $\alpha \in \R$ the curve $\gamma(t) = \alpha \sin(\sqrt{\frac{c}{m}} t)$ is an extremal connecting $0$ to itself with ${\cal A}(\gamma) = \frac{\alpha^2}{4}(\sqrt{mc} - c)$, which diverges to $-\infty$ as $\alpha \to \infty$. 
\end{ex} 

On the other hand, under suitable local bounds on the action, we may still formulate a version of Theorem \ref{theo:stationary-convergence} for unbounded potentials on long time intervals if the Euler-Lagrange flow $\phi^L_t$ is complete. 

\begin{theo}\label{theo:stationary-convergence-Eukl-long} 
Suppose that the Euler-Lagrange flow $\phi^L_t$ is complete. Let $\gamma^{(N)} \in C([a, b]; M^N)$ be a sequence of $N$-tuples of solutions of the Euler-Lagrange equation with $\gamma^{(N)}(a) \in K_a^N$, $\gamma^{(N)}(b) \in K_b^N$ for compact sets $K_a, K_b \subset M$. Assume that there is a constant $C > 0$ such that 
$$ {\cal A}_N(\gamma^{(N)}|_{[a, b']}) 
   = \frac{1}{N} \sum_{i = 1}^N \int_{a}^{b'} L(\gamma^{(N)}_i(t), \dot{\gamma}^{(N)}_i(t)) \, dt 
   \le C $$
for some $b' > a$ with $b' \le \min\{b, \sqrt{\frac{c_1}{4 c_2}}\}$. Then $\pi^{(N)} =  \frac{1}{N} \sum_{i = 1}^N \delta_{\gamma_i^{(N)}}$ admits a subsequence (not relabeled) such that $\pi^{(N)} \weakly \pi$ for some $\pi \in {\cal P}(C([a,b]; M))$. $\pi$ is supported on $\Gamma_{K_a, K_b}$ and there is a measure $\eta$ on $TM$ such that $\pi = \pi_M \phi^L_{\cdot - a} \# \eta$. 
\end{theo}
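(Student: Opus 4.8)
The plan is to follow the strategy of the proof of Theorem~\ref{theo:stationary-convergence}, but to carry out all quantitative estimates on the short sub-interval $[a,b']$ — where the action is controlled — and then to use completeness of $\phi^L_t$ to transport the resulting phase-space bounds to the whole interval $[a,b]$ and to reconstruct the trajectories there. The first ingredient is a short-time coercivity estimate, which is Lemma~\ref{lemma:bounded-traj} (the analogue of Lemma~\ref{lemma:A-coercive} under (iii')): for $\gamma\in C^{ac}([a,b'];M)$ with $\gamma(a)\in K_a$ one has $d_M^2(\gamma(t),x_0)\le 2d_M^2(\gamma(a),x_0)+2(b'-a)\int_a^{b'}|\dot\gamma|^2\,dt$, so that, since $b'-a\le\sqrt{c_1/(4c_2)}$ forces $2c_2(b'-a)^2\le c_1/2$,
$$ \int_a^{b'} L(\gamma(t),\dot\gamma(t))\,dt \ge \frac{c_1}{2}\int_a^{b'}|\dot\gamma(t)|^2\,dt - \Lambda $$
with $\Lambda=\Lambda(K_a)>0$; in particular, for each $c'>0$ the set $\{\gamma|_{[a,b']}:\gamma(a)\in K_a,\ \int_a^{b'}L\le c'\}$ is relatively compact in $C([a,b'];M)$, with a uniform bound $\int_a^{b'}|\dot\gamma|^2\le R(c')$.

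An elementary averaging argument then singles out a large fraction of well-behaved particles. Fix $\eps>0$ and set $c_\eps:=(C+\Lambda)/\eps$. Since $\frac{1}{N}\sum_{i=1}^N\big(\int_a^{b'}L(\gamma_i^{(N)},\dot\gamma_i^{(N)})+\Lambda\big)\le C+\Lambda$ with every summand nonnegative, at least $(1-\eps)N$ indices, forming a set $I_\eps^N$, satisfy $\int_a^{b'}L(\gamma_i^{(N)},\dot\gamma_i^{(N)})\le c_\eps$. For $i\in I_\eps^N$ the curve $\gamma_i^{(N)}|_{[a,b']}$ stays in a fixed compact subset of $M$ and has $\int_a^{b'}|\dot\gamma_i^{(N)}|^2\le R_\eps$, so there is $t_i\in[a,b']$ with $|\dot\gamma_i^{(N)}(t_i)|^2\le R_\eps/(b'-a)$; hence the phase-space point $(\gamma_i^{(N)}(t_i),\dot\gamma_i^{(N)}(t_i))$ lies in a fixed compact set $\mathcal K_\eps\subset TM$.

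Now put $(x_i^{(N)},v_i^{(N)}):=(\gamma_i^{(N)}(a),\dot\gamma_i^{(N)}(a))$ and $\eta^{(N)}:=\frac{1}{N}\sum_{i=1}^N\delta_{(x_i^{(N)},v_i^{(N)})}\in{\cal P}(TM)$. For $i\in I_\eps^N$ we have $(x_i^{(N)},v_i^{(N)})=\phi^L_{a-t_i}(\gamma_i^{(N)}(t_i),\dot\gamma_i^{(N)}(t_i))$ with $a-t_i\in[-(b'-a),0]$, and since $\phi^L$ is complete and continuous the image of the compact set $\mathcal K_\eps\times[-(b'-a),0]$ is a compact set $\tilde{\mathcal K}_\eps\subset TM$ containing all these points; hence $\eta^{(N)}(\tilde{\mathcal K}_\eps)\ge 1-\eps$ for all $N$. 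So $(\eta^{(N)})$ is tight and, by Prohorov's theorem, $\eta^{(N)}\weakly\eta$ along a subsequence. Each $\gamma_i^{(N)}$ solves the Euler-Lagrange equation on $[a,b]$ with data $(x_i^{(N)},v_i^{(N)})$ at $t=a$, so by uniqueness and completeness $\gamma_i^{(N)}=\pi_M\phi^L_{\cdot-a}(x_i^{(N)},v_i^{(N)})$ and therefore $\pi^{(N)}=\pi_M\phi^L_{\cdot-a}\#\eta^{(N)}$. Since $\pi_M\phi^L_{\cdot-a}:TM\to C([a,b];M)$ is continuous (as in Section~\ref{subsection:Lagrangian-systems-many-body}), the push-forward is weakly continuous, so $\pi^{(N)}\weakly\pi:=\pi_M\phi^L_{\cdot-a}\#\eta$ along the subsequence; in particular $\pi^{(N)}$ is relatively compact. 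Finally $\eta^{(N)}$ is supported on the closed set $F:=\pi_M^{-1}(K_a)\cap(\pi_M\phi^L_{b-a})^{-1}(K_b)$ (using $\gamma_i^{(N)}(a)\in K_a$, $\gamma_i^{(N)}(b)\in K_b$, and completeness so that $\pi_M\phi^L_{b-a}$ is globally defined and continuous); by the portmanteau theorem $\eta(F)=1$, and as $\pi_M\phi^L_{\cdot-a}$ maps $F$ into $\Gamma_{K_a,K_b}$ it follows that $\pi$ is concentrated on $\Gamma_{K_a,K_b}$, exactly as in the proof of Theorem~\ref{theo:stationary-convergence}.

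The main obstacle is that the action is only controlled in an averaged, per-particle sense and only on the sub-interval $[a,b']$: there is no a priori lower bound for the action of an individual curve (needed to run the averaging selection), and no estimate whatsoever is available on $[b',b]$. The remedy is the short-time coercivity of Lemma~\ref{lemma:bounded-traj} — whose proof closes precisely because $b'-a\le\sqrt{c_1/(4c_2)}$ makes the quadratic Gr\"onwall estimate above work — which pins down a phase-space point for a $(1-\eps)$-fraction of the particles, together with completeness of $\phi^L_t$, which both transports that control back to time $a$ and reconstructs the full trajectories on $[a,b]$. The remaining steps are routine adaptations of the proofs of Theorems~\ref{theo:compactness-Eukl} and \ref{theo:stationary-convergence}.
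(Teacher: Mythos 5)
Your proof is correct and follows essentially the same route as the paper: the short-time coercivity of Lemma \ref{lemma:bounded-traj} on $[a,b']$ plus a Markov-type averaging argument to control a $(1-\eps)$-fraction of the particles, tightness of the phase-space measures $\eta^{(N)}$ at time $a$, and completeness of $\phi^L_t$ to write $\pi^{(N)}=\pi_M\phi^L_{\cdot-a}\#\eta^{(N)}$ and pass to the limit. The only cosmetic differences are that you re-derive the velocity bound directly (choosing a good time $t_i$ and flowing back) where the paper cites Lemmas \ref{lemma:stationary-prep} and \ref{lemma:bounded-traj}, and that you obtain the concentration on $\Gamma_{K_a,K_b}$ by a portmanteau argument on the level of $\eta$ rather than of $\pi$.
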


\subsubsection*{Proofs of Theorems \ref{theo:Gamma-convergence-Eukl}, \ref{theo:compactness-Eukl} and \ref{theo:stationary-convergence-Eukl-long}}

We begin by estimating intermediate particle positions by the action. 
\begin{lemma}\label{lemma:bounded-traj}
Let $b - a \le \sqrt{\frac{c_1}{4 c_2}}$. For every compact set $K \subset M$ and any $C > 0$ there exists a constant $c = c(c_1, c_2, K, C)$ such that for all $\gamma \in C^{ac}([a, b]; M)$ with $\gamma(a) \in K$ and ${\cal A}(\gamma) \le C$ 
$$ d_M(\gamma(t), x_0) \le c \quad \forall \, t \in [a, b]. $$
\end{lemma}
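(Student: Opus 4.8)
The plan is to exploit the growth condition (iii') together with the smallness of the time interval to absorb the bad term $d_M^2(\gamma(t),x_0)$ into the good term $c_1|\dot\gamma|^2$. First I would set $f(t) := d_M(\gamma(t),x_0)$ and observe that since $\gamma$ is absolutely continuous, $f$ is absolutely continuous with $|\dot f(t)| \le |\dot\gamma(t)|$ a.e. Let $R := \max_{x \in K} d_M(x,x_0)$, so $f(a) \le R$. For any $t \in [a,b]$ we have $f(t) \le R + \int_a^t |\dot\gamma(s)|\,ds \le R + (b-a)^{1/2}\big(\int_a^b|\dot\gamma|^2\,ds\big)^{1/2}$ by Cauchy--Schwarz, and hence $\sup_{t\in[a,b]} f(t)^2 \le 2R^2 + 2(b-a)\int_a^b |\dot\gamma|^2\,ds$.

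Next I would feed this back into the energy bound. From ${\cal A}(\gamma)\le C$ and (iii'),
\begin{align*}
C \ge \int_a^b L(\gamma,\dot\gamma)\,dt
 \ge c_1\int_a^b |\dot\gamma|^2\,dt - c_2(b-a) - c_2\int_a^b f(t)^2\,dt
 \ge c_1\int_a^b |\dot\gamma|^2\,dt - c_2(b-a) - c_2(b-a)\sup_{t}f(t)^2.
\end{align*}
Substituting the bound $\sup_t f(t)^2 \le 2R^2 + 2(b-a)\int_a^b|\dot\gamma|^2$ gives
\begin{align*}
C \ge \big(c_1 - 2c_2(b-a)^2\big)\int_a^b |\dot\gamma|^2\,dt - c_2(b-a) - 2c_2(b-a)R^2.
\end{align*}
The hypothesis $b-a \le \sqrt{c_1/(4c_2)}$ yields $2c_2(b-a)^2 \le c_1/2$, so the coefficient of $\int|\dot\gamma|^2$ is at least $c_1/2 > 0$. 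Therefore $\int_a^b|\dot\gamma|^2\,dt \le \frac{2}{c_1}\big(C + c_2(b-a) + 2c_2(b-a)R^2\big)=:E$, a constant depending only on $c_1,c_2,K,C$ (and $b-a$, which is itself controlled). Plugging this into $\sup_t f(t)^2 \le 2R^2 + 2(b-a)E$ gives the desired bound $d_M(\gamma(t),x_0)\le c$ with $c := \big(2R^2 + 2(b-a)E\big)^{1/2}$.

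The only subtlety — and the one point that deserves care rather than being routine — is the a.e.\ inequality $|\dot f| \le |\dot\gamma|$ and the resulting absolute continuity of $t\mapsto d_M(\gamma(t),x_0)$ on a general complete Riemannian manifold; this follows from the fact that $d_M(\cdot,x_0)$ is $1$-Lipschitz and the composition of a Lipschitz function with an absolutely continuous curve is absolutely continuous, with the chain-rule bound holding a.e. Everything else is Cauchy--Schwarz and the arithmetic of absorbing constants, and the choice of threshold $\sqrt{c_1/(4c_2)}$ is exactly what makes the absorption work (with room to spare, consistent with the paper's remark that the constant is not sharp).
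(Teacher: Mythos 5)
Your argument is correct and follows essentially the same route as the paper: bound $d_M(\gamma(t),x_0)$ via the triangle inequality and Cauchy--Schwarz in terms of $\int_a^b|\dot\gamma|^2$, substitute into the growth condition (iii'), and use $b-a\le\sqrt{c_1/(4c_2)}$ to absorb the quadratic distance term so that $c_1-2c_2(b-a)^2\ge c_1/2$, yielding first a bound on $\int_a^b|\dot\gamma|^2$ and then on the trajectory. The only cosmetic difference is that you bound $\sup_t d_M^2(\gamma(t),x_0)$ directly where the paper bounds $\int_a^b d_M^2(\gamma(t),x_0)\,dt$, which is the same estimate up to a factor $(b-a)$.
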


\begin{proof}
As 
\begin{align*} 
  \int_a^b d_M^2(\gamma(t), x_0) \, dt 
  &\le \int_a^b 2 d_M^2(\gamma(a), x_0) + 2 d_M^2(\gamma(a), \gamma(t)) \, dt \\ 
  &\le 2 (b - a) d_M^2(\gamma(a), x_0) + 2 (b - a) \left( \int_a^b |\dot{\gamma}(s)| \, ds \right)^2 \\ 
  &\le 2 (b - a) d_M^2(\gamma(a), x_0) + 2 (b - a)^2 \int_a^b |\dot{\gamma}(s)|^2 \, ds, 
\end{align*}
by Assumption (iii') on $L$ we have 
\begin{align*}
  {\cal A}(\gamma) 
  &\ge c_1 \int_a^b |\dot{\gamma}(t)|^2 \, dt - c_2 (b - a) - c_2 \int_a^b d_M^2(\gamma(t), x_0) \, dt \\ 
  &\ge \left( c_1 - 2 c_2 (b - a)^2 \right) \int_a^b |\dot{\gamma}(t)|^2 \, dt - c_2 (b - a) (1 + 2 d_M^2(\gamma(a), x_0)) \\ 
  &\ge \frac{c_1}{2}\int_a^b |\dot{\gamma}(t)|^2 \, dt - \sqrt{c_1 c_2} (1 + d_M^2(\gamma(a), x_0)). 
\end{align*}  
But then for every $t \in [a, b]$ 
\begin{align*}
  d_M(\gamma(t), \gamma(a)) 
  &\le \int_a^b |\dot{\gamma}(t)| \, dt 
  \le \sqrt{b - a} \left( \int_a^b |\dot{\gamma}(t)|^2 \, dt \right)^{\frac{1}{2}} \\ 
  &\le \left( \frac{2 (b - a)}{c_1} {\cal A}(\gamma) + 2 \sqrt{\frac{c_2}{c_1}} (1 + d_M^2(\gamma(a), x_0)) (b - a) \right)^{\frac{1}{2}}, 
\end{align*}
and so $d_M(\gamma(t), x_0) \le d_M(\gamma(a), x_0) + d_M(\gamma(a), \gamma(t)) \le c$ for $c = c(c_1, c_2, K, C)$ sufficiently large. 
\end{proof}

\begin{proof}[Proof of Theorem \ref{theo:Gamma-convergence-Eukl}]
(i) For $R > 0$ let $\theta_R \in C^{\infty}(M; \R_+)$ with $\theta_R(x) = 0$ for $d_M(x, x_0) \le R$ and $\theta_R(x) \ge c_2(1 + |x|^2)$ for $d_M(x, x_0) \ge 2R$ and define the Lagrangian $L^{R}(x, v) = L(x, v) + \theta_R(x)$. Then $L^R$ satisfies Assumptions (i), (ii) and (iii) from Section \ref{section:Lagrangian-systems}. Denote the induced action functionals by ${\cal A}^R$, ${\cal A}_N^R$, $\mathbb{A}^R_{\mu_a, \mu_b}$ and $\mathbb{A}^R_{\mu_a^{(N)}, \mu_b^{(N)}}$, respectively. 

For given $k \in \N$, by Lemma \ref{lemma:bounded-traj} we may choose $R$ so large that for each $\gamma \in C([a, b]; M)$ with $\gamma(a) \in \overline{\bigcup_{N \in \N} \supp\, \mu_a^{(N)}}$ we have 
$$ {\cal A}(\gamma) \le k \implies d_M(\gamma(t), x_0) \le c \le R \implies {\cal A}^R(\gamma) = {\cal A}(\gamma). $$
But then ${\cal A}(\gamma) \ge \min\{k, {\cal A}^R(\gamma)\}$ for every such $\gamma$. 

Now assume that $\pi^{(N)} \weakly \pi$ in ${\cal P}(C([a, b]; M))$ with uniformly bounded $\mathbb{A}^R_{\mu_a^{(N)}, \mu_b^{(N)}}(\pi^{(N)})$. Then as in the proof of Theorem \ref{theo:Gamma-convergence}, $\pr_t \# \pi = \mu_t$ for $t \in \{a, b\}$. By Lemma \ref{lemma:A-lsc} ${\cal A}^R$ is lower semicontinuous and so is $\gamma \mapsto \min\{k, {\cal A}^R(\gamma)\}$, so the portmanteau theorem gives  
\begin{align*}
  \liminf_{N \to \infty} \int {\cal A}(\gamma) \, d\pi^{(N)}(\gamma) 
  &\ge \liminf_{N \to \infty} \int \min\{k, {\cal A}^R(\gamma)\} \, d\pi^{(N)}(\gamma) \\ 
  &\ge \int \min\{k, {\cal A}^R(\gamma)\} \, d\pi(\gamma) \\ 
  &\ge \int \min\{k, {\cal A}(\gamma)\} \, d\pi(\gamma). 
\end{align*}
As $k$ was arbitrary, by the monotone convergence theorem we obtain 
\begin{align*}
  \liminf_{N \to \infty} \int {\cal A}(\gamma) \, d\pi^{(N)}(\gamma) 
  \ge \int {\cal A}(\gamma) \, d\pi(\gamma). 
\end{align*}
in the limit $k \to \infty$. 

(ii) For the construction of a recovery sequence of a given measure $\pi \in {\cal P}(C([a, b]; M))$ it suffices to note that all the measures $\pi^{(N)}$ of the recovery sequence obtained in the proof of Theorem \ref{theo:Gamma-convergence} are supported on curves with values in a common compact set, so that with $\mathbb{A}^R$ as above for $R$ sufficiently large 
\begin{align*}
  \limsup_{N \to \infty} \mathbb{A}_{\mu_a^{(N)}, \mu_b^{(N)}}(\pi^{(N)}) 
  &= \limsup_{N \to \infty} \mathbb{A}_{\mu_a^{(N)}, \mu_b^{(N)}}^R(\pi^{(N)}) \\ 
  &\le \mathbb{A}_{\mu_a, \mu_b}^R(\pi) + O(\eps) 
   = \mathbb{A}_{\mu_a, \mu_b}(\pi) + O(\eps) 
\end{align*}
with arbitrary $\eps > 0$ as in the proof of Theorem \ref{theo:Gamma-convergence}. 
\end{proof}

\begin{proof}[Proof of Theorem \ref{theo:compactness-Eukl}] 
This follows along the lines of the proof of Theorem \ref{theo:compactness}: As ${\cal A}(\gamma) \le C$ by Lemma \ref{lemma:bounded-traj} implies that $d_M(\gamma(t), x_0) \le c$ for $\pi^{(N)}$-a.e.\ $\gamma$, ${\cal A}$ by Assumption (iii') is bounded from below on the joint support of the $\pi^{(N)}$. As in the proof of Theorem \ref{theo:compactness} this implies that for given $\eps > 0$ there is a constant $\tilde{C} > 0$ such that 
$$ \pi^{(N)} \left( \left\{ \gamma \in C([a, b]; M) : {\cal A}(\gamma) \ge \frac{\tilde{C}}{\eps} \right\} \right) 
   \le \eps. $$ 
Now for $\pi^{(N)}$-a.e.\ $\gamma$ we have $d_M(\gamma(t), x_0) \le c(\eps)$ if ${\cal A}(\gamma) \le \frac{\tilde{C}}{\eps}$ again by Lemma \ref{lemma:bounded-traj}. So with ${\cal A}^R$ as in the proof of Theorem \ref{theo:Gamma-convergence-Eukl} for $R$ sufficiently large ${\cal A}(\gamma) = {\cal A}^R(\gamma)$ and by Lemma \ref{lemma:A-coercive} there is a compact set $K_{\eps}$ with 
$$ \left\{ \gamma \in C([a, b]; M) : {\cal A}(\gamma) \le \frac{\tilde{C}}{\eps} \right\} 
   \subset \left\{ \gamma \in C([a, b]; M) : {\cal A}^R(\gamma) \le \frac{\tilde{C}}{\eps} \right\} 
   \subset K_{\eps} $$ 
and therefore $\pi^{(N)}(K_{\eps}) \ge \pi^{(N)}(\{ \gamma \in C([a, b]; M) : {\cal A}(\gamma) \le \frac{\tilde{C}}{\eps}\}) \ge 1 - \eps$. The sequence $\pi^{(N)}$ is thus tight. 
\end{proof} 

\begin{proof}[Proof of Theorem \ref{theo:stationary-convergence-Eukl-long}] 
Invoking Lemma \ref{lemma:bounded-traj} we may argue as in the proof of Theorems \ref{theo:stationary-convergence} and \ref{theo:compactness-Eukl} to find some $c_{\eps} > 0$ with 
\begin{align}\label{eq:piNA-Absch-Eukl}
  \pi^{(N)} \left( \left\{ \gamma \in C([a, b]; M) : {\cal A}(\gamma|_{[a, b']}) \ge c_{\eps} \right\} \right) 
  \le \eps. 
\end{align}
Similarly as in the proof of Theorem \ref{theo:flow}\eqref{item:flow} setting $v_i := \dot{\gamma}_i(a)$ so that 
$$ \gamma_i = \phi^L_{\cdot - a}(x_i, v_i) $$ 
for $\gamma^{(N)} = (\gamma_1, \ldots, \gamma_N)$, the measures 
$$ \eta^{(N)} 
   = \frac{1}{N} \sum_{i = 1}^N \delta_{(x_i, v_i)} $$
on $TM$ satisfy $\pi^{(N)} = \pi_M \phi^L_{\cdot - a} \# \eta^{(N)}$. 

Applying Lemmas \ref{lemma:stationary-prep} and \ref{lemma:bounded-traj} on the time interval $[a, b']$ we find $c = c(\eps) > 0$ such that $|v_i| \le c$ for all $i$ with ${\cal A}(\gamma_i) \le c_{\eps}$, so that 
$$ \eta^{(N)}(\{(x, v) \in TM : x \in K_a,~ |v| \le c\}) 
   \ge 1 - \eps $$ 
by \eqref{eq:piNA-Absch-Eukl}. Thus being tight, there exists a subsequence (not relabeled) such that $\eta^{(N)} \weakly \eta$. But then also $\pi_M \phi^L_{\cdot - a} \# \eta^{(N)} \weakly \pi_M \phi^L_{\cdot - a} \# \eta$ as measures on $C([a, b]; M)$ and hence $\pi = \pi_M \phi^L_{\cdot - a} \# \eta$. 
\end{proof} 

\section{Numerical schemes}\label{section:Numerics}

In this section we show how our results can be applied to investigate convergence properties of optimal transportation meshfree methods as described in \cite{LiHabbalOrtiz}. In such a scheme also the time is discretized, and we need to extend our analysis to a set-up where simultaneously the time step converges to $0$ while the number of particles tends to infinity. To this end, by identifying the time discretized system as a perturbation of the systems analyzed in Sections \ref{section:Limits} and \ref{section:Unbounded-energy} we will be able to reduce to our earlier results. While such a reduction is possible for many numerical quadrature schemes, by way of example we concentrate on the midpoint rule here. For the sake of simplicity we also restrict our analysis to Lagrangians $L : \R^n \times \R^n \to \R$ of the form  
$$ L(x, v) = \frac{m}{2} |v|^2 - V(x) $$ 
with $m > 0$, where $V \in C^{\infty}(\R^n)$ satisfies $|V(x)| \le c_2(1 + |x|^2)$ for some $c_2 > 0$. Note that by conservation of the energy $\frac{m}{2} |\dot{\gamma}(t)|^2 + V(\gamma(t))$ for solutions $\gamma$ of the Euler-Lagrange equation, in particular one has $|\dot{\gamma}(t)| \le C(1 + |\dot{\gamma}(t)|)$ so that by Gronwall's inequality the associated Euler-Lagrange flow is complete. 

For a given triangulation ${\cal T}_h = \{(\tau_{j - 1}, \tau_j) : j = 1, \ldots, l\}$ of the time interval $(a, b)$, where $a = \tau_0 < \tau_1 < \ldots < \tau_l = b$ with $|\tau_j - \tau_{j - 1}| \le h$, the corresponding action ${\cal A}$ shall be discretized by the midpoint rule as 
$$ {\cal A}^{[h]}(\gamma) 
   = \sum_{j = 1}^{l} \frac{m}{2} \frac{|\gamma(\tau_j) - \gamma(\tau_{j-1})|^2}{\tau_j - \tau_{j-1}} - V \left( \frac{\gamma(\tau_j) + \gamma(\tau_{j-1})}{2} \right) (\tau_j - \tau_{j - 1}). $$ 
In fact, ${\cal A}^{[h]}$ is a variational integrator only depending on the nodal values $\gamma(\tau_j)$. As it is convenient, we may and will assume that ${\cal A}^{[h]}(\gamma)$ is finite only for $\gamma$ piecewise affine subordinate to ${\cal T}_h$, in which case we may also write 
$$ {\cal A}^{[h]}(\gamma) 
   = \int_a^b \frac{m}{2} |\dot{\gamma}(t)|^2 \, dt - \sum_{j = 1}^{l} V \left( \gamma \left(\frac{\tau_j + \tau_{j-1}}{2} \right) \right) (\tau_j - \tau_{j - 1}). $$ 

\subsubsection*{Statement of the main results}

Suppose $\mu_a, \mu_b, \mu_a^{(N)}, \mu_b^{(N)}$ are as in Section \ref{section:Limits} and let $\mathbb{A}^{[h]}$, $\mathbb{A}_{\mu_a, \mu_b}^{[h]}$ and $\mathbb{A}^{[h]}_{\mu_a^{(N)}, \mu_b^{(N)}}$ denote the time discretized action functionals on ${\cal P}(C([a, b]; \R^n))$ obtained by replacing ${\cal A}$ by ${\cal A}^{[h]}$. 
\begin{theo}\label{theo:Gamma-convergence-Numerics} 
Suppose $V$ is bounded or $b - a \le \sqrt{\frac{m}{32 c_2}}$. Let $h_N > 0$ with $h_N \to 0$ as $N \to \infty$. With the notation of Theorem \ref{theo:Gamma-convergence}, if $\mu_a^{(N)} \weakly \mu_a$ and $\mu_b^{(N)} \weakly \mu_b$, then $\mathbb{A}^{[h_N]}_{\mu_a^{(N)}, \mu_b^{(N)}}$ $\Gamma$-converges to $\mathbb{A}_{\mu_a, \mu_b}$ on ${\cal P}(C([a, b]; \R^n))$ with respect to weak convergence. 
\end{theo}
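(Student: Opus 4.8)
The plan is to treat the time-discretized functional $\mathbb{A}^{[h_N]}$ as a perturbation of the continuum functional $\mathbb{A}$ already analyzed in Sections \ref{section:Limits} and \ref{section:Unbounded-energy}, reducing to those results. Write $\mathbb{A}_N^{[h]}$ for $\mathbb{A}^{[h]}_{\mu_a^{(N)}, \mu_b^{(N)}}$. First I would record the two elementary observations that drive everything. The kinetic term of ${\cal A}^{[h]}$ agrees with that of ${\cal A}$ on piecewise affine curves, since $\int_a^b \frac m2 |\dot\gamma|^2 \,dt = \sum_j \frac m2 \frac{|\gamma(\tau_j)-\gamma(\tau_{j-1})|^2}{\tau_j-\tau_{j-1}}$ for $\gamma$ affine on each $(\tau_{j-1},\tau_j)$; so the only discrepancy is the replacement of $\int_a^b V(\gamma(t))\,dt$ by the midpoint quadrature $\sum_j V(\gamma(\frac{\tau_j+\tau_{j-1}}2))(\tau_j-\tau_{j-1})$. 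Second, on any set of curves contained in a fixed ball of $W^{1,2}$ this quadrature error is uniformly $o(1)$ as $h\to 0$: since $V$ is Lipschitz on bounded sets and such curves are uniformly $\tfrac12$-Hölder (by Cauchy--Schwarz, as in Lemma \ref{lemma:A-coercive}), one gets $|\int_a^b V(\gamma)\,dt - \sum_j V(\gamma(\tfrac{\tau_j+\tau_{j-1}}2))(\tau_j-\tau_{j-1})| \le C h^{1/2}$ with $C$ depending only on the ball.

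For the $\liminf$-inequality, assume $\pi^{(N)}\weakly\pi$ with $\mathbb{A}_N^{[h_N]}(\pi^{(N)})$ bounded; this forces $\pr_t\#\pi^{(N)} = \mu_t^{(N)}$ and hence $\pr_t\#\pi=\mu_t$ for $t\in\{a,b\}$. The idea is to compare ${\cal A}^{[h]}$ from below with a truncated continuum functional just as in the proof of Theorem \ref{theo:Gamma-convergence-Eukl}. Using the coercivity estimate of Lemma \ref{lemma:bounded-traj} (valid here since $b-a\le\sqrt{m/(32c_2)}\le\sqrt{c_1/(4c_2)}$ with $c_1=m/2$ after absorbing the factor loss from the midpoint rule — this is where the slightly smaller threshold $\sqrt{m/(32c_2)}$ comes from), curves of bounded discretized action stay in a fixed compact set, so on the relevant part of configuration space ${\cal A}^{[h]}(\gamma)\ge {\cal A}(\gamma) - Ch^{1/2} \ge \min\{k,{\cal A}^R(\gamma)\} - Ch^{1/2}$ for the auxiliary confined Lagrangian $L^R$ of Theorem \ref{theo:Gamma-convergence-Eukl}. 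Then $\liminf_N \int {\cal A}^{[h_N]}\,d\pi^{(N)} \ge \liminf_N \int \min\{k,{\cal A}^R\}\,d\pi^{(N)} \ge \int\min\{k,{\cal A}^R\}\,d\pi \ge \int\min\{k,{\cal A}\}\,d\pi$ by lower semicontinuity of $\min\{k,{\cal A}^R\}$ (Lemma \ref{lemma:A-lsc}) and the portmanteau theorem; letting $k\to\infty$ and using monotone convergence gives $\liminf_N \mathbb{A}_N^{[h_N]}(\pi^{(N)}) \ge \mathbb{A}(\pi)$. When $V$ is bounded the truncation step is unnecessary and one argues directly.

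For the recovery sequence, I would take the very recovery sequence $\pi^{(N)} = \frac1N\sum_i\delta_{\gamma_{x_i}}$ constructed in the proof of Theorem \ref{theo:Gamma-convergence} (or \ref{theo:Gamma-convergence-Eukl}), then replace each curve $\gamma_{x_i}$ by its piecewise affine interpolant $\gamma_{x_i}^{[h_N]}$ on the mesh ${\cal T}_{h_N}$, leaving the endpoints $x_i, T(x_i)$ untouched so the marginal constraints are preserved. These curves all lie in a fixed compact subset of $C([a,b];\R^n)$ with uniformly bounded actions; the interpolation moves each curve by $O(h_N)$ in $d_\Gamma$ (since the $\gamma_{x_i}$ are uniformly $\tfrac12$-Hölder) so weak convergence to $\pi$ is preserved, and ${\cal A}^{[h_N]}(\gamma_{x_i}^{[h_N]}) \le {\cal A}(\gamma_{x_i}) + o(1)$ uniformly — affine interpolation does not increase the kinetic energy, and the midpoint quadrature error is $O(h_N^{1/2})$. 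Hence $\limsup_N \mathbb{A}_N^{[h_N]}(\pi^{(N)}) \le \mathbb{A}(\pi)$, and combined with the $\liminf$-inequality this yields $\lim_N \mathbb{A}_N^{[h_N]}(\pi^{(N)}) = \mathbb{A}(\pi)$ along a suitable diagonal sequence in the $\eps$-parameter.

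The main obstacle I anticipate is not any single estimate but the bookkeeping of the coercivity threshold: one must check that the midpoint-rule discretization does not destroy the lower bound on the action that underpins Lemma \ref{lemma:bounded-traj} and hence the compactness used in the $\liminf$-argument. Concretely, the discretized kinetic energy still controls $\int|\dot\gamma|^2$ for piecewise affine $\gamma$, but the potential term is now a Riemann sum whose nodes one bounds via $d_M(\gamma(t),x_0)$ at the midpoints; re-running the chain of inequalities in Lemma \ref{lemma:bounded-traj} with these sampled values (rather than the integral) forces the constant $2$ in $c_1 - 2c_2(b-a)^2$ to be replaced by a larger one, which is exactly why the hypothesis is the more restrictive $b-a\le\sqrt{m/(32c_2)}$ rather than $\sqrt{c_1/(4c_2)} = \sqrt{m/(8c_2)}$. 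Once this confinement-under-discretization bound is in hand, the rest is a routine combination of the portmanteau/lower-semicontinuity machinery of Section \ref{section:Limits} with the quadrature-error estimate above.
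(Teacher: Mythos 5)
Your plan follows essentially the same route as the paper's proof: a confinement estimate for curves of bounded discretized action (the paper's Lemma \ref{lemma:bounded-traj-neu}, which your final paragraph correctly singles out as the crux and which does go through under the stated threshold), comparison with the truncated Lagrangian $L^R$ combined with the portmanteau theorem and monotone convergence for the liminf-inequality, and piecewise affine interpolation of the recovery sequence from Theorem \ref{theo:Gamma-convergence} for the limsup-inequality. The only noteworthy deviation is that you bound the midpoint quadrature error by an $O(h^{1/2})$ Lipschitz/H\"older argument, whereas the paper's Lemma \ref{lemma:Ah-approx} gives $O(h^2)$ via Taylor expansion; both suffice for this theorem.
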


Again we also have a compactness result: 
\begin{theo}\label{theo:compactness-Numerics} 
Suppose $V$ is bounded or $b - a \le \sqrt{\frac{m}{32 c_2}}$. Let $h_N > 0$ with $h_N \to 0$ as $N \to \infty$. If $\pi^{(N)}$ is a sequence of probability measures such that the measures $\pr_a \# \pi^{(N)}$ are supported on a common compact set and the corresponding sequence $\mathbb{A}^{[h_N]}(\pi^{(N)})$ of actions is bounded, then $\pi^{(N)}$ is relatively compact with respect to weak convergence. 
\end{theo}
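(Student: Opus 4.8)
The plan is to run the argument of Theorems \ref{theo:compactness} and \ref{theo:compactness-Eukl} with the discretized action ${\cal A}^{[h_N]}$ in place of ${\cal A}$. The only genuinely new ingredient is a version of Lemma \ref{lemma:bounded-traj} for ${\cal A}^{[h]}$ that is \emph{uniform in the time step}: I would show that there is a constant $c = c(m, c_2, K, C)$ such that every piecewise affine curve $\gamma$ subordinate to any triangulation ${\cal T}_h$ of $(a,b)$ with $\gamma(a) \in K$ and ${\cal A}^{[h]}(\gamma) \le C$ satisfies $|\gamma(t)| \le c$ for all $t \in [a,b]$ and the equi-H\"older estimate $|\gamma(t_1) - \gamma(t_2)| \le c\sqrt{|t_1 - t_2|}$ for all $t_1, t_2 \in [a,b]$, with $c$ independent of $h$ (and of $\gamma$). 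If $V$ is bounded the smallness hypothesis on $b-a$ is not needed for this.

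To prove this claim I would use the representation ${\cal A}^{[h]}(\gamma) = \int_a^b \frac m2 |\dot\gamma(t)|^2\,dt - \sum_{j=1}^l V(\gamma(t_j^*))(\tau_j - \tau_{j-1})$ with $t_j^* = \frac12(\tau_j + \tau_{j-1})$, valid for piecewise affine $\gamma$. Since $|\gamma(t_j^*)| \le |\gamma(a)| + \int_a^b |\dot\gamma| \le |\gamma(a)| + \sqrt{b-a}\,(\int_a^b |\dot\gamma|^2)^{1/2}$, the growth bound $|V(x)| \le c_2(1 + |x|^2)$ gives $\sum_j |V(\gamma(t_j^*))|(\tau_j - \tau_{j-1}) \le c_2 (b-a)(1 + 2|\gamma(a)|^2) + 2 c_2 (b-a)^2 \int_a^b |\dot\gamma|^2$, exactly in the spirit of the proof of Lemma \ref{lemma:bounded-traj}. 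When $b - a \le \sqrt{m/(32 c_2)}$ the last term is dominated by, say, $\frac m4 \int_a^b |\dot\gamma|^2$, so ${\cal A}^{[h]}(\gamma) \ge \frac m4 \int_a^b |\dot\gamma|^2 - c_2(b-a)(1 + 2|\gamma(a)|^2)$; when $V$ is bounded one simply has ${\cal A}^{[h]}(\gamma) \ge \frac m2 \int_a^b |\dot\gamma|^2 - \|V\|_\infty (b-a)$. In either case $\int_a^b |\dot\gamma|^2$ is bounded by a constant depending only on $m, c_2, K, C$, and the asserted pointwise bound and equi-H\"older estimate follow by Cauchy--Schwarz as in Lemma \ref{lemma:bounded-traj}. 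In particular there is $C_0 = C_0(m, c_2, K)$ with ${\cal A}^{[h]}(\gamma) \ge -C_0$ for every $\gamma$ with $\gamma(a) \in K$, uniformly in $h$.

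Granting this, the conclusion follows as in Theorem \ref{theo:compactness}. Let $K$ be a compact set carrying all $\pr_a \# \pi^{(N)}$; then $\pi^{(N)}$-a.e.\ $\gamma$ satisfies $\gamma(a) \in K$, so ${\cal A}^{[h_N]}(\gamma) \ge -C_0$ and, since $\int {\cal A}^{[h_N]}\,d\pi^{(N)} = \mathbb{A}^{[h_N]}(\pi^{(N)}) \le C < \infty$, in fact ${\cal A}^{[h_N]}(\gamma) < \infty$ (hence $\gamma$ is piecewise affine subordinate to ${\cal T}_{h_N}$) for $\pi^{(N)}$-a.e.\ $\gamma$. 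Fixing $\eps > 0$, Chebyshev's inequality yields $\pi^{(N)}(S^{(N)}_\eps) \ge 1 - \eps$ with $S^{(N)}_\eps := \{\gamma : \gamma(a) \in K,\ {\cal A}^{[h_N]}(\gamma) \le C_\eps\}$ and $C_\eps := (C + C_0)/\eps - C_0$. By the uniform discrete estimate of the previous paragraph, $\bigcup_{N} S^{(N)}_\eps$ is equibounded and equi-H\"older (hence equicontinuous), so its closure $K_\eps$ in $C([a,b];\R^n)$ is compact by Arzel\`a--Ascoli, and $\pi^{(N)}(K_\eps) \ge 1 - \eps$ for every $N$. Thus $(\pi^{(N)})$ is tight and Prohorov's theorem gives relative compactness with respect to weak convergence. (As in Theorem \ref{theo:compactness-Eukl}, the hypothesis $h_N \to 0$ is not needed for this compactness statement; it enters only in the $\Gamma$-convergence Theorem \ref{theo:Gamma-convergence-Numerics}. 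One could alternatively reduce to Theorem \ref{theo:compactness-Eukl} via the truncated Lagrangians $L^R$ of Section \ref{section:Unbounded-energy}, using that ${\cal A}^{[h]}$ agrees with its truncated analogue on curves with values in a fixed ball.) The one delicate point is the $h$-uniformity in the second paragraph: the (possibly unbounded) midpoint potential contributions must be absorbed into the discrete kinetic energy without the constants degenerating as $h_N \to 0$, and this is precisely where the assumption $b - a \le \sqrt{m/(32 c_2)}$ (or boundedness of $V$) is used.
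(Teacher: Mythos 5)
Your proposal is correct, but its key step differs from the paper's. You re-derive an $h$-uniform coercivity estimate for ${\cal A}^{[h]}$ directly (bounding the midpoint values $\gamma(t_j^*)$ by $|\gamma(a)| + \int_a^b |\dot\gamma|$ and absorbing the quadratic potential growth into the kinetic term when $b-a \le \sqrt{m/(32c_2)}$), which is essentially the content of the paper's Lemma \ref{lemma:bounded-traj-neu}, though your computation avoids the auxiliary reparametrized curve $\tilde{\gamma}$ used there and is arguably simpler; either way one gets $\int_a^b |\dot\gamma|^2 \, dt \le c(1 + {\cal A}^{[h]}(\gamma))$ with $c$ independent of $h$. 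After the common Chebyshev step, however, the routes diverge: the paper passes from the sublevel set $\{{\cal A}^{[h_N]} \le \tilde{C}/\eps\}$ to a sublevel set of the truncated continuous action ${\cal A}^R$ via the quadrature error estimate of Lemma \ref{lemma:Ah-approx} (which is where ``sufficiently small $h$'', i.e.\ $h_N \to 0$, enters) and then invokes the coercivity Lemma \ref{lemma:A-coercive}, whereas you extract equiboundedness and a uniform H\"older-$\tfrac12$ modulus directly from the $h$-uniform kinetic-energy bound and conclude tightness by Arzel\`a--Ascoli and Prohorov. Your route buys two small things: it bypasses the quadrature error lemma entirely, and it makes explicit that $h_N \to 0$ is not needed for the compactness statement (in the paper's argument this hypothesis is used, though for the finitely many indices with $h_N$ not small tightness is automatic anyway); the paper's route, on the other hand, recycles the already established continuous-case machinery (Lemmas \ref{lemma:A-coercive} and \ref{lemma:Ah-approx}) with almost no new computation. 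Your Chebyshev constant and the absorption constant ($2c_2(b-a)^2 \le m/16 < m/4$) check out, so the argument is complete.
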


Again we note that, as a conclusion to Theorems \ref{theo:Gamma-convergence-Numerics} and \ref{theo:compactness-Numerics}, if $V$ is bounded or $b - a \le \sqrt{\frac{m}{32 c_2}}$, then Corollary \ref{cor:Minimizers-converge} remains true if $\mathbb{A}_{\mu_a^{(N)}, \mu_b^{(N)}}$ is replaced by $\mathbb{A}^{[h_N]}_{\mu_a^{(N)}, \mu_b^{(N)}}$. For a result on stationary points corresponding to Theorem \ref{theo:stationary-convergence}, however, we do have to argue differently, as the discretized trajectories only solve a discretized version of the Euler-Lagrange equations and, in particular, are not extremals of ${\cal A}$. (We also remark that the constant $\sqrt{\frac{m}{32 c_2}}$ is not sharp.) 

\begin{theo}\label{theo:stationary-convergence-Numerics} 
Let $\gamma^{(N)} \in C([a, b]; (\R^n)^N)$ be a sequence of stationary points for ${\cal A}^{[h_N]}_N$ with $\gamma^{(N)}(a) \in K_a^N$ for a compact set $K_a \subset \R^n$, where $h_N > 0$ with $h_N \to 0$ as $N \to \infty$. Assume that 
$$ {\cal A}^{[h_N]}_N(\gamma^{(N)}|_{[a, b']}) 
   \le C $$ 
for some constant $C$ and $b' > a$ with $b' - a \le \sqrt{\frac{m}{32 c_2}}$. Then there exists a subsequence of $\pi^{(N)} =  \frac{1}{N} \sum_{i = 1}^N \delta_{\gamma_i^{(N)}}$ (not relabeled) such that $\pi^{(N)} \weakly \pi$ for some $\pi \in {\cal P}(C([a,b]; \R^n))$. $\pi$ is supported on $\Gamma_{K_a, K_b}$ and there is a measure $\eta$ on $\R^{2n}$ such that $\pi = \pi_{\R^n} \phi^L_{\cdot - a} \# \eta$. 
\end{theo}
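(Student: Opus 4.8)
The plan is to realize the time-discretized problem as a small perturbation of the continuous Euler-Lagrange flow and then reduce to the argument of Theorem \ref{theo:stationary-convergence-Eukl-long}. First I would record the structure of the discretized stationarity conditions: since ${\cal A}^{[h_N]}_N$ is a sum over particles of nodal functionals, each trajectory $\gamma^{(N)}_i$ is piecewise affine subordinate to ${\cal T}_{h_N}$ and its nodal values $\gamma^{(N)}_i(\tau_j)$ satisfy the midpoint discrete Euler-Lagrange equations
$$ m \, \frac{\gamma_i(\tau_{j+1}) - 2\gamma_i(\tau_j) + \gamma_i(\tau_{j-1})}{\tau_j - \tau_{j-1}} = - \tfrac{1}{2}\nabla V\!\left(\tfrac{\gamma_i(\tau_j)+\gamma_i(\tau_{j-1})}{2}\right)(\tau_j-\tau_{j-1}) - \tfrac{1}{2}\nabla V\!\left(\tfrac{\gamma_i(\tau_{j+1})+\gamma_i(\tau_{j})}{2}\right)(\tau_{j+1}-\tau_{j}), $$
(assuming for notational simplicity a uniform step; the general case is the same). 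The key a priori estimate is an analogue of Lemma \ref{lemma:bounded-traj} for ${\cal A}^{[h_N]}$: using $|V(x)| \le c_2(1+|x|^2)$, the discrete energy controls $\int_a^{b'}|\dot\gamma_i|^2\,dt$ and hence $\sup_{[a,b']}|\gamma_i(t)|$ in terms of ${\cal A}^{[h_N]}(\gamma_i|_{[a,b']})$ and $|\gamma_i(a)|$, provided $b'-a$ is below the stated threshold $\sqrt{m/(32c_2)}$ (the extra factor compared to Section \ref{section:Unbounded-energy} comes from the midpoint sampling error $|\gamma(\tfrac{\tau_j+\tau_{j-1}}{2})| \le |\gamma(a)| + \int_a^{b'}|\dot\gamma|$, squared). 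This gives, exactly as in \eqref{eq:piNA-Absch-Eukl}, that for every $\eps>0$ there is $c_\eps$ with $\pi^{(N)}(\{\gamma : {\cal A}^{[h_N]}(\gamma|_{[a,b']}) \ge c_\eps\}) \le \eps$, and on the complement $\gamma_i$ stays in a fixed compact set $K$ with $\int_a^{b'}|\dot\gamma_i|^2 \le c_\eps'$.

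Next I would extract compactness and identify the limit. From the uniform $L^2$-bound on velocities of the "good" trajectories and the fact that they start in $K_a$, the set of such piecewise-affine curves is precompact in $C([a,b'];\R^n)$ (Arzelà–Ascoli as in Lemma \ref{lemma:A-coercive}); combined with the $\eps$-tightness this gives tightness of $\pi^{(N)}$ on $[a,b']$, hence a weakly convergent subsequence $\pi^{(N)} \weakly \pi$. To extend past $b'$ and to identify $\pi$ as flow-invariant, I would introduce the initial velocities of the *continuous* extremals through the discrete data. Concretely, for a good particle set $v^{(N)}_i$ equal to the discrete velocity $\frac{\gamma_i(\tau_1)-\gamma_i(\tau_0)}{\tau_1-\tau_0}$ (or, better, a symmetrized midpoint velocity at $a$); the a priori bounds give $|v^{(N)}_i| \le c(\eps)$ for the good indices, so the phase-space measures $\eta^{(N)} = \frac1N\sum_i \delta_{(x_i,v^{(N)}_i)}$ are $\eps$-tight on a compact subset of $\R^{2n}$, and along a further subsequence $\eta^{(N)} \weakly \eta$. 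Since the flow $\phi^L_t$ is complete (energy conservation plus Gronwall, as noted in the text), the map $\pi_{\R^n}\phi^L_{\cdot-a}: \R^{2n} \to C([a,b];\R^n)$ is continuous, so $\pi_{\R^n}\phi^L_{\cdot-a}\#\eta^{(N)} \weakly \pi_{\R^n}\phi^L_{\cdot-a}\#\eta =: \tilde\pi$, which is by construction supported on $\Gamma_{K_a,K_b}$ (it is a measure on genuine extremals; the endpoint condition $\gamma(b)\in K_b$ follows since $\gamma_i(b)\in K_b$ for all $i$).

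The remaining, and I expect main, step is to show $\pi = \tilde\pi$, i.e.\ that the piecewise-affine discrete trajectories converge to the exact extremals with the same initial data, uniformly over the good particles. This is a consistency/convergence estimate for the midpoint variational integrator: one shows that if $\bar\gamma$ solves the continuous Euler-Lagrange equation with $(\bar\gamma(a),\dot{\bar\gamma}(a)) = (x^{(N)}_i, v^{(N)}_i)$ and $\gamma^{(N)}_i$ is the discrete solution with the matching (discrete) initial data, then $d_\Gamma(\gamma^{(N)}_i, \bar\gamma) \to 0$ as $h_N\to0$, uniformly in $i$ and $N$ on the good set — this uses only that $\nabla V$ is Lipschitz on the fixed compact set $K$ (discrete Gronwall on the one-step error of the symplectic midpoint scheme, which is second-order consistent), and that the initial data range over a compact subset of $\R^{2n}$. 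Granting this, $d_{\rm BL}(\pi^{(N)}, \pi_{\R^n}\phi^L_{\cdot-a}\#\eta^{(N)}) \to 0$ up to the $\eps$-mass of bad particles, so passing $N\to\infty$ and then $\eps\to0$ yields $\pi = \tilde\pi$, and all assertions follow. The subtlety to watch is that the integrator error bound must be uniform over the (compact but $\eps$-dependent) set of initial conditions and over the shrinking step sizes simultaneously; since $K$ and $c(\eps)$ are fixed once $\eps$ is fixed, standard one-step-error times discrete-Gronwall estimates suffice, and the dependence on $\eps$ is harmless because $\eps$ is sent to $0$ only after $N\to\infty$.
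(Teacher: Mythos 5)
Your overall route is the paper's: the a priori bound (the analogue of Lemma \ref{lemma:bounded-traj}, i.e.\ Lemma \ref{lemma:bounded-traj-neu}) gives the $\eps$-smallness of the set of ``bad'' trajectories, the empirical measures are lifted to phase space through the discrete initial velocities $v_i = \frac{\gamma_i(\tau_1)-\gamma_i(a)}{\tau_1-a}$, and the identification $\pi=\pi_{\R^n}\phi^L_{\cdot-a}\#\eta$ rests on a Gronwall-type consistency estimate for the midpoint integrator with matching initial data; that estimate is exactly Lemma \ref{lemma:Stat-Ah-A}, proved by telescoping the discrete Euler--Lagrange equations and applying Lemma \ref{lemma:Ah-approx} with $\nabla V$ in place of $V$.

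There is, however, one step that fails as written: the claim that ``the a priori bounds give $|v^{(N)}_i|\le c(\eps)$ for the good indices.'' The a priori bounds give $\int_a^{b'}|\dot\gamma_i|^2\,dt\le c$ and $\sup_{[a,b']}|\gamma_i|\le R$, and since the discrete velocity is constant on the first subinterval this only yields $|v_i|\le \bigl(c/(\tau_1-\tau_0)\bigr)^{1/2}=O(h_N^{-1/2})$, which is useless. A pointwise bound on $v_i$ needs stationarity in an essential way: either telescope the discrete Euler--Lagrange equations to see that on $[a,b']$ the discrete velocity varies by at most $\frac{b'-a}{m}\sup_{K}|\nabla V|+O(h_N)$, so that the $L^2$ bound (which guarantees at least one time with $|\dot\gamma_i|$ bounded) upgrades to a bound on $v_i$; or, as the paper does, first prove the consistency estimate of Lemma \ref{lemma:Stat-Ah-A} on $[a,b']$ (whose hypotheses do not involve $v_i$), transfer the action bound to the continuous extremal with the same initial data, and invoke Lemma \ref{lemma:stationary-prep}. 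Relatedly, your Gronwall step invokes Lipschitz continuity of $\nabla V$ only ``on the fixed compact set $K$'' obtained from the $[a,b']$ bounds, but on $[b',b]$ there is no a priori confinement of the discrete trajectories; you need the continuation argument that is built into the proof of Lemma \ref{lemma:Stat-Ah-A} (the stopping time $t^*$): the continuous extremals emanating from $\{(x,v): x\in K_a,\ |v|\le c(\eps)\}$ stay in a compact set on all of $[a,b]$ by completeness of the flow, and as long as the discrete trajectory remains within distance $1$ of the continuous one the error stays $O(h_N)$, so for $h_N$ small it never escapes. With these two points repaired, your argument coincides with the paper's proof.
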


\subsubsection*{Proofs of Theorems \ref{theo:Gamma-convergence-Numerics}, \ref{theo:compactness-Numerics} and \ref{theo:stationary-convergence-Numerics}}

Similarly as in Lemma \ref{lemma:bounded-traj} we have:
\begin{lemma}\label{lemma:bounded-traj-neu}
If $V$ is bounded or if $b - a \le \sqrt{\frac{m}{32 c_2}}$, then there exists a constant $c = c(m, c_2, R)$ such that for all $\gamma \in C^{ac}([a, b]; \R^n)$ with $|\gamma(a)| \le R$  
$$ \int_a^b |\dot{\gamma}(t)|^2 \, dt \le c ( 1 + {\cal A}^{[h]}(\gamma) ) 
   \quad \text{ and } \quad 
   |\gamma(t)| \le {\cal A}^{[h]}(\gamma) + c \quad \forall \, t \in [a, b]. $$
\end{lemma}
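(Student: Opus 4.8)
The plan is to follow the proof of Lemma \ref{lemma:bounded-traj} (with reference point $x_0 = 0$), the only genuinely new feature being that the potential energy now enters through the midpoint quadrature sum $\sum_{j=1}^l V\big(\gamma(m_j)\big)(\tau_j - \tau_{j-1})$, $m_j := (\tau_j + \tau_{j-1})/2$, rather than through the integral $\int_a^b V(\gamma(t))\,dt$. Since $\gamma$ is a genuine curve on $[a,b]$ (piecewise affine subordinate to ${\cal T}_h$), each midpoint time $m_j$ lies in $[a,b]$, so the elementary estimate
$$ |\gamma(m_j)| \le |\gamma(a)| + \int_a^{m_j} |\dot\gamma(s)|\,ds \le R + \sqrt{b-a}\,\Big( \int_a^b |\dot\gamma(s)|^2\,ds \Big)^{1/2} $$
holds for every $j$, and hence $|\gamma(m_j)|^2 \le 2R^2 + 2(b-a)\int_a^b|\dot\gamma|^2$. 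Using $|V(x)| \le c_2(1+|x|^2)$ together with $\sum_j(\tau_j - \tau_{j-1}) = b-a$, this yields
$$ \sum_{j=1}^l |V(\gamma(m_j))|\,(\tau_j - \tau_{j-1}) \le c_2(b-a)(1 + 2R^2) + 2c_2(b-a)^2 \int_a^b |\dot\gamma|^2\,dt. $$

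First I would insert this into the definition of ${\cal A}^{[h]}$ to get
$$ {\cal A}^{[h]}(\gamma) \ge \Big( \tfrac{m}{2} - 2c_2(b-a)^2 \Big) \int_a^b |\dot\gamma|^2\,dt - c_2(b-a)(1+2R^2). $$
For $b - a \le \sqrt{m/(32 c_2)}$ the prefactor is at least $\tfrac{7m}{16} \ge \tfrac{m}{4}$, so rearranging gives $\int_a^b |\dot\gamma|^2 \le \tfrac{4}{m}{\cal A}^{[h]}(\gamma) + \tfrac{4}{m}c_2(b-a)(1+2R^2)$, which is the first asserted estimate; when $V$ is bounded one argues the same way, bounding the quadrature sum simply by $\|V\|_\infty(b-a)$, and no restriction on $b-a$ is needed. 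In particular ${\cal A}^{[h]}(\gamma) \ge -c_2(b-a)(1+2R^2) =: -c'$, so $u := {\cal A}^{[h]}(\gamma) + c' \ge 0$ and $\int_a^b|\dot\gamma|^2 \le \tfrac{4}{m}u$.

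For the pointwise bound I would then estimate, for any $t \in [a,b]$,
$$ |\gamma(t)| \le R + \sqrt{b-a}\,\Big( \int_a^b |\dot\gamma|^2 \Big)^{1/2} \le R + 2\sqrt{\tfrac{b-a}{m}}\,\sqrt{u}, $$
and conclude with Young's inequality $2\sqrt{\alpha}\sqrt{u} \le u + \alpha$ applied with $\alpha = (b-a)/m$, which converts the square-root term into $u + (b-a)/m = {\cal A}^{[h]}(\gamma) + c' + (b-a)/m$, keeping the coefficient of ${\cal A}^{[h]}(\gamma)$ exactly equal to one; taking $c$ to be the larger of the two constants produced above finishes the proof. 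I do not expect a real obstacle here: this is a routine variant of Lemma \ref{lemma:bounded-traj}, and the only points needing a little care are (a) that the midpoint values $\gamma(m_j)$ must be controlled at interior times, which is immediate since $\gamma \in C^{ac}([a,b];\R^n)$, and (b) that the constant in front of ${\cal A}^{[h]}(\gamma)$ in the second estimate must remain $1$, which is precisely what forces the use of Young's inequality with the weight $(b-a)/m$ rather than a crude arithmetic–geometric mean bound.
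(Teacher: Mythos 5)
Your proof is correct and reaches the same estimates; the interesting difference is how the midpoint quadrature of $V$ is handled. The paper introduces an auxiliary piecewise-affine curve $\tilde{\gamma}$ that is constant equal to $\gamma\bigl(\tfrac{\tau_{j-1}+\tau_j}{2}\bigr)$ on the middle half of each subinterval and agrees with $\gamma$ at the nodes; this turns the quadrature sum into a genuine integral $2\int_a^b V(\tilde{\gamma})\,dt$ and lets one invoke, verbatim, the estimates of Lemma \ref{lemma:bounded-traj} applied to $\tilde{\gamma}$ (with the bookkeeping facts $\int|\dot{\tilde{\gamma}}|^2 = 2\int|\dot{\gamma}|^2$ and $\int|\tilde{\gamma}|^2 \le 2(b-a)R^2 + 2(b-a)^2\int|\dot{\gamma}|^2$). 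You bypass the auxiliary curve entirely: since $\gamma$ is piecewise affine, the nodal midpoint $\tfrac{\gamma(\tau_{j-1})+\gamma(\tau_j)}{2}$ \emph{is} a value of $\gamma$ at an interior time, so the elementary bound $|\gamma(m_j)|^2 \le 2R^2 + 2(b-a)\int_a^b|\dot{\gamma}|^2$ holds for every $j$ and can be summed directly against $\sum_j(\tau_j - \tau_{j-1}) = b-a$. This is a genuine (if modest) simplification: your prefactor in front of $\int|\dot{\gamma}|^2$ coming from the potential is $2c_2(b-a)^2$ rather than the $4c_2(b-a)^2$ the $\tilde{\gamma}$-construction produces, so the stated non-sharp threshold $b-a\le\sqrt{m/(32c_2)}$ has even more slack. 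Your use of Young's inequality to keep the coefficient of ${\cal A}^{[h]}(\gamma)$ in the pointwise bound equal to one is exactly what the paper's ``possibly after enlarging $c$'' is alluding to, and is the right way to make that step explicit.
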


\begin{proof} 
If $V$ is bounded by $|V| \le C$, say, we clearly have 
$$ {\cal A}^{[h]}(\gamma) 
   \ge \frac{m}{2} \int_a^b |\dot{\gamma}(t)|^2 \, dt - C(b - a). $$

In case $V$ is unbounded, we define $\tilde{\gamma} : [a, b] \to \R^n$ by $\tilde{\gamma}(0) = \gamma(a)$, 
$$ \tilde{\gamma}(\tau_j) = \gamma(\tau_j), \quad 
   \tilde{\gamma}\left( \frac{3\tau_{j-1} + \tau_j}{4} \right) 
   = \tilde{\gamma}\left( \frac{\tau_{j-1} + 3\tau_j}{4} \right) 
   = \gamma\left(\frac{\tau_{j-1} + \tau_j}{2}\right) $$ 
for $j = 1, \ldots, l$ and affine interpolation. As in Lemma \ref{lemma:bounded-traj} we then find 
\begin{align*} 
  \int_a^b |\tilde{\gamma}(t)|^2 \, dt 
  \le 2 (b - a) R^2 + 2 (b - a)^2 \int_a^b |\dot{\gamma}(s)|^2 \, ds. 
\end{align*}
Now by construction $\tilde{\gamma}$ satisfies 
\begin{align*}
  \sum_{j = 1}^{l} V \left( \gamma \left(\frac{\tau_j + \tau_{j-1})}{2} \right) \right) (\tau_j - \tau_{j - 1}) 
  &= 2 \sum_{j = 1}^{l} \int_{\frac{3\tau_{j-1} + \tau_j}{4}}^{\frac{\tau_{j-1} + 3\tau_j}{4}} V(\tilde{\gamma}(t)) \, dt \\ 
  &\le 2 c_2 \int_a^b (1 + |\tilde{\gamma}(t)|^2) \, dt 
\end{align*}
as well as 
$$ \int_a^b |\dot{\gamma}(t)|^2 \, dt = \frac{1}{2} \int_a^b |\dot{\tilde{\gamma}}(t)|^2 \, dt. $$ 
So \begin{align*}
  {\cal A}^{[h]}(\gamma) 
  &\ge \frac{m}{4} \int_a^b |\dot{\tilde{\gamma}}(t)|^2 \, dt - 2 c_2 (b - a) - 2 c_2 \int_a^b |\tilde{\gamma}(t)|^2 \, dt \\ 
  &\ge \left( \frac{m}{4} - 4 c_2 (b - a)^2 \right) \int_a^b |\dot{\gamma}(t)|^2 \, dt - 2 c_2 (b - a) (1 + 2 R^2) \\ 
  &\ge \frac{m}{8}\int_a^b |\dot{\tilde{\gamma}}(t)|^2 \, dt - \sqrt{m c_2} (1 + R^2) \\ 
  &= \frac{m}{4}\int_a^b |\dot{\gamma}(t)|^2 \, dt - \sqrt{m c_2} (1 + R^2). 
\end{align*} 

This proves the first estimate. The second estimate then follows, possibly after enlarging $c$, precisely as at the end of the proof of Lemma \ref{lemma:bounded-traj}. \end{proof} 

The following lemma provides a well-known basic error estimate for piecewise affine interpolations.
\begin{lemma}\label{lemma:Ah-approx}
For every piecewise affine $\gamma$ subordinate to ${\cal T}_h$, 
\begin{align*}
  &\left| \int_a^b V(\gamma(t)) \, dt - \sum_{j = 1}^{l} V \left( \gamma \left(\frac{\tau_j + \tau_{j-1}}{2} \right) \right) (\tau_j - \tau_{j - 1})  \right| \\ 
  &\le h^2 \sup_{a \le t \le b} |\nabla^2 V(\gamma(t))| \int_a^b |\dot{\gamma}(t)|^2 \, dt. 
\end{align*}
\end{lemma}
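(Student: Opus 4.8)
The plan is to prove this as a standard piecewise-polynomial interpolation error estimate applied on each subinterval and then summed. Fix $j \in \{1, \dots, l\}$ and write $I_j = (\tau_{j-1}, \tau_j)$, $h_j = \tau_j - \tau_{j-1} \le h$, and $m_j = \frac{\tau_j + \tau_{j-1}}{2}$. On $I_j$ the curve $\gamma$ is affine, hence $t \mapsto V(\gamma(t))$ is a smooth function on $I_j$ whose midpoint-rule quadrature error I want to control. The midpoint rule error for a $C^2$ function $g$ on an interval of length $h_j$ is $\int_{I_j} g - h_j g(m_j) = \frac{1}{24} h_j^3 g''(\xi_j)$ for some $\xi_j \in I_j$ (or, avoiding the mean value form, $|\int_{I_j} g - h_j g(m_j)| \le \frac{1}{24} h_j^3 \sup_{I_j}|g''|$, which is what I would actually use).

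Next I would compute $g''$ for $g(t) = V(\gamma(t))$ on $I_j$. Since $\gamma$ is affine there, $\dot\gamma$ is constant on $I_j$ and $\ddot\gamma = 0$, so by the chain rule $g''(t) = \dot\gamma(t)^{T}\, \nabla^2 V(\gamma(t))\, \dot\gamma(t)$, whence $|g''(t)| \le |\nabla^2 V(\gamma(t))|\, |\dot\gamma(t)|^2$. Plugging this into the per-interval bound and using $h_j \le h$ gives
\begin{align*}
  \left| \int_{I_j} V(\gamma(t))\,dt - h_j\, V(\gamma(m_j)) \right|
  \le \frac{h_j^3}{24} \sup_{I_j} |\nabla^2 V(\gamma)| \cdot |\dot\gamma|_{I_j}^2
  \le h^2 \sup_{I_j} |\nabla^2 V(\gamma)| \int_{I_j} |\dot\gamma(t)|^2\,dt,
\end{align*}
where in the last step I used $h_j^2 |\dot\gamma|_{I_j}^2 = h_j \int_{I_j}|\dot\gamma(t)|^2\,dt$ (valid because $\dot\gamma$ is constant on $I_j$) together with $h_j^3/24 \le h^2 h_j$. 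Note the stated inequality has constant $1$ rather than $1/24$, so this cruder bookkeeping is more than enough; one could even avoid the sharp midpoint constant and argue directly from a Taylor expansion of $g$ about $m_j$, the odd-order term integrating to zero.

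Finally I would sum over $j = 1, \dots, l$: the left-hand side telescopes to $\left|\int_a^b V(\gamma(t))\,dt - \sum_{j=1}^l V(\gamma(m_j))(\tau_j - \tau_{j-1})\right|$, and on the right-hand side I bound each factor $\sup_{I_j}|\nabla^2 V(\gamma)|$ by the global $\sup_{a \le t \le b}|\nabla^2 V(\gamma(t))|$, pull it out of the sum, and use $\sum_j \int_{I_j}|\dot\gamma(t)|^2\,dt = \int_a^b |\dot\gamma(t)|^2\,dt$. This yields exactly the claimed estimate. There is no real obstacle here; the only mild point of care is justifying the per-interval quadrature error cleanly — I would phrase it via Taylor's theorem with integral remainder about the midpoint so that $V \in C^\infty$ and $\gamma$ affine on $I_j$ make $g \in C^2(\overline{I_j})$ and the bound transparent, and so that the argument does not secretly need more regularity than is available at the breakpoints $\tau_j$ (it does not, since everything is done on the open subintervals).
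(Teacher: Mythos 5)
Your proposal is correct and is essentially the paper's own argument: a Taylor expansion (midpoint-rule error with the $1/24$ constant) on each subinterval where $\gamma$ is affine, the identification $g''(t) = \dot\gamma^T(t)\nabla^2 V(\gamma(t))\dot\gamma(t)$, the conversion $h_j|\dot\gamma|^2 = \int_{I_j}|\dot\gamma(t)|^2\,dt$ using that $\dot\gamma$ is constant there, and summation over $j$. No gaps; the only cosmetic difference is that the paper writes the two bounding steps in a single chain rather than spelling out the chain-rule computation separately.
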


\begin{proof} 
 This immediately follows from a Taylor expansion on every subinterval $(\tau_{j-1}, \tau_j)$, on which $\gamma$ is affine: 
\begin{align*}
  &\left| \int_{\tau_{j-1}}^{\tau_j} V(\gamma(t)) \, dt - V \left( \gamma \left( \frac{\tau_{j-1} + \tau_j}{2} \right) \right) (\tau_j - \tau_{j-1}) \right| \\ 
  &\le \frac{|\tau_j - \tau_{j-1}|^3}{24} \sup_{\tau_{j-1} \le t \le \tau_j} |\dot{\gamma}^T(t) \nabla^2 V(\gamma(t)) \dot{\gamma}(t)| \\ 
  &\le h^2 \sup_{\tau_{j-1} \le t \le \tau_j} |\nabla^2 V(\gamma(t))| \int_{\tau_{j-1}}^{\tau_j} |\dot{\gamma}(t)|^2 \, dt. \qedhere
\end{align*}
\end{proof} 

\begin{proof}[Proof of Theorem \ref{theo:Gamma-convergence-Numerics}] 
(i) We define ${\cal A}^R$ as in the proof of Theorem \ref{theo:Gamma-convergence-Eukl} and for given $k > 0$ choose $R$ so large that by Lemma \ref{lemma:bounded-traj-neu} 
$$ {\cal A}^{[h_N]}(\gamma) \le k 
   \implies |\gamma(t)| \le k + c \le R 
   \implies {\cal A}^R(\gamma) = {\cal A}(\gamma). $$ 
By Lemmas \ref{lemma:bounded-traj-neu} and \ref{lemma:Ah-approx} we obtain that for those $\gamma$ in addition 
$$ | {\cal A}^{[h_N]}(\gamma) - {\cal A}(\gamma) | 
   \le h^2 c(1 + k) \sup_{|x| \le k + c} |\nabla^2 V(x)|. $$
But then, if $\pi^{(N)}$ converges to $\pi$ in ${\cal P}(C([a, b]; M))$ weakly with uniformly bounded action, so that $\pr_t \# \pi = \mu_t$ for $t \in \{a, b\}$,  
\begin{align*}
  \int {\cal A}^{[h_N]} (\gamma) \, d \pi^N 
  \ge \int \min\{k, {\cal A}^{[h_N]} (\gamma)\} \, d \pi^N 
  \ge \int \left( \min\{k, {\cal A}^R (\gamma)\} - C h_N^2 \right) \, d \pi^N 
\end{align*}
for a constant $C = C(k)$. Letting first $N \to \infty$, $h_N \to 0$ and then $k \to \infty$, by the portmanteau theorem and monotone convergence we thus arrive at 
$$ \liminf_{N \to \infty} \int {\cal A}^{[h_N]} (\gamma) \, d \pi^N 
   \ge \int {\cal A} (\gamma) \, d \pi $$ 
as in the proof of Theorem \ref{theo:Gamma-convergence-Eukl}. 

(ii) In order to provide a recovery sequence for $\pi \in {\cal P}(C([a, b]; \R^n))$ we only need to observe that replacing the curves $\gamma_x$ constructed in the proof of Theorem \ref{theo:Gamma-convergence} with their piecewise affine interpolation subordinate to ${\cal T}_{h_N}$ only introduces negligible error terms in $d_{\rm BL}(\pi^{(N)}, \pi)$ and in $\mathbb{A}(\pi^{(N)})$ in the limit $h_N \to 0$. Still being supported on a common compact set, this defines a recovery sequence also for unbounded $V$, as shown in the proof of Theorem \ref{theo:Gamma-convergence-Eukl}. \end{proof}

\begin{proof}[Proof of Theorem \ref{theo:compactness-Numerics}]  
As ${\cal A}^{[h_N]}(\gamma) \le C$ by Lemma \ref{lemma:bounded-traj-neu} implies that $|\gamma(t)| \le C + c$ for $\pi^{(N)}$-a.e.\ $\gamma$, ${\cal A}^{[h_N]}$ is bounded from below on the joint support of the $\pi^{(N)}$. As in the proofs of Theorems \ref{theo:compactness} and \ref{theo:compactness-Eukl} we therefore have 
$$ \pi^{(N)} \left( \left\{ \gamma \in C([a, b]; \R^n) : {\cal A}^{[h_N]}(\gamma) \ge \frac{\tilde{C}}{\eps} \right\} \right) 
   \le \eps. $$ 
for some constant $\tilde{C} > 0$. As at the beginning of the proof of Theorem \ref{theo:Gamma-convergence-Numerics}, we see that Lemma \ref{lemma:bounded-traj-neu} implies $|{\cal A}^{[h]}(\gamma) - {\cal A}^R(\gamma)| \le 1$ for ${\cal A}^{[h]}(\gamma) \le \frac{\tilde{C}}{\eps}$ and sufficiently small $h$ and large $R$, so that Lemma \ref{lemma:A-coercive} yields a compact set $K_{\eps}$ with 
\begin{align*}
  \left\{ \gamma \in C([a, b]; \R^n) : {\cal A}^{[h_N]}(\gamma) \le \frac{\tilde{C}}{\eps} \right\} 
  &\subset \left\{ \gamma \in C([a, b]; \R^n) : {\cal A}^{R}(\gamma) \le \frac{\tilde{C}}{\eps} + 1 \right\} \\  
  &\subset K_{\eps} 
\end{align*}
and $\pi^{(N)}(K_{\eps}) \ge \pi^{(N)}(\{ \gamma \in C([a, b]; R) : {\cal A}^{[h_N]}(\gamma) \le \frac{\tilde{C}}{\eps}\}) \ge 1 - \eps$. 
\end{proof} 

We finally turn to the proof of Theorem \ref{theo:stationary-convergence-Numerics}. If $\gamma$ is a stationary point of ${\cal A}^{[h]}$, then it satisfies the discrete Euler-Lagrange equations 
\begin{align*}
  &m \frac{\gamma(\tau_j) - \gamma(\tau_{j-1})}{\tau_j - \tau_{j-1}} 
    - m \frac{\gamma(\tau_{j+1}) - \gamma(\tau_j)}{\tau_{j+1} - \tau_j} \\  
  &=\frac{\tau_j - \tau_{j-1}}{2} \nabla V \left( \frac{\gamma(\tau_{j-1}) + \gamma(\tau_j)}{2} \right) 
    + \frac{\tau_{j+1} - \tau_j}{2} \nabla V \left( \frac{\gamma(\tau_j) + \gamma(\tau_{j+1})}{2} \right), 
\end{align*}
$j = 1, \ldots, l-1$. 

\begin{lemma}\label{lemma:Stat-Ah-A}
Suppose $\gamma$ is a stationary point of ${\cal A}^{[h]}$ with $|\gamma(t)| \le R$ for all $t \in [a, b]$ and ${\cal A}^{[h]} \le C$. Let $\tilde{\gamma}$ be the extremal of ${\cal A}$ with $\tilde{\gamma}(a) = \gamma(a)$ and $\dot{\tilde{\gamma}}(a) = \frac{\gamma(a+h) - \gamma(a)}{h}$. Then there exists a constant $c = c(R, C)$ such that 
$$ |\gamma(t) - \tilde{\gamma}(t)| + |\dot{\gamma}(t) - \dot{\tilde{\gamma}}(t)| \le c h $$ 
for all $t \in [a, b]$. 
\end{lemma}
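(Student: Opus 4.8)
The plan is to compare the discrete Euler--Lagrange trajectory $\gamma$ with the genuine extremal $\tilde\gamma$ by a Gronwall-type argument on the difference, after first establishing that both curves stay in a fixed compact set with bounded velocity, so that $V$ and its derivatives are uniformly bounded and Lipschitz along them. The hypothesis $|\gamma(t)|\le R$ and ${\cal A}^{[h]}(\gamma)\le C$ already give, via Lemma~\ref{lemma:bounded-traj-neu}, a bound $\int_a^b|\dot\gamma|^2\le c(1+C)$; combined with the discrete Euler--Lagrange equations this bounds the one-sided difference quotients $\frac{\gamma(\tau_j)-\gamma(\tau_{j-1})}{\tau_j-\tau_{j-1}}$ uniformly (the left-hand side of the discrete EL equation is a controlled increment of a bounded-by-$h$ right-hand side), hence $|\dot\gamma|\le c$ pointwise on each subinterval. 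Since $\dot{\tilde\gamma}(a)$ equals one such difference quotient, the conserved energy $\tfrac m2|\dot{\tilde\gamma}|^2+V(\tilde\gamma)$ together with $|V(x)|\le c_2(1+|x|^2)$ and Gronwall (as noted in the paper, this is exactly what makes $\phi^L$ complete) keeps $\tilde\gamma$ and $\dot{\tilde\gamma}$ bounded on $[a,b]$ as well. So both curves live in a compact set $K'$ on which $|\nabla V|,|\nabla^2 V|\le M$ for a constant $M=M(R,C)$.

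Next I would interpret the discrete EL equations as a perturbed (forced) version of Newton's equation $m\ddot x=-\nabla V(x)$. Writing $w(t)=\gamma(t)-\tilde\gamma(t)$ and $\dot w$ where these are defined (on subinterval interiors for $\gamma$, which is piecewise affine), the point is that $\gamma$ satisfies $m\ddot\gamma=0$ on each open subinterval, with jumps in $\dot\gamma$ across the nodes $\tau_j$ that by the discrete EL equation equal $-\tfrac{h_j+h_{j+1}}{2}\nabla V(\cdot)+O(h^2)$-type quantities; summing these jumps against the midpoint rule, one sees that $\gamma$ is, up to an $O(h)$ error in velocity accumulated over $O(1/h)$ nodes each contributing $O(h^2)$, a solution of $m\ddot x=-\nabla V(x)$ with the same initial data as $\tilde\gamma$. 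More precisely, I would integrate: for $t\in[a,b]$,
$$ m\dot\gamma(t) = m\dot\gamma(a^+) - \int_a^t \nabla V(\gamma(s))\,ds + E(t), \qquad |E(t)|\le c h, $$
where $E(t)$ collects the midpoint-versus-pointwise discrepancies $\nabla V\big(\tfrac{\gamma(\tau_{j-1})+\gamma(\tau_j)}2\big)$ minus the integral of $\nabla V\circ\gamma$, each of size $O(h^2|\dot\gamma|^2)$ by Taylor expansion (cf.\ the estimate in Lemma~\ref{lemma:Ah-approx}), together with the fact that $\gamma(\tau_j)$-increments approximate the integral of $\dot\gamma$. Since $\dot\gamma(a^+)=\dot{\tilde\gamma}(a)$ by the choice of $\tilde\gamma$, subtracting the exact integral identity $m\dot{\tilde\gamma}(t)=m\dot{\tilde\gamma}(a)-\int_a^t\nabla V(\tilde\gamma(s))\,ds$ yields
$$ m|\dot w(t)| \le \int_a^t |\nabla V(\gamma(s))-\nabla V(\tilde\gamma(s))|\,ds + c h \le M\int_a^t |w(s)|\,ds + c h, $$
and combining with $|w(t)|\le\int_a^t|\dot w(s)|\,ds$ gives a closed Gronwall inequality for $|w(t)|+|\dot w(t)|$ with zero initial value and inhomogeneity $ch$. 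Gronwall's lemma on $[a,b]$ then delivers $|w(t)|+|\dot w(t)|\le c'h$ with $c'=c'(R,C,b-a)$, which is the assertion.

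The main obstacle I anticipate is bookkeeping the error term $E(t)$ cleanly, because $\gamma$ is only piecewise affine (so $\dot\gamma$ has jumps and $\ddot\gamma$ is a sum of Dirac masses at the nodes), and one must show that the accumulated effect of replacing the discrete forcing $\sum_j\tfrac{h_j+h_{j+1}}2\nabla V(\text{midpoint})$ by $\int\nabla V(\gamma(s))\,ds$ is genuinely $O(h)$ and not $O(1)$ — i.e.\ that the $O(h^2)$ local errors, of which there are $O(1/h)$, sum to $O(h)$. This requires the uniform velocity bound on $\gamma$ established in the first step (so that each local Taylor remainder is $O(h^2)$ with a constant independent of $j$) and a careful telescoping of the nodal increments; the boundary subintervals and the one-sided derivative $\dot\gamma(a^+)$ need slightly separate attention. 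Everything else — the compactness/boundedness preliminaries and the final Gronwall step — is routine once this discrete-to-continuous forcing comparison is in place.
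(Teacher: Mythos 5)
Your proposal is correct and follows essentially the same route as the paper: telescope the discrete Euler--Lagrange equations into an integrated Newton equation $m\dot{\gamma}(t) = m\dot{\gamma}(a^+) - \int_a^t \nabla V(\gamma(s))\,ds + O(h)$, with the error controlled by the midpoint-rule Taylor estimate (Lemma \ref{lemma:Ah-approx} applied to $\nabla V$) together with the $L^2$ velocity bound from Lemma \ref{lemma:bounded-traj-neu}, then subtract the exact integral identity for $\tilde{\gamma}$ and close with Gronwall on $|w|+|\dot w|$. The only real difference is how the local Lipschitzness of $\nabla V$ is justified: you confine $\tilde{\gamma}$ to a compact set a priori via energy conservation (which is why you first need the pointwise bound on the discrete difference quotients), whereas the paper simply runs the Gronwall estimate up to the stopping time $t^* = \min\{\min\{s : |\tilde{\gamma}(s)| \ge R+1\}, b\}$ and bootstraps $t^* = b$ for small $h$; both variants work.
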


\begin{proof}
If $t \in (\tau_{j-1}, \tau_j)$, we have 
\begin{align*}
  \dot{\gamma}(t) 
  &= \frac{\gamma(\tau_1) - \gamma(\tau_0)}{\tau_1 - \tau_0} 
     + \sum_{i = 1}^{j - 1} \left( \frac{\gamma(\tau_{i+1}) - \gamma(\tau_i)}{\tau_{i + 1} - \tau_i} - \frac{\gamma(\tau_i) - \gamma(\tau_{i-1})}{\tau_i - \tau_{i - 1}} \right) \\ 
  &= \frac{\gamma(\tau_1) - \gamma(a)}{\tau_1 - a} 
   - \frac{1}{2m} \sum_{i = 1}^{j - 1} 
   \bigg( \nabla V \left( \frac{\gamma(\tau_{i+1}) + \gamma(\tau_i)}{2} \right) (\tau_{i+1} - \tau_i) \\ 
  &\qquad\qquad\qquad\qquad\qquad
   + \nabla V \left( \frac{\gamma(\tau_i) + \gamma(\tau_{i-1})}{2} \right) (\tau_i - \tau_{i-1})\bigg) \\ 
  &= \frac{\gamma(\tau_1) - \gamma(a)}{\tau_1 - a} 
   - \frac{1}{m} \int_a^t\nabla V(\gamma(s)) \, ds + O(h)
\end{align*}
by Lemma \ref{lemma:Ah-approx} with $V$ replaced by $\nabla V$. Note that by Lemma \ref{lemma:bounded-traj-neu}, the error term only depends on $C$ and $R$. Since $\tilde{\gamma}$ solves the continuous Euler-Lagrange equation, we also have 
\begin{align*}
  \dot{\tilde{\gamma}}(t) 
  = \dot{\tilde{\gamma}}(a) - \frac{1}{m} \int_a^t\nabla V(\tilde{\gamma}(s)) \, ds. 
\end{align*}
Then $\dot{\tilde{\gamma}}(a) = \frac{\gamma(a+h) - \gamma(a)}{h}$ and $\tilde{\gamma}(a) = \gamma(a)$ imply 
\begin{align*}
  |\dot{\gamma}(t) - \dot{\tilde{\gamma}}(t)| 
  &\le \frac{1}{m} \int_a^t |\nabla V(\gamma(s)) - \nabla V(\tilde{\gamma}(s))| \, ds + \tilde{C} h \\ 
  &\le \tilde{C} \int_a^t |\gamma(s) - \tilde{\gamma}(s)| \, ds + \tilde{C} h \\
  &\le \tilde{C} \int_a^t \int_a^s |\dot{\gamma}(r) - \dot{\tilde{\gamma}}(r)| \, dr \, ds + \tilde{C} h \\ 
  &= \tilde{C} \int_a^t (t - r) |\dot{\gamma}(r) - \dot{\tilde{\gamma}}(r)| \, dr + \tilde{C} h \\ 
  &\le \tilde{C} \int_a^t |\dot{\tilde{\gamma}}(r) - \dot{\gamma}(r)| \, dr + \tilde{C} h  
\end{align*}
for some $\tilde{C} > 0$ and all $a \le t \le t^* := \min\{\min\{s : |\tilde{\gamma}(s)| \ge R+1\}, b\}$. Gronwall's inequality now yields $|\dot{\tilde{\gamma}}(t) - \dot{\gamma}(t)| \le \tilde{C} h {\rm e}^{\tilde{C}(t - a)}$ and because of $\tilde{\gamma}(a) = \gamma(a)$ for suitable $c > 0$ we thus have
\[ |\dot{\tilde{\gamma}}(t) - \dot{\gamma}(t)| + |\tilde{\gamma}(t) - \gamma(t)| 
   \le c h \]
for these $t$. But then $|\tilde{\gamma}(t^*)| \le |\gamma(t^*)| + c h < R + 1$ for sufficiently small $h$ and so in fact $t^* = b$. 
\end{proof}

\begin{proof}[Proof of Theorem \ref{theo:stationary-convergence-Numerics}] 
As in the proof of Theorem \ref{theo:compactness-Numerics} we find 
\begin{align}\label{eq:AhN-Absch}
  \pi^{(N)} \left( \left\{ \gamma \in C([a, b]; \R^n) : {\cal A}^{[h_N]}(\gamma|_{[a, b']}) \ge c_{\eps} \right\} \right) 
  \le \eps 
\end{align} 
for given $\eps > 0$ and suitable $c_{\eps} > 0$. Consider the discrete Euler-Lagrange mapping $\phi^{L,h}_{\cdot - a}$, where $\phi^{L, h}_{\cdot - a} : \R^{2n} \to \R^{2n}$ maps $(x, v)$ to the solution of the discrete Euler-Lagrange equation $\gamma$ with $\gamma(a) = x$ and $\frac{\gamma(\tau_1) - \gamma(a)}{\tau_1 - a} = v$, so that with $v_i := \frac{\gamma_i(\tau_1) - \gamma_i(a)}{\tau_1 - a}$ for $\gamma^{(N)} = (\gamma_1, \ldots, \gamma_N)$ we can write 
$$ \gamma_i = \phi^{L, h_N}_{\cdot - a}(x_i, v_i) $$ 
and the measures 
$$ \eta^{(N)} 
   = \frac{1}{N} \sum_{i = 1}^N \delta_{(x_i, v_i)} $$
on $\R^{2n}$ satisfy $\pi^{(N)} = \pi_{\R^n} \phi^{L, h_N}_{\cdot - a} \# \eta^{(N)}$. By Lemmas \ref{lemma:stationary-prep} and \ref{lemma:Stat-Ah-A} (applied on the time interval $[a, b']$) there exists $c = c(\eps) > 0$ such that $|v_i| \le c$ for all $i$ with ${\cal A}^{[h_N]}(\gamma_i|_{[a, b']}) \ge c_{\eps}$, so that 
$$ \eta^{(N)}(\{(x, v) \in \R^{2n} : x \in K_a,~ |v| \le c\}) 
   \ge 1 - \eps $$ 
by \eqref{eq:AhN-Absch}. The sequence $\eta^{(N)}$ thus being tight, there exists a subsequence (not relabeled) such that $\eta^{(N)} \weakly \eta$. Noting that Lemma \ref{lemma:Stat-Ah-A} implies that $\phi^{L,h}_{\cdot - a} \to \phi^L_{\cdot - a}$ uniformly on compact subsets of $\R^{2n}$, we finally obtain that $\pi_{\R^n} \phi^{L,h}_{\cdot - a} \# \eta^{(N)} \weakly \pi_{\R^n} \phi^L_{\cdot - a} \# \eta$ and hence $\pi = \pi_\R^n \phi^L_{\cdot - a} \# \eta$. 
\end{proof}

\section*{Acknowledgments}

I am grateful to Michael Ortiz for pointing out this problem to me and for various interesting discussions on the subject.

 \typeout{References}


\begin{thebibliography}{10}


\bibitem{ArroyoOrtiz}
{\sc M.~Arroyo, M.~Ortiz}. 
\newblock {\em Local maximum-entropy approximation schemes: a seamless bridge between finite elements and meshfree methods}. 
\newblock Internat.\ J.\ Numer.\ Methods Engrg. 
\newblock {\bf 65} (2006) 2167 -- 2202. 

\bibitem{BenamouBrenier:99}
{\sc J.-D.~Benamou, Y.~Brenier}. 
\newblock {\em A numerical method for the optimal time-continuous mass transport and related problems}. In: {\em Monge-Amp{\`e}re Equation: Applications to Geometry and Optimization}. 
\newblock AMS, Providence, RI, 1999, 
\newblock 1 -- 11. 

\bibitem{BenamouBrenier:00}
{\sc J.-D.~Benamou, Y.~Brenier}. 
\newblock {\em A computational fluid mechanics solution to the Monge-Kantorovich mass transfer problem}. 
\newblock Numer.\ Math. 
\newblock {\bf 84} (2000) 375 -- 393. 

\bibitem{BernardBuffoni:07}
{\sc P.~Bernard, B.~Buffoni}. 
\newblock {\em Optimal mass transportation and Mather theory}. 
\newblock J.\ Eur.\ Math.\ Soc.  
\newblock {\bf 9} (2007) 85 -- 121. 

\bibitem{BlancLeBrisLions}
{\sc X.~Blanc, C.~LeBris, P.~L.~Lions}. 
\newblock {\em Atomistic to continuum limits for computational materials science}. 
\newblock Math.\ Model. Numer.\ Anal. 
\newblock {\bf 41} (2007) 391 -- 426. 

\bibitem{BompadreSchmidtOrtiz}
{\sc A.~Bompadre, B.~Schmidt, M.~Ortiz}. 
\newblock {\em  Convergence Analysis of Meshfree Approximation Schemes}. 
\newblock SIAM J.\ Numer.\ Anal. 
\newblock {\bf 50} (2012) 1344 -- 1366. 

\bibitem{Fathi}
{\sc A.~Fathi}.
\newblock {\em Weak KAM theorem in Lagrangian dynamics}.
\newblock Lecture Notes, 2008.  

\bibitem{LiHabbalOrtiz}
{\sc B.~Li, F.~Habbal, M.~Ortiz}. 
\newblock {\em Optimal transportation meshfree approximation schemes for fluid and plastic flows}. 
\newblock Internat.\ J.\ Numer.\ Methods Engrg. 
\newblock {\bf 83} (2010) 1541 -- 1579. 

\bibitem{MarsdenWest}
{\sc J.~E.~Marsden, M.~West}. 
\newblock {\em Discrete Mechanics and variational integrators}. 
\newblock Acta Numer. 
\newblock {\bf 10} (2001) 357 -- 514. 

\bibitem{SchmidtLeyendeckerOrtiz:09}
{\sc B.~Schmidt, S.~Leyendecker, M.~Ortiz}. 
\newblock {\em Gamma-convergence of variational integrators for constrained systems}. 
\newblock J.\ Nonlin.\ Sci.  
\newblock {\bf 19} (2009) 153 -- 177. 

\bibitem{Villani:03}
{\sc C.~Villani}.
\newblock {\em Topics in Optimal Transportation}.
\newblock AMS, Providence, RI, 2003. 

\bibitem{Villani:09}
{\sc C.~Villani}.
\newblock {\em Optimal Transport. Old and New}.
\newblock Springer, Berlin $\cdot$ Heidelberg 2009. 

\end{thebibliography}
\end{document}